\newcommand{\field}[1]{\mathbb{#1}}
\newcommand{\CC}{\field{C}}
\newcommand{\NN}{\field{N}}
\newcommand{\RR}{\field{R}}
\newcommand{\TT}{\field{T}}
\newcommand{\ZZ}{\field{Z}}
\newcommand{\QQ}{\field{Q}}
\newcommand{\Aa}{\mathcal A}
\newcommand{\Ff}{\mathcal F}
\newcommand{\Kk}{\mathcal K}
\newcommand{\Ll}{\mathcal L}
\newcommand{\Qq}{\mathcal Q}
\newcommand{\Tt}{\mathcal T}
\newcommand\3[1]{{\mathds #1}}
\newcommand{\NT}{\operatorname{\mathcal{NT}}}
\newcommand{\Aut}{\operatorname{Aut}}
\newcommand{\id}{\operatorname{id}}
\newcommand{\Ind}{\operatorname{Ind}}
\newcommand{\clsp}{\operatorname{\overline{span\!}\,\,}}
\theoremstyle{plain}
\newtheorem{theorem}{Theorem}[section]
\newtheorem*{theorem*}{Theorem}
\newtheorem*{prop*}{Proposition}
\newtheorem{cor}[theorem]{Corollary}
\newtheorem{lemma}[theorem]{Lemma}
\newtheorem{prop}[theorem]{Proposition}
\newtheorem{rmk}[theorem]{Remark}
\newtheorem{example}[theorem]{Example}
\theoremstyle{definition}
\newtheorem{dfn}[theorem]{Definition}
\numberwithin{equation}{section}
\begin{document}

\title[KMS states on Nica-Toeplitz algebras of product systems]{KMS states on Nica-Toeplitz algebras
\\ of product systems}

\author[J. H. Hong]{Jeong Hee Hong$^{\dag}$}
\address{Department of Data Information, Korea Maritime University,
Busan 606--791, South Korea}
\email{hongjh@hhu.ac.kr}

\author[N. S. Larsen]{Nadia S. Larsen$^{\ddag\flat}$}
\address{Department of Mathematics, University of Oslo, PO Box 1053 Blindern,
N--0316 Oslo, Norway}
\email{nadiasl@math.uio.no}

\author[W. Szyma{\'n}ski]{Wojciech Szyma{\'n}ski$^{\sharp\flat}$}
\address{Department of Mathematics and Computer Science, University of Southern Denmark,
Campusvej 55, DK--5230 Odense M, Denmark}
\email{szymanski@imada.sdu.dk}

\thanks{\hspace{-4mm}{\dag}  J. H. Hong was supported by Basic Science Research
Program through the National Research
Foundation of Korea (NRF) funded by the Ministry of Education, Science and Technology (2010--0022884). \\
{\ddag} N. S. Larsen was supported by the Research Council of Norway. She acknowledges the inspiring
environment of the Banff International  Research Station during a stay in June 2012.\\
{$\sharp$} W. Szyma{\'n}ski was partially supported by the FNU Forskningsprojekt
`Structure and Symmetry' (2010--2012), the FNU Rammebevilling `Operator algebras and
applications' (2009--2011), and a travel grant from Danske Universiteter and the Japanese Society
for the Promotion of Science. \\
{$\flat$} N. S. Larsen and W. Szyma{\'n}ski were also supported by the NordForsk research network
"Operator Algebra and Dynamics" (grant \#11580).}

\begin{abstract}
We investigate  KMS states of Fowler's Nica-Toeplitz algebra $\mathcal{NT}(X)$ associated to
a compactly aligned product system $X$ over a semigroup $P$ of Hilbert bimodules. This analysis relies on restrictions
of these states to the core algebra which satisfy appropriate scaling conditions. The concept of product system of
finite type is introduced. If $(G, P)$ is a lattice ordered group
and $X$ is a product system of finite type over $P$ satisfying certain coherence properties, we construct
KMS$_\beta$ states of $\NT(X)$ associated to a scalar dynamics from traces on the coefficient algebra
of the product system. Our results were motivated by, and generalize some of the results
of Laca and Raeburn obtained for the Toeplitz algebra of the affine semigroup over the natural numbers.
\end{abstract}

\date{February 20, 2012. Revised July 17, 2012.}
\vskip 1cm

\maketitle

\section{Introduction}\label{section:Intro}
KMS$_\beta$ states for a quasi-free dynamics on the Toeplitz algebra associated to a right Hilbert
bimodule over a $C^*$-algebra have been constructed in many contexts by different authors.
 A unified approach that moreover greatly generalized earlier specific constructions was obtained
by Laca and Neshveyev in \cite{Lac-Nes}.

Recently $C^*$-algebras associated with rings and exhibiting an interesting structure of KMS states
have been discovered. In \cite{Cun1}, Cuntz associated $C^*$-algebras to the affine semigroup over
the natural numbers and to the ring of integers, and proved in both cases existence of a single
KMS$_\beta$ state at (inverse) temperature $\beta=1$ for a  natural dynamics.  Cuntz's $C^*$-algebra
$\mathcal{Q}_{\NN}$ of the affine semigroup over the natural numbers is purely infinite and simple.
It arises in a way reminiscent of a boundary construction, and this prompted Laca and Raeburn to find a Toeplitz
algebra $\Tt(\NN\rtimes\NN^\times)$ of the affine semigroup over the natural numbers with a much richer
structure of KMS states for a natural dynamics, \cite{Lac-Rae2}. They proved that
 $(\QQ\rtimes \QQ^*_+, \NN\rtimes\NN^\times)$ is a quasi-lattice ordered group in the sense
of Nica \cite{Ni}, and using results on the associated Nica spectrum established
that indeed $\mathcal{Q}_{\NN}$ is a boundary quotient of $\Tt(\NN\rtimes\NN^\times)$.
 In \cite{BanHLR}, Brownlowe, an Huef, Laca  and Raeburn analyze KMS states of the intermediary subquotients
of $\Tt(\NN\rtimes\NN^\times)$ with help of the techniques developed originally in \cite{Lac-Rae2}.
As recent closely related results we would like to mention
the analysis of KMS states of $\Tt(\NN\rtimes\NN^\times)$ based on a crossed product approach,
carried out by Laca and Neshveyev in \cite{Lac-Nes2}, and the classification of KMS states for Toeplitz-like
$C^*$-algebras associated to the ring
of integers in a number field, worked out by Cuntz, Deninger and Laca in \cite{CDL}.

Our goal in the present paper is to initiate the study of KMS states of the Nica-Toeplitz algebra of a product
system over a semigroup of Hilbert bimodules.  Our motivation  comes from the fact that  $\mathcal{Q}_{\NN}$
and several of its extensions  can be modeled as Cuntz-Pimsner and respectively Toeplitz-type
$C^*$-algebras associated to product systems over the semigroup $\NN^\times$ of Hilbert bimodules.
Indeed, it is shown in \cite{BanHLR}  that both $\Tt(\NN\rtimes\NN^\times)$ and its natural subquotients $\Tt_{add}$ and $\Tt_{mult}$  are the Nica-Toeplitz algebras of product systems over $\NN^\times$.
Both $\Tt_{add}$ and  $\mathcal{Q}_{\NN}$ were realized  by different methods
as algebras associated to product systems by the present authors in \cite{HLS}  and  by
Yamashita in \cite{Yam}, respectively.  Since the product system structure of these algebras
arises from very natural endomorphisms and corresponding transfer operators, it is worthwhile to
investigate the general problem of whether KMS states can be described systematically for $C^*$-algebras
associated to product systems over semigroups of Hilbert bimodules.

Given a cancellative semigroup $P$ with identity, a product system $X$ over $P$ of Hilbert bimodules
over a $C^*$-algebra $A$ is a family of right Hilbert $A$--$A$-bimodules $X_s$ for $s\in P$, which forms
a semigroup compatible with the Hilbert bimodule structure of the $X_s$'s. The Toeplitz algebra
$\Tt(X)$ is universal for Toeplitz representations of the product system $X$ which  respect the  semigroup multiplication,
 see  \cite{Fow}. When $(G, P)$ is a quasi-lattice ordered group and $X$ is a compactly aligned
product system, Fowler argued that a quotient of $\Tt(X)$  which encodes the Nica-covariant representations
of $X$ is the appropriate object of study. Following \cite{BanHLR}, we denote this quotient  $\NT(X)$ and
refer to it as the \emph{Nica-Toeplitz algebra of $X$}.

 In the case of a single right Hilbert $A$--$A$-bimodule $X$, Laca and Neshveyev \cite{Lac-Nes}
constructed KMS$_\beta$ states of $\Tt(X)$ for  $\beta\in (0, \infty)$ from certain traces of the coefficient
algebra $A$ by means of state extensions  to the fixed point algebra associated to the canonical
gauge action which satisfy a scaling-type condition. Composition with  the canonical conditional
expectation then gives rise to a state of the Toeplitz algebra which fulfills the KMS$_\beta$ condition.
 A generalization of the construction in \cite{Lac-Nes} was obtained by Kajiwara and Watatani  in
the case where the left action of the bimodule is not injective, \cite{Ka-Wa}.

 The algebra $\NT(X)$ carries a coaction of $G$, and admits a conditional expectation onto its fixed-point
algebra, or \emph{core},  $\Ff$. The basic strategy is to identify those states of $\Ff$ which extend
to KMS states of $\NT(X)$. For ground states, it is possible to give a very general
necessary and sufficient condition on states of $\Ff$ which extend to ground states of $\NT(X)$.
However, for $\beta\in(0, \infty)$, only a necessary condition on  KMS$_\beta$ states may be obtained
in the greatest generality. In either case, it is  desirable to reduce the problem further and characterize
KMS states in terms of states (or tracial states) of $A$.

\smallskip
We now briefly summarize the content of the present paper.
Taking into account our motivating examples (notably $\Tt(\NN\rtimes\NN^\times)$ and $\Tt_{add}$),
for large part of our analysis we  assume that $(G, P)$ is a lattice ordered group and that each bimodule
$X_s$ for $s\in P$ has a finite orthonormal basis (as a right Hilbert $A$-module) compatible with the
semigroup multiplication; we call such $X$ a \emph{product system of finite type}, see
section~\ref{section_finite-type}. In this setting, we identify a scaling condition on a tracial state $\phi$ of $\Ff$
which is sufficient for the composition of $\phi$ with the conditional expectation onto $\Ff$ to produce a KMS$_\beta$
state of $\NT(X)$, see Theorem~\ref{thm:Laca-12-KMSbeta}. Since this condition involves scaling
by isometries in $\NT(X)$  (in the case at hand these arise from the elements in the orthonormal bases for
the $X_s$), this result is similar in spirit to \cite[Theorem 12]{Lac}.

The next question we address is how to decide when a state of $\Ff$ is tracial and satisfies the scaling
condition. Under some fairly natural hypotheses on the number of elements in the orthonormal bases,
we show in Theorem~\ref{thm:from-trace-A-to-trace-F} that a state of $\Ff$ which restricts to a tracial
state on $A$ and satisfies the scaling condition on elements of  $A$ is a trace of $\Ff$.
Under a further condition of convergence  of a certain series in an interval $(\beta_c,\infty)$ for some critical value
$\beta_c>0$, we prove in Theorem~\ref{thm:KMsbeta-from-tracetau}  that for $\beta>\beta_c$ every
tracial state of $A$ gives rise to a state of $\Ff$ that satisfies the hypotheses of
 Theorem~\ref{thm:from-trace-A-to-trace-F}, and hence gives rise to a KMS$_\beta$ state of $\NT(X)$.
Ground states are much easier to analyze, see Proposition~\ref{prop:ground-states}.

In section~\ref{section:core}, we first establish some properties of the core $\Ff$, after which we
show that, under mild hypotheses on $P$, every KMS$_\beta$ state above a certain critical value
$\beta_0$ satisfies the reconstruction formula found by Laca and Raeburn in \cite[\S 10]{Lac-Rae2}.
Based on this we identify conditions on $X$ that guarantee that the parametrization of KMS states
in terms of traces on $A$ in Theorem~\ref{thm:KMsbeta-from-tracetau} is
 surjective (Corollary~\ref{cor:surjectivity-of-parametrization}) and injective
(Theorem~\ref{thm:inj-of-parametrization}). We compare our general results with the similar results obtained
for a specific system in  \cite{Lac-Rae2}, see Remark \ref{comparisonwithLR}.
A larger class of examples from the forthcoming paper \cite{Bro-Lar} by Brownlowe and the second author is discussed in
Example \ref{brla}.

Our methods of construction of KMS states reflect the properties of elements in the orthonormal bases
for the bimodules, so we think they are very natural in this setup. They also explain the origin of some of the
computations and resulting formulas from \cite{Lac-Rae2} upon viewing the $C^*$-algebra
$\Tt(\NN\rtimes\NN^\times)$ as a $\NT(X)$, cf. \cite{BanHLR}.

\smallskip
{\bf Acknowledgement.}
We would like to thank Nicolai Stammeier for useful comments on an earlier version of this paper.

\section{Preliminaries}\label{section:Prelim}

%%%%%%%%%%%%%%%%%%%%%%%%%%%%%%%%%%%%%%%%%%%%%%%%%%
%%%%%%%%%%%%%%%%%%%%%%%%%%%%%%%%%%%%%%%%%%%%%%%%%%

\subsection{Product systems of Hilbert bimodules}\label{Prelim-1}

Let $A$ be a $C^*$-algebra and $X$ be a complex vector space with a right action of $A$.
Suppose that there is an $A$-valued inner product
$\langle \cdot , \cdot \rangle _A$ on $X$ which is conjugate linear in the first variable
and linear in the second variable, and satisfies
\begin{enumerate}
\item $\langle\xi, \eta\rangle _A =\langle\eta , \xi\rangle_A^*$,
\item $\langle\xi, \eta\cdot a\rangle_A=\langle\xi, \eta\rangle_A\, a$,
\item $\langle\xi, \xi\rangle_A \geq 0 $ and $\langle\xi, \xi\rangle_A=0 \; \Longleftrightarrow\; \xi=0$,
\end{enumerate}
for $\xi,\eta\in X$ and $a\in A$. Then $X$ becomes a right Hilbert $A$-module
when it is complete with respect to the norm given by
$\|\xi\|:=\|\langle\xi,\xi\rangle _A\|^{\frac{1}{2}}$ for $\xi\in X$.

A linear map $T:X\rightarrow X $ is said to be adjointable if there is a map $T^*:X\rightarrow X$
such that $\langle T\xi,\zeta\rangle_A=\langle\xi,T^*\zeta\rangle_A$ for all $\xi,\eta\in X$.
The set $\Ll(X)$ of all adjointable operators on $X$ endowed with the operator norm
is a $C^*$-algebra. The rank-one operator $\theta_{\xi, \eta}$ defined on $X$ as
\begin{equation}
\theta_{\xi,\eta}(\zeta)=\xi\cdot\langle \eta,\zeta\rangle_A \; \; \text{for} \; \xi, \eta, \zeta\in X,
\end{equation}
is adjointable and we have $\theta_{\xi,\eta}^*=\theta_{\eta,\xi}$. Then
$\Kk(X)=\clsp\{\theta_{\xi, \eta}\mid \xi, \eta\in X\}$ is the ideal of (generalized)
compact operators in $\Ll(X)$.

\vspace{2mm}
If $\varphi:A\rightarrow\Ll(X)$ is a $*$-homomorphism, then $\varphi$ induces a left action of $A$
on a right Hilbert $A$-module $X$ given by $a\cdot\xi=\phi(a)\xi$, for $a\in A$ and $\xi\in X$.
Then $X$ becomes a right Hilbert $A$--$A$-bimodule. The standard bimodule $_AA_A$ is
equipped with $\langle a, b\rangle _A=a^*b$, and the right and left actions are simply given
by right and left multiplication in $A$, respectively.

For right Hilbert $A$--$A$-bimodules $X$ and $Y$, the (balanced) tensor product $X\otimes_AY$ becomes
a right Hilbert $A$--$A$-bimodule with the natural right action,  the left action implemented by the
homomorphism $A\ni a \mapsto\varphi(a)\otimes_A I_Y\in\Ll(X\otimes_A Y)$, and the $A$-valued
inner product given by
\begin{equation}
\langle\xi_1\otimes_A\eta_1 , \xi_2\otimes_A\eta_2\rangle_A=\langle\langle\xi_2,
\xi_1\rangle_A\cdot\eta_1, \eta_2\rangle_A,
\end{equation}
for $\xi_i\in X$ and $\eta_i\in Y$.

\vspace{2mm}
Let $P$ be a multiplicative semigroup with identity $e$, and $A$ a unital $C^*$-algebra.
For each $s\in P$ let $X_s$ be a complex vector space.
Then the disjoint union $X := \bigsqcup_{s\in P}X_{s}$ is a {\em product system} over $P$
if the following conditions hold:
\begin{enumerate}\renewcommand{\theenumi}{P\arabic{enumi}}
\item\label{it:P-1} For each $s\in P\setminus\{e\}$, $X_s$ is a right Hilbert $A$--$A$-bimodule.
\item\label{it:P-2}$X_e$ equals the standard bimodule $_AA_A$.
\item\label{it:P-3} $X$ is a semigroup such that $\xi\eta\in X_{sr}$ for $\xi\in X_s$ and $\eta\in X_r$,
and for $s, r\in P\setminus\{e\}$, this product extends to an isomorphism
$F^{s,r} : X_s\otimes_A X_r\rightarrow X_{sr}$ of right  Hilbert $A$-$A$-bimodules. If $s$ or $r$ equals $e$
then the corresponding product in $X$ is induced by the $A$-$A$-bimodule structure on the fibers.
\end{enumerate}
We denote by $\langle\cdot,\cdot\rangle_s$ the $A$-valued inner product on $X_s$,
 by  $\rho_s$ the right action of $A$ on $X_s$, and by $\varphi_s$ the homomorphism
from $A$ into $\Ll(X_s)$.  We have $\varphi_{sr}(a)(\xi\eta)=(\varphi_s(a)\xi)\eta$ for all
$\xi\in X_s$, $\eta\in X_r$, and $a\in A$.

Let $I_s$ be the identity operator in $\Ll(X_s)$ for every $s\in P$.
The product system $X$ is \emph{associative} provided that
\begin{equation}\label{ps-associative}
F^{sr, q}(F^{s, r}\otimes_A I_q)=F^{s, rq}(I_s\otimes_A F^{r, q})
\end{equation}
 for all $s, r, q\in P$, see e.g. \cite{ShaSol} or \cite{Vi}.

\begin{rmk}\rm
For $s\in P$, the multiplication on $X$ induces maps
$F^{s, e}:X_s\otimes_A X_e\rightarrow X_s$ and $F^{e, s}:X_e\otimes_A X_s\rightarrow X_s$
by multiplication $F^{s, e}(\xi\otimes a)=\xi\, a$ and $F^{e, s}(a\otimes\xi)=a\, \xi$
for $a\in A$ and $\xi\in X_s$. Note that $F^{s,e}$ is automatically an isomorphism,
but $F^{e,s}$ may not be. The latter map is an isomorphism whenever $\overline{\varphi(A)X_s}=X_s$,
in which case $X_s$ is called essential, see \cite{Fow}. If $A$ is unital and $\varphi(1)\xi=\xi$ for all
$\xi\in X_s$ then $X_s$ is essential.
\end{rmk}

For each pair $s, r\in P\setminus\{e\}$, the isomorphism
$F^{s, r} : X_s\otimes_A X_r\rightarrow X_{sr}$
allows us to define a $*$-homomorphism $i_s^{sr}:\Ll(X_{s})\to\Ll(X_{sr})$ as
\begin{equation}
i_s^{sr}(S)=F^{s, r}(S\otimes_A I_r)(F^{s, r})^*,
\end{equation}
for $S\in\Ll(X_s)$.  When $s=e$, the homomorphism $i_e^r$ defined on $\Ll(A)=A$ is given simply
by $i_e^r(a)=\varphi_r(a)$ for $a\in A$. Also, $i_s^s=I_s$ for all $s\in P$.

\vspace{2mm}
Many interesting product systems arise over semigroups equipped with additional structures.
In \cite{Ni}, $(G, P)$ is called a quasi-lattice ordered group if (i) $G$ is a discrete group, (ii) $P$ is
a subsemigroup of $G$ with $P\bigcap P^{-1}=\{e\}$, (iii)  every two elements $s,r\in G$ which have
a common upper bound in $P$ have a least upper bound $s\vee r \in P$ with respect to the order
given by $s\preceq r\; \Leftrightarrow\; s^{-1}r \in P$. If this is the case we write $s\vee r<\infty$,
otherwise $s\vee r=\infty$.

Assuming $X$ is a product system over $P$ with $(G,P)$ a quasi-lattice ordered group, there
naturally arises a certain property related to  compactness.
A product system $X=\sqcup_{s\in P}X_s$ is called {\it compactly aligned}, \cite{Fow}, if
for all $s, r\in P$ with $s\vee r<\infty$ and $S\in\Kk(X_s)$ and $T\in\Kk(X_r)$, we have
\begin{equation}
i_s^{s\vee r}(S)i_r^{s\vee r}(T)\in\Kk(X_{s\vee r}).
\end{equation}

%%%%%%%%%%%%%%%%%%%%%%%%%%%%%%%%%%%%%%%%%%%%%%%
%%%%%%%%%%%%%%%%%%%%%%%%%%%%%%%%%%%%%%%%%%%%%%%

\subsection{$C^*$-algebras associated to product systems}\label{Prelim-2}

Let $P$ be a semigroup with identity, $A$ be a unital $C^*$-algebra, and
$X=\sqcup_{s\in P}X_s$ be a product system of right Hilbert $A$--$A$-bimodules over $P$.

Let $C$ be a $C^*$-algebra. A mapping $\psi : X\longrightarrow C$ is said to be a Toeplitz
representation of $X$ if the following conditions hold:
\begin{enumerate}\renewcommand{\theenumi}{T\arabic{enumi}}
\item\label{it:T-1} for each $s\in P\setminus\{e\}$, $\psi_s:=\psi\upharpoonright_{X_s}$ is linear,
\item\label{it:T-2} $\psi_e:A\longrightarrow C$ is a $C^*$-homomorphism,
\item\label{it:T-3} $\psi_s(\xi)\psi_r(\eta)=\psi_{sr}(\xi\eta)\; \; $ for $ \; \xi\in X_s$ and $\eta\in X_r$,
\item\label{it:T-4} $\psi_s(\xi)^*\psi_r(\eta)=\psi_e(\langle\xi, \eta\rangle_s)$ for $\xi, \eta\in X_s$.
\end{enumerate}
As shown by Pimsner in \cite{P}, for each $s\in P$ there exists a corresponding $*$-homomorphism
$\psi^{(s)} : \Kk(X_s) \longrightarrow C$ such that
\begin{equation}
\psi^{(s)}(\theta_{\xi,\eta})=\psi_s(\xi)\psi_s(\eta)^*\, , \; \text{for}\; \xi,\eta\in X_s.
\end{equation}

Assume $(G,P)$ is a quasi-lattice ordered group and $X$ is compactly aligned.
In \cite{Fow}, a Toeplitz representation  $\psi$ of $X$ is said to be {\em Nica covariant} if
\begin{equation}\label{eq:Nica-cov-rep}
\psi^{(s)}(S)\psi^{(r)}(T)=
    \begin{cases}\psi^{(s\vee r)}(i^{s\vee r}_s(S)i^{s\vee r}_r(T)) &\; \text{if} \; s\vee r <\infty \\
        0 \; & \; \; \text{otherwise}
    \end{cases}
\end{equation}
for $S\in {\mathcal K}(X_s)$, $T\in {\mathcal K}(X_r)$, and $s, r\in P$.

\begin{dfn}(\cite{Fow, CLSV, BanHLR}) Let $(G,P)$ be a quasi-lattice ordered group and $X$
a compactly aligned product system  over $P$. The Nica-Toeplitz algebra  $\NT(X)$ is the
$C^*$-algebra generated by a universal Nica covariant Toeplitz representation $i_X$ of $X$.
\end{dfn}

Fix a compactly aligned product system $X$ of right Hilbert $A$--$A$-bimodules  over a
semigroup $P$ in a quasi-lattice ordered group $(G, P)$. Let $i_X$ be the universal
Nica covariant Toeplitz representation of $X$ and denote by $i_s$ the restriction of
$i_X$ to $X_s$ for  $s\in P$. Recall that $\NT(X)$ is spanned by $i_s(\xi)i_r(\eta)^*$
for $\xi\in X_s, \eta\in X_r$, and there is a gauge coaction $\delta$ of $G$ such that
$\delta(i_s(\xi))=i_s(\xi)\otimes s$, cf. \cite{Fow, SY, CLSV}. The \emph{core} of $\NT(X)$ is
the $C^*$-subalgebra $\Ff$ spanned by the monomials $i_s(\xi)i_s(\eta)^*$ for
$\xi, \eta \in X_s$ and $s\in P$. Then
\begin{equation}\label{eq:core_as_compacts}
\Ff=\clsp\{ i^{(s)}(T)\mid s\in P, \,T\in \Kk(X_s)\},
\end{equation}
see e.g.  equation (3.4) in \cite{CLSV}. Let $\Phi^\delta$ be
the conditional expectation from $\NT(X)$ onto $\Ff$ given by
\begin{equation}\label{def:condexp}
\Phi^\delta(i_s(\xi)i_r(\eta)^*)=\begin{cases}i_s(\xi)i_s(\eta)^*  &\text{ if }s=r \\ 0  &\text{ otherwise}
\end{cases}
\end{equation}
for $s, r\in P$. Recall that a finite subset $F$ of $P$ is $\vee$-closed if $r\vee s\in F$ for all $r,s\in F$.
For a finite subset $F$ of $P$ which is $\vee$-closed, \cite[Lemma 3.6]{CLSV} says that
\begin{equation}\label{def:B_F}
B_F:=\{\sum_{s\in F} i^{(s)}(T_s)\mid T_s \in \Kk(X_s)\}
\end{equation}\label{def:B_B}
is a $C^*$-subalgebra of $\Ff$, and we have $\Ff=\overline{\bigcup_F B_F}$.

%%%%%%%%%%%%%%%%%%%%%%%%%%%%%%%%%%%%%%%%%%%%%%%%%%

\subsection{The Fock representation and Nica covariance}\label{Prelim-3}

Let $X$ be a  product system over $P$ of right Hilbert $A$--$A$-bimodules and let $l:X\to \Ll(F(X))$
be the Fock representation of $X$ constructed in \cite[page 340]{Fow}. We use the notation of
\cite{Vi} to describe $l$: the restriction of $l$ to $X_s$ is given by $l_s(\xi)\eta=F^{s,r}(\xi\otimes_A \eta)$
if $\xi\in X_s$ and $\eta \in X_r$ for $r\in P$. The adjoint acts by
$$
l_s(\eta)^*\zeta=\begin{cases} \varphi_{s^{-1}r}(\langle \eta, \zeta'\rangle_s)\zeta^{''}
&\text{ if } r\in sP \text{ and }\zeta=F^{s, s^{-1}r}(\zeta'\otimes_A \zeta^{''})\\
0  &\text{ if }  r\notin sP.
\end{cases}
$$
Let $\xi,\eta\in X_s$ for $s\in P$, and $\zeta\in X_r$ for $r\in P$. If $r\notin sP$, then
$l_s(\xi)l_s(\eta)^*\zeta=0$. If $r\in sP$ then we have
\begin{align*}
l_s(\xi)l_s(\eta)^*\zeta
&=F^{s, s^{-1}r}(\xi\otimes_A \varphi_{s^{-1}r}(\langle \eta, \zeta'\rangle_s)\zeta^{''})\\
&=F^{s, s^{-1}r}(\theta_{\xi, \eta}(\zeta')\otimes_A \zeta^{''})\\
&=F^{s, s^{-1}r}(\theta_{\xi, \eta}\otimes_A I_{s^{-1}r})(\zeta'\otimes_A \zeta^{''})\\
&=i_s^r(\theta_{\xi,\eta})\zeta.
\end{align*}

It was asserted in \cite[\S 4]{SY} that for $(G, P)$ quasi-lattice ordered and $X$  compactly aligned
Fowler had proved that $l$ is Nica covariant as in (\ref{eq:Nica-cov-rep}). However,
although one may use \cite[Propositions 5.6 and 5.9]{Fow} to deduce
this claim, there is no such explicit  result in \cite{Fow}.

It is an instructive exercise to see how the quasi-lattice ordered property of $(G, P)$ almost imposes
the condition on $X$ being compactly aligned, and consequently makes $l$ Nica covariant as a
representation into the $C^*$-algebra $\Ll(F(X))$. Indeed, let $\theta_{\xi, \eta}\in \Kk(X_s)$ and
$\theta_{z,w}\in \Kk(X_r)$ for $s,r\in P$. What can be said
of the element $K_{s,r}:=l^{(s)}(\theta_{\xi, \eta})l^{(r)}(\theta_{z,w})$ in $\Ll(F(X))$?
If $\zeta\in X_q$ then $K_{s,r}\zeta=0$ unless $q\in sP\cap rP$ or, equivalently, $s\vee r<\infty$
and $q\in (s\vee r)P$. Thus for $\zeta\in X_{s\vee r}$ we have
$K_{s,r}\zeta=i_s^{s\vee r}(\theta_{\xi, \eta})i_r^{s\vee r}(\theta_{z,w})\zeta$. Now, if
$i_s^{s\vee r}(\theta_{\xi, \eta})i_r^{s\vee r}(\theta_{z,w})=\theta_{x,y}$ for some
$x,y\in X_{s\vee r}$, it follows  that $K_{s,r}\zeta=l^{(s\vee r)}(\theta_{x,y})\zeta$. By linearity and continuity,
 $$
K_{s,r} =  l^{(s)}(\theta_{\xi, \eta})l^{(r)}(\theta_{z,w})=
l^{(s\vee r)}(i_s^{s\vee r}(\theta_{\xi, \eta})i_r^{s\vee r}(\theta_{z,w}))
 $$
 whenever $i_s^{s\vee r}(\theta_{\xi, \eta})i_r^{s\vee r}(\theta_{z,w})\in \Kk(X_{s\vee r})$.
So if  $X$ is compactly aligned  in Fowler's sense, i.e. if $i_s^{s\vee r}(S)i_r^{s\vee r}(T)\in
\Kk(X_{s\vee r})$ whenever $S\in \Kk(X_s)$ and $T\in \Kk(X_r)$, then $l$  is Nica covariant, since
\begin{equation}\label{eq:l-Nica-cov}
l^{(s)}(S)l^{(r)}(T)=l^{(s\vee r)}(i_s^{s\vee r}(S)i_r^{s\vee r}(T)).
\end{equation}
When $s\vee r=\infty$ identity \eqref{eq:l-Nica-cov} holds as well,  because then both sides are $0$.

By the universal property of $\NT(X)$  there is a homomorphism
\begin{equation}\label{def:l_star}
l_*:\NT(X)\to \Ll(F(X))
\end{equation}
such that $l_*(i_s(\xi))=l_s(\xi)$ for all $s\in P$ and $\xi \in X_s$.

%%%%%%%%%%%%%%%%%%%%%%%%%%%%%%%%%%%%%%%%%%%%%%%%%%%
%%%%%%%%%%%%%%%%%%%%%%%%%%%%%%%%%%%%%%%%%%%%%%%%%%%

\section{KMS states on the Nica-Toeplitz algebra of product systems}\label{section:KMS_product_systems}

KMS states on the Toeplitz algebra associated to a single bimodule were studied in many contexts,
and a general unified approach was obtained in \cite{Lac-Nes}.
In the present paper, we aim to analyze KMS states in the context of product systems of right Hilbert
bimodules. We begin by introducing a certain type of dynamics $\sigma_t$, $t\in \RR$, on
the algebra $\NT(X)$ for an arbitrary compactly aligned product system $X$ over $P$ in case $(G, P)$ is
a quasi-lattice ordered group. Our construction is analogous to quasi-free dynamics
on Cuntz-Pimsner algebras considered in \cite{Z} and \cite{Lac-Nes}.

Later we introduce a class of compactly aligned product systems of finite type over a lattice semigroup
 $P$ and analyze KMS states corresponding to certain natural dynamics. Our characterizations of the
KMS$_\beta$ and the ground states in terms of certain states
of the core $\Ff$ of $\NT(X)$ are very much in the spirit of Laca's work \cite{Lac}, see also
\cite{Lac-Nes2}.

%%%%%%%%%%%%%%%%%%%%%%%%%%%%%%%%%%%%%%%%%%%%%%%%%

\subsection{Compactly aligned product systems over a quasi-lattice ordered group.}
\begin{prop}\label{prop:dynamics_on_ps}
Let $(G,P)$ be a quasi-lattice ordered group and $X$ a compactly aligned product system over $P$
of right Hilbert $A$--$A$-bimodules. Assume that for each $s\in P$ there is a strongly continuous
one-parameter group $t\to U_t^{(s)}$ in $\Ll(X_s)$, $t\in \RR$, such that $U_t^{(s)}$ commutes
with $\varphi_s(A)$, $U_t^{(s)}$ is unitary for all $t$, $U_0^{(s)}=I_s$, and
\begin{equation}\label{eq:one_param_gen_TX}
U_t^{(s)}(\varphi_s(a)\xi)=\varphi_s(a)(U_t^{(s)}\xi),
\end{equation}
for all $a\in A$, $s\in P$, $\xi\in X_s$, and $t\in \RR$. If, in addition,
the isomorphisms  $F^{s,r}:X_s\otimes_A X_r\to X_{sr}$ satisfy
\begin{equation}\label{eq:UcompatibleX}
 F^{s,r}\circ (U_t^{(s)}\otimes_AU_t^{(r)})=U_t^{(sr)}\circ F^{s,r}
\end{equation}
for all $s,r\in P$, then there is a one-parameter group of automorphisms
$t\to \sigma_t\in \operatorname{Aut}(\NT(X))$ such that
\begin{equation}\label{eq:induced_dynamics}
\sigma_t(i_e(a))=i_e(a) \quad \text{ and }\quad \sigma_t(i_s(\xi))=i_s(U_t^{(s)}\xi)
\end{equation}
 for all $a\in A$, $\xi\in X_s$ and $s\in P$.
\end{prop}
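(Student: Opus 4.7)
My plan is to invoke the universal property of $\NT(X)$: for each fixed $t\in\RR$ I will build a Nica covariant Toeplitz representation $\psi^{(t)}:X\to\NT(X)$ by precomposing $i_X$ with the unitaries $U_t^{(s)}$, which will yield a homomorphism $\sigma_t:\NT(X)\to\NT(X)$ realising \eqref{eq:induced_dynamics}. Concretely, set $\psi^{(t)}_e(a):=i_e(a)$ and $\psi^{(t)}_s(\xi):=i_s(U_t^{(s)}\xi)$ for $s\in P\setminus\{e\}$ and $\xi\in X_s$. Linearity \eqref{it:T-1} and the $*$-homomorphism property \eqref{it:T-2} are immediate. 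For \eqref{it:T-3}, using $\xi\eta=F^{s,r}(\xi\otimes_A\eta)$ together with the compatibility \eqref{eq:UcompatibleX} gives
\begin{equation*}
\psi^{(t)}_s(\xi)\psi^{(t)}_r(\eta)=i_{sr}\bigl(F^{s,r}(U_t^{(s)}\xi\otimes_A U_t^{(r)}\eta)\bigr)=i_{sr}\bigl(U_t^{(sr)}(\xi\eta)\bigr)=\psi^{(t)}_{sr}(\xi\eta),
\end{equation*}
and \eqref{it:T-4} follows because $U_t^{(s)}$ is unitary, so $\langle U_t^{(s)}\xi,U_t^{(s)}\eta\rangle_s=\langle\xi,\eta\rangle_s$.

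The main work is to verify Nica covariance \eqref{eq:Nica-cov-rep} for $\psi^{(t)}$. From the rank-one formula I read off that the induced maps on compacts satisfy $\psi^{(t),(s)}(\theta_{\xi,\eta})=i^{(s)}(\theta_{U_t^{(s)}\xi,U_t^{(s)}\eta})=i^{(s)}(\Ad U_t^{(s)}(\theta_{\xi,\eta}))$, and by linearity and continuity
\begin{equation*}
\psi^{(t),(s)}(T)=i^{(s)}\bigl(\Ad U_t^{(s)}(T)\bigr)\qquad (T\in\Kk(X_s)).
\end{equation*}
The key observation is that the homomorphism $i_s^{sr}$ intertwines the conjugations by $U_t$, i.e.\ $i_s^{sr}\circ\Ad U_t^{(s)}=\Ad U_t^{(sr)}\circ i_s^{sr}$. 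This follows from a direct manipulation using \eqref{eq:UcompatibleX}: inserting $(U_t^{(r)})^*U_t^{(r)}=I_r$ on the middle factor of $F^{s,r}(U_t^{(s)}S(U_t^{(s)})^*\otimes_AI_r)(F^{s,r})^*$ and applying \eqref{eq:UcompatibleX} on either side converts it into $U_t^{(sr)}i_s^{sr}(S)(U_t^{(sr)})^*$. Applying this identity with $sr$ replaced by $s\vee r$ and similarly for $r$, together with Nica covariance of $i_X$, gives
\begin{equation*}
\psi^{(t),(s)}(S)\psi^{(t),(r)}(T)=i^{(s\vee r)}\bigl(\Ad U_t^{(s\vee r)}\bigl(i_s^{s\vee r}(S)i_r^{s\vee r}(T)\bigr)\bigr)=\psi^{(t),(s\vee r)}\bigl(i_s^{s\vee r}(S)i_r^{s\vee r}(T)\bigr)
\end{equation*}
when $s\vee r<\infty$, and $0$ otherwise. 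The universal property of $\NT(X)$ now produces the homomorphism $\sigma_t$.

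To finish, the family $\{U_t^{(s)}\}$ being a one-parameter unitary group in each $\Ll(X_s)$ makes $\sigma_{t_1+t_2}$ and $\sigma_{t_1}\sigma_{t_2}$ agree on the generators $i_s(\xi)$ and $i_e(a)$, so $\sigma_{t_1+t_2}=\sigma_{t_1}\sigma_{t_2}$; in particular $\sigma_0=\id$ and $\sigma_{-t}$ inverts $\sigma_t$, making each $\sigma_t$ an automorphism. Strong continuity is checked on the dense $*$-subalgebra spanned by words $i_{s_1}(\xi_1)i_{r_1}(\eta_1)^*\cdots$: by strong continuity of each $t\mapsto U_t^{(s)}\xi$ and the isometry of $i_s$, the map $t\mapsto\sigma_t(i_s(\xi))$ is norm continuous, and an $\varepsilon/3$-argument using the fact that each $\sigma_t$ is isometric extends continuity to all of $\NT(X)$. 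The main obstacle is the Nica-covariance verification, which is where the compatibility hypothesis \eqref{eq:UcompatibleX} is essential; the remaining steps are largely bookkeeping via universality.
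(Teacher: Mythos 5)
Your proposal is correct and follows essentially the same route as the paper: defining a new Nica covariant Toeplitz representation $\psi^{(t)}_s=i_s\circ U_t^{(s)}$, reducing Nica covariance to the intertwining identity $i_s^{s\vee r}\circ\operatorname{Ad}U_t^{(s)}=\operatorname{Ad}U_t^{(s\vee r)}\circ i_s^{s\vee r}$ (the paper's equations \eqref{eq:psi_from_U}--\eqref{eq:help_existence_of_dynamics}), proving it by the same insertion-of-unitaries manipulation with \eqref{eq:UcompatibleX}, and invoking universality plus the group law on generators. Your additional remarks on strong continuity go slightly beyond what the paper records but do not change the argument.
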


\begin{proof}
Note that $U_t^{(s)}\otimes_AU_t^{(r)}$ is a well-defined operator on $X_s\otimes_A X_r$
due to the assumption that $U_t^{(q)}$ and  $\varphi_q(A)$ commute for every $q\in P$.
Roughly,  \eqref{eq:UcompatibleX} says that the one-parameter unitary group is compatible with
the product system, and ensures that the one-parameter group $\{U_t^{(s)}\}_{t\in \RR, s\in P}$
combine to give a dynamics on $\NT(X)$. To see this, we define a new representation $\psi$ of $X$
in $\NT(X)$ by $\psi_e(a)=i_e(a)$ for $a\in A$ and $\psi_s(\xi)=i_s(U_t^{(s)}\xi)$
for $\xi \in X_s$ and $s\in P$. Condition \eqref{eq:one_param_gen_TX} shows that $(\psi_e,\psi_s)$ is a
Toeplitz representation of $X_s$ for all $s\in P$, and condition \eqref{eq:UcompatibleX} implies that
$\psi_{sr}(\xi\eta)=\psi_s(\xi)\psi_r(\eta)$ for all $\xi \in X_s$, $\eta\in X_r$, $s, r\in P$.

We next prove that $\psi$ is Nica covariant, from which by applying  the universal
property of $\NT(X)$ we deduce the existence of $*$-homomorphisms $\sigma_t$, $t\in\RR$,
as postulated in \eqref{eq:induced_dynamics}.

Note first that  $\psi^{(s)}(\theta_{\xi,\eta})=i^{(s)}(\theta_{U_t^{(s)}\xi,
U_t^{(s)}\eta})=i^{(s)}(U_t^{(s)}\theta_{\xi,\eta}U_t^{(s)*})$, for all $s\in P$
and $\theta_{\xi,\eta}\in\Kk(X_s)$. By continuity of all maps involved we have
\begin{equation}\label{eq:psi_from_U}
\psi^{(s)}(S)=i^{(s)}(U_t^{(s)}SU_t^{(s)*})
\end{equation}
for all $S\in \Kk(X_s)$ and $s\in P$.

Let $S\in \Ll(X_s)$ and $T\in \Ll(X_r)$ for $s, r\in P$. We aim to prove that
\begin{equation}\label{eq:existence_of_dynamics}
U_t^{(s\vee r)}(i_s^{s\vee r}(S)i_r^{s\vee r}(T))U_t^{(s\vee r)*}
=i_s^{s\vee r}(U_t^{(s)}SU_t^{(s)*})i_r^{s\vee r}(U_t^{(r)}TU_t^{(r)*}).
\end{equation}
For this it suffices to show that
\begin{equation}\label{eq:help_existence_of_dynamics}
U_t^{(s\vee r)}i_s^{s\vee r}(S)=i_s^{s\vee r}(U_t^{(s)}SU_t^{(s)*})U_t^{(s\vee r)},
\end{equation}
since taking adjoints in \eqref{eq:help_existence_of_dynamics} and replacing $S^*$ with
$T$ proves \eqref{eq:existence_of_dynamics}. Now, write $r'=s^{-1}(s\vee r)$. The left-hand side of \eqref{eq:help_existence_of_dynamics} can be transformed as follows:
\begin{align}
U_t^{(s\vee r)}i_s^{s\vee r}(S)
&=U_t^{(s\vee r)}F^{s, r'}(S\otimes_A I_{r'})(F^{s, r'})^*\notag\\
&=F^{s, r'}(U_t^{(s)}\otimes_A U_t^{(r')}) (S\otimes_A I_{r'})(F^{s, r'})^*
\;\;\; \text{by} \;\; \eqref{eq:UcompatibleX} \notag\\
&=F^{s, r'}(U_t^{(s)}S\otimes_A U_t^{(r')}) (F^{s, r'})^*\notag\\
&=F^{s, r'}(U_t^{(s)}SU_t^{(s)*}\otimes_A I_{r'})(U_t^{(s)}\otimes
U_t^{(r')})(F^{s, r'})^* \label{eq:middle_to_dynamics}.
\end{align}
Now, inserting a factor $(F^{s, r'})^*(F^{s, r'})$ after $(U_t^{(s)}SU_t^{(s)*}\otimes_A I_{r'})$,
grouping terms and using \eqref{eq:UcompatibleX} once more  shows that
\eqref{eq:middle_to_dynamics} equals $i_s^{s\vee r}(U_t^{(s)}SU_t^{(s)*})
U_t^{(s\vee r)}$, as claimed in \eqref{eq:help_existence_of_dynamics}. Now, if $S\in \Kk(X_s)$ and
$T\in \Kk(X_r)$, we have $U_t^{(s)}SU_t^{(s)*}\in \Kk(X_s)$
and $U_t^{(r)}TU_t^{(r)*}\in \Kk(X_r)$. Further, since $X$ is compactly aligned,
we also have $i_s^{s\vee r}(S)i_r^{s\vee r}(T)\in \Kk(X_{s\vee r})$ as well as
$$
i_s^{s\vee r}(U_t^{(s)}SU_t^{(s)*})i_r^{s\vee r}(U_t^{(r)}TU_t^{(r)*})\in \Kk(X_{s\vee r})
$$
for $s\vee r<\infty$. In this case, we use  Nica covariance of $i_X$
to deduce that
\begin{align*}
\psi^{(s)}(S)\psi^{(r)}(T)
&=i^{(s)}(U_t^{(s)}SU_t^{(s)*})i^{(r)}(U_t^{(r)}T_t^{(r)*})
\;\;\;\;\; \text{ by }
\eqref{eq:psi_from_U}\\
&=i^{(s\vee r)}(i_s^{s\vee r}(U_t^{(s)}SU_t^{(s)*})i_r^{s\vee r}(U_t^{(r)}TU_t^{(r)*}))\\
&=i^{(s\vee r)}(U_t^{(s\vee r)}(i_s^{s\vee r}(S)i_r^{s\vee r}(T))U_t^{(s\vee r)*})
\;\;\; \text{ by }\eqref{eq:existence_of_dynamics}\\
&=\psi^{(s\vee r)}(i_s^{s\vee r}(S)i_r^{s\vee r}(T))
\;\;\;\;\;\; \text{ by }\eqref{eq:psi_from_U}.
\end{align*}
In the case of $s\vee r=\infty$,  we have $i_s^{s\vee r}(S)i_r^{s\vee r}(T)=0$ which
proves the required Nica covariance of $\psi$.

Consequently, each $\sigma_t$ is a $*$-homomorphism satisfying (\ref{eq:induced_dynamics}).
Since (\ref{eq:induced_dynamics}) immediately implies that $\sigma_0=\id$ and
$\sigma_{t+t'}=\sigma_t\sigma_{t'}$, we see that $\sigma:\RR\to\operatorname{Aut}(\NT(X))$ is the required
one-parameter automorphism group.
\end{proof}

We next recall the notions of KMS$_\beta$ state,  KMS$_\infty$ state, and ground state. Nowadays one
often employs definitions of ground state and KMS$_\beta$ state which are different from, although
of course equivalent to, the more classical ones in \cite{Bra-Rob} and \cite{Ped} (see Section 7 in
\cite{Lac-Rae2} for explanatory details).

Given a $C^*$-algebra $C$ and a homomorphism (a dynamics) $\sigma:\RR\to \Aut(C)$, an
element $c\in C$ is called $\sigma$-\emph{analytic} provided that $\RR\ni t\mapsto \sigma_t(c)$ extends to an
entire function on $\CC$. The set of $\sigma$-analytic elements is dense in $C$, see  \cite[\S 8.12]{Ped}.
For $\beta\in (0, \infty)$, a \emph{KMS$_\beta$ state} of a dynamical system $(C,\sigma)$ is defined as a state
$\omega$ of $C$ which satisfies the KMS$_\beta$ condition
\begin{equation}\label{def:KMS-beta}
\omega(cd)=\omega(d\sigma_{i\beta}(c))
\end{equation}
for all $\sigma$-analytic $c,d$  in $C$. It is known that it suffices to have \eqref{def:KMS-beta}
satisfied for a subset of $\sigma$-analytic elements of $C$ which spans a dense subspace of $C$,
\cite[Proposition 8.12.3]{Bra-Rob}. A state $\omega$ of $C$ is said to be a  {\em ground state} of $(C, \sigma)$
if for every $\sigma$-analytic $c,d$  in $C$, the entire function $\CC\ni z\mapsto \omega(c\sigma_z(d))$ is
bounded on the upper-half plane. Again,  it is known that it suffices to have boundedness for a set of elements
which spans a dense subspace of the $\sigma$-analytic elements.
More recently, the notion of {\em KMS$_\infty$ state}  of $(C,\sigma)$ was coined down by Connes and
Marcolli  in \cite{Con-Mar} and refers to a state which can be realized as a weak$^*$-limit of KMS$_\beta$ states
as $\beta$ runs over a net converging to $\infty$.

Now, we are defining the type of dynamics on a product system we will use throughout
the remining part of this paper, as follows.
Fix a quasi-lattice ordered group $(G, P)$ and a  compactly aligned product system $X$ over $P$.
We are going to define a dynamics arising from a homorphism of G into $(0, \infty)$. Suppose
that $N:G\to (0,\infty)$ is a multiplicative homomorphism. For every $s\in P$ and $t\in \RR$
define $U_t^{(s)}\xi=N(s)^{it}\xi$ in $\Ll(X_s)$. Since $N$ is multiplicative, the family
$\{U_t^{(s)} : s\in P, t\in \RR\}$ satisfies the conditions of Proposition~\ref{prop:dynamics_on_ps}.
Hence there is a dynamics $\sigma^N$ on $\NT(X)$ such that
\begin{equation}\label{def:sigmaN}
\sigma_t^N(i_e(a))=i_e(a) \text{ and } \sigma_t^N(i_s(\xi))=N(s)^{it}i_s(\xi)
\end{equation}
for all $a\in A$, $\xi\in X_s$, $s\in P$. A routine proof of the following lemma is omitted.

\begin{lemma} The spanning elements $\{i_s(\xi)i_r(\eta)^* : \xi\in X_s, \eta\in X_r , s,r\in P\}$ of
$\NT(X)$ are $\sigma^N$-analytic.
\end{lemma}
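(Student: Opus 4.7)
The plan is to take a spanning element $c=i_s(\xi)i_r(\eta)^*$ and write down an explicit entire extension of $t\mapsto\sigma^N_t(c)$; no deep argument is needed, the whole content is that the dynamics on these monomials is scalar multiplication by a one-parameter group in the circle, which obviously extends to an entire function on $\CC$.

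First I would compute $\sigma^N_t$ on $c$. Since each $\sigma^N_t$ is a $*$-automorphism of $\NT(X)$, we have $\sigma^N_t(i_r(\eta)^*)=\sigma^N_t(i_r(\eta))^*$, and by \eqref{def:sigmaN} this equals $(N(r)^{it}i_r(\eta))^*=N(r)^{-it}i_r(\eta)^*$, using that $N(r)^{it}\in\TT$ for real $t$. Multiplying then gives
\begin{equation*}
\sigma^N_t\bigl(i_s(\xi)i_r(\eta)^*\bigr)=\left(\frac{N(s)}{N(r)}\right)^{\!it}i_s(\xi)i_r(\eta)^*.
\end{equation*}

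Next, I would set $\lambda:=\log\bigl(N(s)/N(r)\bigr)\in\RR$ and define, for $z\in\CC$,
\begin{equation*}
f_c(z):=e^{iz\lambda}\,i_s(\xi)i_r(\eta)^*\in\NT(X).
\end{equation*}
At real $z=t$ this coincides with $\sigma^N_t(c)$. The power series $\sum_{n\geq 0}\frac{(iz\lambda)^n}{n!}c$ converges in norm on all of $\CC$ because $\sum_n |z\lambda|^n\|c\|/n!<\infty$, so $f_c$ is a norm-entire $\NT(X)$-valued function. Thus $t\mapsto\sigma^N_t(c)$ has an entire extension, which is precisely what it means for $c$ to be $\sigma^N$-analytic.

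There is essentially no obstacle here: the only thing worth being careful about is using that $\sigma^N_t$ is an automorphism (and hence $*$-preserving) in order to handle the adjoint $i_r(\eta)^*$ correctly, so that the scalar $N(r)^{it}$ gets complex-conjugated to $N(r)^{-it}$. Everything else is a one-line verification from \eqref{def:sigmaN}.
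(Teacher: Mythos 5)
Your argument is correct: the computation $\sigma^N_t(i_s(\xi)i_r(\eta)^*)=N(sr^{-1})^{it}\,i_s(\xi)i_r(\eta)^*$ and the explicit entire extension $z\mapsto e^{iz\log(N(s)/N(r))}\,i_s(\xi)i_r(\eta)^*$ are exactly the "routine proof" that the paper explicitly omits. Nothing is missing; being careful that the adjoint conjugates the scalar is the only point of substance, and you handled it.
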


We aim to establish analogues of \cite[Theorem 12]{Lac}. As we shall see, the degree
of sharpness of our results depends on assumptions on the product system $X$, so we separate the
characterizations of KMS$_\beta$ states, KMS$_\infty$ states, and ground states.

In one direction, for an arbitrary $\NT(X)$, a dynamics $\sigma^N$ where $N$ is an injective
homomorphism, and for every $0<\beta<\infty$, KMS$_\beta$ states turn out to be lifted from
tracial states of $\Ff$ with a scaling property, as shown in Proposition \ref{prop:KMS-state-on-F} below.
The non-trivial converse
will be proved in Theorem~\ref{thm:Laca-12-KMSbeta}, under additional hypotheses  on $X$. Recall that there
is a conditional expectation $\Phi^\delta : \NT(X) \rightarrow\Ff$  given by
$$
\Phi^\delta(i_s(\xi)i_r(\eta)^*)=\begin{cases}i_s(\xi)i_s(\eta)^*  &\text{ if }s=r \\ 0  &\text{ otherwise}\end{cases}
$$
for $s, r\in P$,  as in \eqref{def:condexp}.

\begin{prop}\label{prop:KMS-state-on-F}
Let $(G, P)$ be a quasi-lattice ordered group and $X$ a compactly aligned product system
over $P$. Let $\sigma^N$ be the dynamics on $\NT(X)$ arising from an injective homomorphism
$N:G\to (0,\infty)$. Let $0<\beta<\infty$. If $\omega$ is a KMS$_\beta$ state of $\NT(X)$, then
$\omega$ factors through $\Phi^\delta$ to give a tracial state $\phi$ on $\Ff$ that satisfies
the scaling identity
\begin{equation}\label{scaling}
\phi(i_s(\xi)yi_r(\eta)^*)=\delta_{s,r}N(s)^{-\beta}\phi(y\langle\eta,\xi\rangle_s)
\end{equation}
for all $\xi\in X_s$, $\eta\in X_r$, $s, r\in P$ and $y\in \Ff$.
\end{prop}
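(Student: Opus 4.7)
The plan is to run through three standard KMS arguments: vanishing on off-diagonal monomials, traciality on the fixed-point algebra, and a direct KMS computation giving the scaling formula. Throughout, I use the standard fact that a KMS$_\beta$ state is automatically invariant under its dynamics, so $\omega\circ\sigma_t^N=\omega$ for all $t\in\RR$.

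First I would show that $\omega$ factors through $\Phi^\delta$. On the spanning monomial $i_s(\xi)i_r(\eta)^*$ one has $\sigma_t^N(i_s(\xi)i_r(\eta)^*)=(N(s)N(r)^{-1})^{it}\,i_s(\xi)i_r(\eta)^*$. Since $N$ is injective on $G$, $s\neq r$ forces $N(s)\neq N(r)$, and $\sigma^N$-invariance of $\omega$ yields $\omega(i_s(\xi)i_r(\eta)^*)=0$ whenever $s\neq r$. By linearity and continuity $\omega=\omega\circ\Phi^\delta$, so I set $\phi:=\omega|_\Ff$.

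Next I would verify that $\phi$ is tracial. Each spanning monomial $i_s(\xi)i_s(\eta)^*$ of $\Ff$ is $\sigma^N$-fixed since the factors $N(s)^{\pm it}$ cancel; by density $\Ff$ sits inside the fixed-point algebra of $\sigma^N$. Consequently every $b\in\Ff$ is $\sigma^N$-analytic with $\sigma_{i\beta}^N(b)=b$, and the KMS$_\beta$ condition \eqref{def:KMS-beta} applied to $a,b\in\Ff$ gives $\phi(ab)=\omega(b\,\sigma_{i\beta}^N(a))=\omega(ba)=\phi(ba)$, so $\phi$ is a trace.

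Finally, for the scaling identity, both $i_s(\xi)$ and $y\,i_r(\eta)^*$ are $\sigma^N$-analytic (finite products of $\sigma^N$-analytic elements) and $\sigma_{i\beta}^N(i_s(\xi))=N(s)^{-\beta}i_s(\xi)$, so \eqref{def:KMS-beta} yields
\[
\omega(i_s(\xi)\,y\,i_r(\eta)^*)=N(s)^{-\beta}\,\omega(y\,i_r(\eta)^*\,i_s(\xi)).
\]
If $s=r$, condition \eqref{it:T-4} rewrites $i_s(\eta)^*i_s(\xi)=i_e(\langle\eta,\xi\rangle_s)$, and identifying $A$ with $i_e(A)\subset\Ff$ the right-hand side becomes $N(s)^{-\beta}\phi(y\langle\eta,\xi\rangle_s)$. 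If $s\neq r$, then $y\,i_r(\eta)^*\,i_s(\xi)$ is an eigenvector of $\sigma_t^N$ with non-trivial eigenvalue $(N(s)N(r)^{-1})^{it}$, and $\sigma^N$-invariance of $\omega$ forces the right-hand side to vanish, matching the $\delta_{s,r}$ factor. The main subtlety is nothing more than bookkeeping---recognising that $\Ff$ sits inside the $\sigma^N$-fixed-point algebra, which uses the specific form of $\sigma^N$ pairing $i_s(\xi)$ with scalar $N(s)^{it}$ and is independent of the quasi-lattice or compact-alignment hypotheses.
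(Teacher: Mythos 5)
Your proposal is correct and follows essentially the same route as the paper: off-diagonal vanishing from injectivity of $N$ (you via $\sigma^N$-invariance of KMS states, the paper via applying the KMS condition twice — the same computation), traciality from the fact that $\Ff$ lies in the fixed-point algebra, and the scaling identity by one application of the KMS condition together with \eqref{it:T-4}. The extra care you take in spelling out the scaling step (which the paper calls ``immediate'') is accurate.
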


\begin{proof}
Since all elements of the core $\Ff$ are fixed by $\sigma^N$, it follows that the restriction
of $\omega$ to $\Ff$ is a trace, \cite{Bra-Rob}.

Now let $i_s(\xi)i_r(\eta)^*$ be  a spanning element
of $\NT(X)$ with $\xi\in X_s$, $\eta\in X_r$, $r,s\in P$, and note that by twice applying
the KMS$_\beta$ condition we get
$$
\omega(i_s(\xi)i_r(\eta)^*)=N(sr^{-1})^{-\beta}\omega(i_s(\xi)i_r(\eta)^*).
$$
Since $N$ is injective, $\omega(i_s(\xi)i_r(\eta)^*)=0$ unless $s=r$. In other
words, $\omega$ factors through $\Phi^\delta$ to give a trace $\phi$ on $\Ff$ such that
$\phi\circ \Phi^\delta=\omega$. It is then immediate from  the KMS$_\beta$ condition 
that $\phi$ satisfies the scaling identity \eqref{scaling}.
\end{proof}

We next move to characterize ground states. For $s\in P$, recall that
$B_s:=B_{\{s\}}=\{i^{(s)}(T_s)\mid T_s \in \Kk(X_s)\}$ is a $C^*$-subalgebra of
$\Ff$, as in \eqref{def:B_F}.

\begin{theorem}\label{thm:Laca-12-ground}
Let $(G, P)$ be a quasi-lattice ordered group and $X$ a compactly aligned product
system over $P$. Let $\sigma^N$ be the dynamics on $\NT(X)$ arising from a homomorphism
$N:G\to (0,\infty)$ such that $N(r)\geq 1$ for $r\in P$ and equality holds only when $r=e$.
Then $\phi\mapsto \phi\circ \Phi^\delta$ is an affine isomorphism from
the states of $\Ff$ such that $\phi\upharpoonright_{B_s}=0$ for all $s\in P\setminus \{e\}$ onto the
ground states of $\NT(X)$.
\end{theorem}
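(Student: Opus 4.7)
The plan is to verify both directions of the claimed affine isomorphism by checking the defining ground-state inequality on the dense $\sigma^N$-invariant linear span of the analytic monomials $c=i_s(\xi)i_r(\eta)^*$ and $d=i_u(\mu)i_v(\nu)^*$. Since $\sigma^N_z(d)=N(uv^{-1})^{iz}d$ and $|N(uv^{-1})^{iz}|=N(uv^{-1})^{-\operatorname{Im}z}$, the function $z\mapsto \omega(c\sigma^N_z(d))=N(uv^{-1})^{iz}\omega(cd)$ is bounded on the upper half-plane if and only if $\omega(cd)=0$ whenever $N(uv^{-1})<1$.

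For the necessary direction, let $\omega$ be a ground state. Applying the criterion with $c=i_r(\eta)$ and $d=i_s(\xi)^*$ for $s\ne e$ (so that $N(s^{-1})<1$) yields $\omega(i_r(\eta)i_s(\xi)^*)=0$ for every $r\in P$. Taking $r=s$ gives $\omega(i^{(s)}(\theta_{\eta,\xi}))=0$ for all $\eta,\xi\in X_s$, hence $\omega|_{B_s}=0$ by continuity and density. To conclude that $\omega=(\omega|_\Ff)\circ\Phi^\delta$, one further needs $\omega(i_s(\xi)i_r(\eta)^*)=0$ whenever $s\ne r$: if $s\ne e$ this follows by complex conjugation from the previous step, while if $s=e$ (so $r\ne e$) one applies the criterion directly to $c=i_e(\xi)$ and $d=i_r(\eta)^*$. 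Then $\phi:=\omega|_\Ff$ is a state of $\Ff$ with the required vanishing.

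For the sufficient direction, let $\phi$ be a state of $\Ff$ vanishing on $B_s$ for every $s\ne e$ and set $\omega:=\phi\circ\Phi^\delta$. Since $c$ and $d$ are $\delta$-homogeneous of degrees $sr^{-1}$ and $uv^{-1}$, the product $cd$ is homogeneous of degree $sr^{-1}uv^{-1}$, so $\omega(cd)=0$ unless $su=rv$ in $G$. Assume $su=rv$. If $r\vee u=\infty$ then Nica covariance yields $i_r(\eta)^*i_u(\mu)=0$. Otherwise set $q:=r\vee u$, $u':=r^{-1}q$, $r':=u^{-1}q$; a Nica-covariance reduction extractable from the Fock-representation identities of Subsection~\ref{Prelim-3} and \cite[Propositions~5.6 and 5.9]{Fow} expresses $i_r(\eta)^*i_u(\mu)$ as a norm-convergent sum of monomials $i_{u'}(e_j)i_{r'}(f_j)^*$. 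Combining this with $su'=vr'$ (immediate from $sr^{-1}=vu^{-1}$) gives
\begin{equation*}
cd=\sum_j i_{su'}(\xi e_j)\,i_{su'}(\nu f_j)^*\in B_{su'}.
\end{equation*}
Cancellativity of $P$ together with $P\cap P^{-1}=\{e\}$ imply that $su'=e$ forces $s=e$ and $u\preceq r$, which together with $su=rv$ gives $u=r$ and $v=e$; in that degenerate case $cd\in A=B_e$ and $N(uv^{-1})=N(r)\ge 1$, so the ground-state inequality is automatic. In every other sub-case $su'\ne e$ and $\phi(cd)=0$ by hypothesis.

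Affinity of $\phi\mapsto\phi\circ\Phi^\delta$ is immediate, and injectivity follows from $\Phi^\delta|_\Ff=\operatorname{id}$. The main technical step is the Nica-covariance reduction placing $i_r(\eta)^*i_u(\mu)$ in the closed linear span of $i_{u'}(X_{u'})\,i_{r'}(X_{r'})^*$; this is classical for rank-one operators, where it can be read off from the Fock-representation computation given earlier in the paper, and extends to the general case by linearity and continuity.
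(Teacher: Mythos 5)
Your argument is correct, and your necessity direction matches the paper's; but your sufficiency direction takes a genuinely different route. The paper disposes of the boundedness of $z\mapsto\omega(y\sigma^N_z(i_s(\xi)i_r(\eta)^*))$ for $r\ne e$ with a single Cauchy--Schwarz estimate: setting $y_1=yi_s(\xi)$,
\[
\vert \omega(yy')\vert\leq \omega(y_1y_1^*)^{1/2}\,\omega(i_r(\eta)i_r(\eta)^*)^{1/2},
\]
and the second factor is $\phi(i^{(r)}(\theta_{\eta,\eta}))=0$ because $i_r(\eta)i_r(\eta)^*\in B_r$; no analysis of the product $cd$ is needed and $y$ may be completely arbitrary. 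You instead compute $\Phi^\delta(cd)$ explicitly: the coaction grading reduces to $sr^{-1}uv^{-1}=e$, Fowler's reduction of $i_r(\eta)^*i_u(\mu)$ into $\clsp\{i_{u'}(X_{u'})i_{r'}(X_{r'})^*\}$ (this is \cite[Proposition 5.10]{Fow} --- the same statement the paper invokes later for finite-type systems --- rather than Propositions 5.6 and 5.9) places $cd$ in $B_{su'}$, and a case analysis shows $su'=e$ only in the degenerate situation where $N(uv^{-1})\geq 1$ makes boundedness automatic anyway. Your route is longer and needs the extra bookkeeping ($su'=vr'$, the $r\vee u=\infty$ case, the degenerate case), but it buys something the paper's proof elides: the explicit verification that a ground state kills every monomial $i_s(\xi)i_r(\eta)^*$ with $s\ne r$ and hence factors as $(\omega\restriction_{\Ff})\circ\Phi^\delta$, which is what surjectivity of the correspondence actually requires. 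Two cosmetic points: write $sr^{-1}=vu^{-1}$ rather than $su=rv$, since $G$ need not be abelian in a quasi-lattice ordered group (your derivation of $su'=vr'$ in fact uses the former); and Fowler's proposition gives membership in a closed span rather than a literal norm-convergent series, which suffices because $B_{su'}$ is a closed subalgebra.
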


\begin{proof}
Let $\phi$ be a state of $\Ff$ which is zero on $B_s$ for $s>e$. Set $\omega:=\phi\circ \Phi^\delta$.
Let $y$ be arbitrary and $y'=i_s(\xi)i_r(\eta)^*$ an $\sigma^N$-analytic element in $\NT(X)$.
We must show that the function $F(z):= \omega(y\sigma^N_z(y'))$ is bounded
on the upper-half plane. Since $F(z)=N(sr^{-1})^{iz}\omega(yy')$, this function is bounded on
the upper-half plane in case $r=e$ because $N(s)\geq 1$. If $r>e$, an application of
the Cauchy-Schwarz inequality (where we let $y_1=yi_s(\xi)$) gives
$$
\vert \omega(yy')\vert\leq
\omega(y_1y_1^*)^{1/2}\omega(i_r(\eta)i_r(\eta)^*)^{1/2}.
$$
Since $i_r(\eta)i_r(\eta)^*\in B_r$, the assumption on $\phi$ implies that
$\omega(yy')$, and hence $F(z)$ vanish, proving the required boundedness.

Conversely, if $\omega$ is a ground state of $\NT(X)$, then boundedness on the upper-half
plane   of the function $z\mapsto \omega(y\sigma_z^N(y'))$ for arbitrary $y$ and
$y'=i_r(\eta)^*$ with $r>e$  forces $\omega(y y')$ to be $0$. Hence $\omega$ vanishes on
$B_r$ for any $r>e$, as claimed.

The correspondence $\phi\mapsto \phi\circ \Phi^\delta$ is an affine map by the same argument as in
the proof of \cite[Theorem 12]{Lac}.
\end{proof}

%%%%%%%%%%%%%%%%%%%%%%%%%%%%%%%%%%%%%%%%%%%%%%%

\subsection{Product systems of finite type over a lattice ordered group}\label{section_finite-type}
Throughout this section, $A$ is a unital $C^*$-algebra and $(G, P)$  denotes a {\em lattice ordered group} as in
\cite[Definition 1]{Lac}. That is: $P$ is an abelian
cancellative  semigroup with identity $e$,  $G=PP^{-1}$ is the Groethendieck enveloping group
of $P$, the semigroup $P$ satisfies $P\cap P^{-1}=\{e\}$, and every pair of elements $s,r\in G$ has a (unique)
least common upper bound $s\vee r\in G$ with respect to the partial order given by $s\leq r\iff s^{-1}r\in P$.
Note that for $s, r\in P$, the element $(s\vee r)^{-1}sr$ is their
greatest lower bound, which we denote  $s\wedge r$. The properties of $s\vee r$ and $s\wedge r$ that
are relevant to our analysis are contained in \cite[Lemma 2]{Lac}.

\begin{dfn}\label{def:finite-type}
Let $(G, P)$ be a lattice ordered group and let  $X$ be a product system over $P$ of right Hilbert $A$--$A$-bimodules.
We say that $X$ is \emph{of finite type} if for every $s\in P$ there are $N_s\in \NN$ and elements
$\{\31_0^s,\dots , \31_{N_s-1}^s\}\in X_s$  such that:
\begin{enumerate}
\item $X_s=\left\{\sum_{j=0}^{N_s-1} \varphi_s(a_j)\31^s_j: a_j\in A, j=0,\dots ,N_s-1\right\}$,
\item $X_s=\left\{\sum_{j=0}^{N_s-1} \rho_s(b_j)\31^s_j: b_j\in A, j=0,\dots ,N_s-1\right\}$,
\item $\langle \31^s_j,\31^s_k\rangle_s=\delta_{j, k}$ for $s\in P$, $j,k\in \{0,\dots ,N_s-1\}$, and
\item\label{F-1s1t}
for every $s,r\in P$ there is a map $$\mathfrak{m}_{s,r}:\{0, \dots, N_s-1\}\times \{0, \dots ,N_r-1\}
\to \{0,\dots, N_{sr}-1\}$$ such that
\begin{equation}\label{dotdefinition}
F^{s,r}(\31^s_j\otimes_A \31^r_k)=\31^{sr}_{\mathfrak{m}_{s,r}(j,k)}
\end{equation}
for $j\in\{0, \dots, N_s-1\}$ and $k\in\{0, \dots ,N_r-1\}$.
\end{enumerate}

For $j\in\{0, \dots, N_s-1\}$ and $k\in\{0, \dots ,N_r-1\}$ we will often write $j\cdot k$ for the element
$\mathfrak{m}_{s,r}(j, k)$  in $\{0,\dots, N_{sr}-1\}$. Therefore formula (\ref{dotdefinition}) becomes
$F^{s,r}(\31^s_j\otimes_A \31^r_k)=\31^{sr}_{j\cdot k}$.

In case there is $Z\in A$ such that $\31^s_j=\varphi_s(Z^j)\31_0^s$ for all $j\in\{0, \dots, N_s-1\}$ and
all $s\in P$, we say that $X$ is \emph{singly generated}. We then write $\31_s:=\31^s_0$.
\end{dfn}

Conditions (2) and (3)  in Definition~\ref{def:finite-type} say that the right Hilbert
$A$-module $X_s$ has an orthonormal basis $\{\31_0^s,\dots , \31_{N_s-1}^s\}$.
Condition (1) in Definition~\ref{def:finite-type} then expresses the fact that this basis also
generates $X_s$ as a left $A$-module, and condition (4) says that these bases for the modules
$X_s$, $s\in P$, are coherent with respect to the multiplication in the product system.
We note that the tensor product of the orthonormal bases $\{\31^s_j\}_{j=0, \dots,N_s-1}$
for $X_s$ and $\{\31^r_k\}_{k=0, \dots ,N_r-1}$ for $X_r$ will be an orthonormal basis for
$X_s\otimes_A X_r$ when $X_r$ is essential, see \cite[Proof of Proposition 4.2]{Lar-Rae}.
Thus, if $X_r$ are essential for all $r\in P$, the maps $\mathfrak{m}_{s,r}$ are bijective for all $r,s\in P$.

Conditions (2) and (3) in Definition~\ref{def:finite-type} also imply that each $\xi\in X_s$
has a unique representation, known as the {\em reconstruction formula}, given by
\begin{equation}\label{ONB}
\xi = \sum_{j=0}^{N_s-1}\31_j^s\cdot\langle\31_j^s, \xi\rangle.
\end{equation}
Note that when $X$ is a product system of finite type, $\{\theta_{\31^s_j,\31^s_j}: j=0, \dots, N_s-1\}$
is a family of mutually orthogonal self-adjoint projections in $\Kk(X_s)$ for every $s\in P$. Since
equation \eqref{ONB} implies that
 \begin{equation}\label{eq:Is-compact}
 I_s:=\sum_{j=0}^{N_s-1}\theta_{\31^s_j,\31^s_j}
 \end{equation}
for every $s\in P$, $I_s\in \Kk(X_s)$ for every $s\in P$, hence $\Ll(X_s)=\Kk(X_s)$, showing that the
left action is by compact operators in every fibre. By \cite[Proposition 5.8]{Fow}, a product system $X$
of finite type is therefore compactly aligned.

\begin{example}\label{ex-affine-toeplitz}
\rm Let $X$  be the product system over $\NN^\times$ with fibers isomorphic to the Toeplitz algebra
$\Tt$ from \cite[\S 6]{BanHLR}. The right action on fibers is implemented by an action $\beta:\NN^\times
\to \operatorname{End}(\Tt)$, and the inner products are defined via an action of
transfer operators for $\beta$.  One can verify that $X$ is associative. Using
\cite[Proposition 6.3]{BanHLR}, one can show that $X$ is of finite type with $N_m=m$ for $m\in \NN^\times$.
In fact, $X$ is even singly generated, where $Z$ is the generating non-unitary isometry in $\Tt$.
\end{example}

\begin{example}\label{ex-additive-toeplitz}
\rm Let $X$ be the product system over $\NN^\times$ with fibers isomorphic to $C(\TT)$ from
\cite[\S 5]{BanHLR} and \cite{HLS}. The right action in each $X_n= C(\TT)$ is implemented by the
endomorphism  $\alpha_n(f):z\mapsto f(z^n)$ of $C(\TT)$ for $n\in \NN^\times$. The inner product is
given by $\langle f,g \rangle_n=L_n(f^*g)$ for the transfer operator $L_n$  naturally associated with
$\alpha_n$. A routine calculation shows that $X$ is associative. That $X$ is singly generated,
with $Z$ the identity function on $\TT$, follows from the results of \cite{HLS} or \cite{BanHLR}.
\end{example}

The following result is the promised converse to Proposition~\ref{prop:KMS-state-on-F}.

\begin{theorem}\label{thm:Laca-12-KMSbeta}
Let $(G, P)$ be a lattice ordered group and $X$ a
compactly aligned product system of finite type over $P$.
Let $\sigma^N$ be the dynamics on $\NT(X)$ arising from an injective homomorphism
$N:G\to (0,\infty)$. Let $0<\beta<\infty$.

If $\phi$ is a tracial state of $\Ff$ such that
\begin{equation}\label{scaling-converse}
\phi(i_s(\31^s_j)yi_r(\31^r_l)^*)=\delta_{s,r}\delta_{j,l}N(s)^{-\beta}\phi(y)
\end{equation}
for all $y\in \Ff$, $s,r\in P$ and $j, l\in\{0, \dots ,N_s-1\}$, then $\omega:=\phi\circ \Phi^\delta$
is a KMS$_\beta$ state of $\NT(X)$.
\end{theorem}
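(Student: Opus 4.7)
The plan is to apply the Bratteli--Robinson criterion: $\omega = \phi\circ\Phi^\delta$ is KMS$_\beta$ provided $\omega(xy) = \omega(y\,\sigma^N_{i\beta}(x))$ holds on a linearly dense family of $\sigma^N$-analytic elements of $\NT(X)$. I would take as this family the monomials $i_s(\xi)i_r(\eta)^*$ for $s,r\in P$, $\xi\in X_s$, $\eta\in X_r$, which are $\sigma^N$-analytic by the preceding lemma. Using the reconstruction formula \eqref{ONB} together with (T3)--(T4), each such monomial refines to a finite sum of terms
\[
i_s(\31^s_j)\,i_e(a)\,i_r(\31^r_l)^*\qquad(a\in A,\;0\le j<N_s,\;0\le l<N_r),
\]
so by multilinearity it suffices to test the identity on pairs $x, y$ of this form. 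For such $x$ one has $\sigma^N_{i\beta}(x) = N(sr^{-1})^{-\beta}x$, and the target identity becomes $\omega(xy) = N(sr^{-1})^{-\beta}\omega(yx)$.

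Two structural observations then streamline the computation. Since $\Phi^\delta$ is an $\Ff$-bimodule map and $\phi|_{\Ff}$ is tracial,
\[
\omega(mT) = \phi(m\,\Phi^\delta(T)) = \phi(\Phi^\delta(T)\,m) = \omega(Tm)
\]
for every $m\in\Ff$ and $T\in\NT(X)$. Hence the factors $i_e(a),i_e(a')$ coming from $i_e(A)\subseteq\Ff$ may be moved freely under $\omega$, and traciality of $\phi|_{i_e(A)}$ allows cycling of $A$-valued coefficients. The remaining crux is to evaluate the middle products $i_r(\31^r_l)^* i_{s'}(\31^{s'}_{j'})$ inside $xy$ and $i_{r'}(\31^{r'}_{l'})^* i_s(\31^s_j)$ inside $yx$ via Nica covariance. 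Setting $q = r\vee s'\in P$, $\alpha = r^{-1}q$, $\gamma = (s')^{-1}q$, I would use the Fock-representation description of $l^*$ from Section~\ref{Prelim-3}, the associativity \eqref{ps-associative}, and the basis compatibility \eqref{dotdefinition} to establish in $\NT(X)$ the identity
\[
i_r(\31^r_l)^* i_{s'}(\31^{s'}_{j'}) = \sum_{(m,n):\,l\cdot m = j'\cdot n} i_\alpha(\31^\alpha_m)\,i_\gamma(\31^\gamma_n)^*,
\]
where the sum runs over index pairs that produce the same basis vector of $X_q$ through the two decompositions $F^{r,\alpha}$ and $F^{s',\gamma}$.

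Combining this expansion with (T3) and condition~(4) of Definition~\ref{def:finite-type} (so $i_s(\31^s_j)i_\alpha(\31^\alpha_m) = i_{s\alpha}(\31^{s\alpha}_{j\cdot m})$) rewrites both $xy$ and $yx$ as finite linear combinations of canonical monomials $i_u(\31^u_p)\,i_e(\tilde a)\,i_v(\31^v_{p'})^*$, to each of which the scaling identity \eqref{scaling-converse} assigns the value $\delta_{u,v}\delta_{p,p'}\,N(u)^{-\beta}\phi(i_e(\tilde a))$. The multiplicativity of $N$ (in particular $N(s\alpha)=N(s)N(\alpha)$), together with the identities $r\alpha = q = s'\gamma$, produces the prefactor $N(sr^{-1})^{-\beta}$ relating $\omega(xy)$ and $\omega(yx)$; the trace property of $\phi$ on $i_e(A)$ rearranges the resulting $A$-coefficients into matching form.

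The main obstacle is establishing the middle-product expansion rigorously in $\NT(X)$ itself, not merely in its Fock image $l_\ast(\NT(X))$, and then performing the combinatorial matching of indices on both sides of the KMS identity after all substitutions are carried out. Once this is in place, the scaling identity \eqref{scaling-converse} together with the tracial property of $\phi$ deliver the KMS condition without further hurdle.
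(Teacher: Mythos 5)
Your reduction to monomials and the statement of the identity to be verified are fine, but the proposal leaves a gap exactly where the proof lives, and the route you sketch around it would not close under the stated hypotheses. First, the middle-product expansion $i_r(\31^r_l)^*i_{s'}(\31^{s'}_{j'})=\sum_{l\cdot m=j'\cdot n}i_\alpha(\31^\alpha_m)i_\gamma(\31^\gamma_n)^*$ presupposes that every basis vector of $X_q$, $q=r\vee s'$, is of the form $\31^r_k\cdot\31^\alpha_m$, i.e.\ that the maps $\mathfrak{m}_{s,r}$ are bijective; Theorem~\ref{thm:Laca-12-KMSbeta} does not assume this (the paper only imposes bijectivity from Section~\ref{section:fromAtoNTX} onwards, where the analogous formula \eqref{Neals-precise-formula} is established). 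Second, and more seriously: once you expand $xy$ and $yx$ into canonical monomials and apply \eqref{scaling-converse} to each term, both sides collapse to sums of values $\phi(i_e(\tilde a))$, so from that point on only the trace property of $\phi$ on $i_e(A)$ is doing any work. The matching of the two resulting index sets is then precisely the delicate combinatorics that the paper needs the extra hypotheses ``$\mathfrak{m}_{s,r}$ bijective and respecting co-prime pairs'' to carry out in Theorem~\ref{thm:from-trace-A-to-trace-F}; asserting that it goes through ``without further hurdle'' under the weaker hypotheses of Theorem~\ref{thm:Laca-12-KMSbeta} is exactly the unproved content, and there is no reason to expect it to hold there.

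The paper's proof avoids both problems by exploiting the assumed traciality of $\phi$ on all of $\Ff$ --- an assumption your computation never uses beyond $i_e(A)$. It invokes Fowler's Proposition 5.10 only qualitatively, to see that $\omega(y_1y_2)=0$ unless $sg=rh$, in which case $y_1y_2\in\Ff$; it then writes $N(r)^{-\beta}\omega(y_1y_2)\delta_{i,j}=\phi\bigl(i_r(\31^r_i)y_1y_2i_r(\31^r_j)^*\bigr)$ using \eqref{scaling-converse}, inserts $1=i_s(\31^s_\alpha)^*i_s(\31^s_\alpha)$ between $y_1$ and $y_2$ so that the conjugated product factors as an element of $B_{rs}$ times an element of $B_{sg}$, swaps these two core elements using the trace property of $\phi$ on $\Ff$, and applies \eqref{scaling-converse} once more with $\31^s_\alpha$ to obtain $N(s)^{-\beta}\phi(y_2y_1)$; comparing the two scaling factors gives $N(sr^{-1})^{-\beta}$, with no expansion of the middle product ever needed. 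To salvage your approach you would have to either import the hypotheses of Theorem~\ref{thm:from-trace-A-to-trace-F}, or restructure the argument so that traciality on $\Ff$ (not merely on $A$) performs the index matching --- which is what the insertion-and-swap trick accomplishes.
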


\begin{proof}
Let $\phi$ be a tracial state on $\Ff$ satisfying the scaling identity \eqref{scaling-converse}.
Let $\omega=\phi\circ \Phi^\delta$. It suffices to verify the KMS$_\beta$ condition for $\omega$
on  $\sigma^N$-analytic elements $y_1=i_s(\varphi_s(a)\31^s_l)i_r(\varphi_r(b)\31^r_k)^*$
and $y_2=i_g(\varphi_g(a')\31^g_n)i_h(\varphi_h(b')\31^h_m)^*$ from the spanning set
of $\NT(X)$, where $l,k=0, \dots ,N_s-1$ and $n,m=0, \dots, N_r-1$. We must prove that
\begin{equation}\label{get-omega-KMS}
\omega(y_1y_2)=N(sr^{-1})^{-\beta}\omega(y_2y_1).
\end{equation}
By \cite[Proposition 5.10]{Fow}, the element $i_r(\varphi_r(b)\31^r_k)^*i_g(\varphi_g(a')\31^g_n)$
can be approximated from the span of elements of the form
$i_{r^{-1}(r\vee g)}(\xi)i_{g^{-1}(r\vee g)}(\eta)^*$, and so the definition of $\Phi^\delta$ in
\eqref{def:condexp} implies that $\omega(y_1y_2)=0$ unless $sr^{-1}(r\vee g)(g^{-1}(r\vee g)h)^{-1}=e$,
or equivalently, unless $sr^{-1}gh^{-1}=e$ in $G$.

Thus we assume $sg=rh$, and we therefore have $y_1y_2\in \Ff$. The scaling identity
\eqref{scaling-converse} implies that
\begin{equation}\label{omega-from-left}
N(r)^{-\beta}\omega(y_1y_2)\delta_{i,j}=\phi(i_r(\31^r_i)y_1y_2i_r(\31^r_j)^*).
\end{equation}
Next use  property (1) in Definition~\ref{def:finite-type} to write
\begin{align*}
i_r(\31^r_i)i_e(a)
&=\sum_{\nu=0}^{N_{r}-1}i_r(\varphi_r(a_\nu)\31^r_\nu)=
\sum_{\nu=0}^{N_{r}-1}i_e(a_\nu)i_r(\31^r_\nu)\text{ and}\\
i_r(\31^r_j)i_e(b')&= \sum_{\mu=0}^{N_{r}-1} i_r(\varphi_r(b_\mu)\31^r_\mu)=
\sum_{\mu=0}^{N_{r}-1} i_e(b_\mu)i_r(\31^r_\mu).
\end{align*}
Then the term under $\phi$ in the right-hand side of \eqref{omega-from-left} is a product of
\begin{equation}
i_r(\31^r_i)y_1
=\sum_{\nu=0}^{N_{r}-1}i_e(a_\nu)i_{rs}(\31^{rs}_{\nu\cdot l})
i_r(\31^r_k)^*i_e(b)^*\label{E1-almost-there}
\end{equation}
and
\begin{equation}
y_2i_r(\31^r_j)^*
=i_e(a')i_g(\31^g_n)\left( \sum_{\mu=0}^{N_{r}-1} i_e(b_\mu)i_{rh}(\31^{rh}_{\mu\cdot m})\right)^*.
\label{E2-almost-there}
\end{equation}
Fix $\alpha$ in $\{0,\dots ,N_s-1\}$. Inserting $i_s(\31^s_\alpha)^*i_s(\31^s_\alpha)=
\langle \31^s_\alpha,\31^s_\alpha\rangle_s=1$ between $y_1$ and $y_2$, and using the fact
that $rh=sg$ we conclude that $i_r(\31^r_i)y_1y_2i_r(\31^r_j)^*$ can be split as the product of
two terms in $\Ff$, one belonging to $B_{rs}$ and the other to $B_{sg}$. Applying the trace
property of $\phi$ on $\Ff$ and the scaling identity \eqref{scaling-converse} with $\31^s_\alpha$
shows, via \eqref{E1-almost-there} and \eqref{E2-almost-there}, that the right-hand side of
\eqref{omega-from-left} is equal to
\begin{equation}\label{eq:more-on-phi}
N(s)^{-\beta}\phi\left(i_e(a')i_g(\31^g_n)R^*Q i_r(\31^r_k)^*i_e(b)^*\right),
\end{equation}
where  $R= \sum_{\mu=0}^{N_{r}-1} i_e(b_\mu)i_{rh}(\31^{rh}_{\mu\cdot m})$ and
$Q=\sum_{\nu=0}^{N_{r}-1}i_e(a_\nu)i_{rs}(\31^{rs}_{\nu\cdot l})$ are products of linear combinations
in $X_{rh}$ and $X_{rs}$. Since
\begin{align}
R^*Q & = \left( \sum_{\mu=0}^{N_{r}-1} i_e(b_\mu)i_{rh}(\31^{rh}_{\mu\cdot m})\right)^*
\left(\sum_{\nu=0}^{N_{r}-1}i_e(a_\nu)i_{rs}(\31^{rs}_{\nu\cdot l})\right)\notag\\
&=\left( \sum_{\mu=0}^{N_{r}-1} i_e(b_\mu)i_{r}(\31^{r}_\mu)i_h(\31^h_m)\right)^*
\left(\sum_{\nu=0}^{N_{r}-1}i_e(a_\nu)i_{r}(\31^{r}_{\nu})i_s(\31^s_l)\right)\notag\\
&=i_h(\31^h_m)^* \left( \sum_{\mu=0}^{N_{r}-1} i_e(b_\mu)i_{r}(\31^{r}_\mu)\right)^*
\left(\sum_{\nu=0}^{N_{r}-1}i_e(a_\nu)i_{r}(\31^{r}_{\nu})\right)i_s(\31^s_l)\notag\\
&=i_h(\31^h_m)^* (i_r(\31^r_j)i_e(b'))^*i_r(\31^r_i)i_e(a) i_s(\31^s_l)\notag\\
&=i_h(\31^h_m)^* i_e(b')^* i_r(\31^r_j)^*i_r(\31^r_i)i_e(a) i_s(\31^s_l),\label{eq:even-more-on-phi}
\end{align}
we get $\phi(i_r(\31^r_i)y_1y_2i_r(\31^r_j)^*)=N(s)^{-\beta}\phi(y_2i_e(\langle \31^r_j,\31^r_i\rangle_r) y_1)$.
Hence formula \eqref{omega-from-left} now takes the form
$$ N(r)^{-\beta}\omega(y_1y_2)\delta_{i,j}=N(s)^{-\beta}\phi(y_2i_e(\langle \31^r_j,\31^r_i\rangle_r) y_1).  $$
Both sides are equal to $0$ when $i\neq j$, and with $i=j$ we have
$$
\omega(y_1y_2)=N(r)^\beta \phi(i_r(\31^r_i)y_1y_2 i_r(\31^r_i)^*)=N(sr^{-1})^{-\beta}\omega(y_2y_1),
$$
which is exactly our claim \eqref{get-omega-KMS}. This completes the proof of the theorem.
\end{proof}

Since the trace property is preserved under weak$^*$-limits we obtain the following necessary condition
on a ground state to be a KMS$_\infty$ state.

\begin{cor}\label{characterise-KMSinfty}
Assume the hypotheses of Theorem~\ref{thm:Laca-12-KMSbeta}, and suppose $N(s)>1$ for $s>e$.
If $\omega$ is a KMS$_\infty$ state of $\NT(X)$, then $\omega$ restricts to a tracial
state of $\Ff$.
\end{cor}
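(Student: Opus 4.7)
The plan is to unwind the definition of KMS$_\infty$ and apply Proposition~\ref{prop:KMS-state-on-F} together with the fact that traciality is a weak$^*$-closed condition. By the Connes--Marcolli definition recalled earlier, a KMS$_\infty$ state $\omega$ of $(\NT(X),\sigma^N)$ is the weak$^*$-limit of some net $(\omega_i)$ of KMS$_{\beta_i}$ states with $\beta_i\to\infty$. Since each $\beta_i\in(0,\infty)$ and the dynamics $\sigma^N$ comes from the injective homomorphism $N$, Proposition~\ref{prop:KMS-state-on-F} applies to every $\omega_i$, so each restriction $\omega_i\!\upharpoonright_{\Ff}$ is a tracial state on $\Ff$.

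Next I would pass traciality to the limit. For any $a,b\in\Ff$ the identity $\omega_i(ab)=\omega_i(ba)$ holds for every $i$. Because weak$^*$-convergence on $\NT(X)$ is pointwise convergence of functionals and $\NT(X)$ is unital, taking $i\to\infty$ yields $\omega(ab)=\omega(ba)$, while the limit of a net of states on a unital $C^*$-algebra is again a state. Hence $\omega\!\upharpoonright_{\Ff}$ is a positive unital linear functional satisfying the trace identity on $\Ff$, which is exactly the claim.

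There is no real obstacle: the only content is that the trace equation is a pointwise condition on $\Ff\times\Ff$ and so survives pointwise limits. The standing hypothesis $N(s)>1$ for $s>e$ does not enter this specific argument; it appears to be included in order to align the present statement with the ground-state framework of Theorem~\ref{thm:Laca-12-ground}, where the same condition characterises ground states via vanishing on the subalgebras $B_s$ with $s>e$, and hence to make the resulting KMS$_\infty$ states simultaneously ground states. I would record this observation in a brief remark after the proof rather than use it inside the argument.
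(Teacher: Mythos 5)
Your argument is correct and is exactly the paper's (one-line) justification: the paper derives the corollary from the remark that the trace property is preserved under weak$^*$-limits, combined with Proposition~\ref{prop:KMS-state-on-F} applied to the approximating KMS$_{\beta_i}$ states. Your side observation that the hypothesis $N(s)>1$ for $s>e$ is not used in the traciality argument is also accurate.
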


%%%%%%%%%%%%%%%%%%%%%%%%%%%%%%%%%%%%%%%%%%%%%%%%%%%
%%%%%%%%%%%%%%%%%%%%%%%%%%%%%%%%%%%%%%%%%%%%%%%%%%%

\section{From traces on $A$ to KMS$_\beta$ states on $\NT(X)$}\label{section:fromAtoNTX}

 In the first part of this section, we are going to simplify the characterization of KMS$_\beta$ states on $\NT(X)$ given by
the requirements in Theorem~\ref{thm:Laca-12-KMSbeta} in two steps. Firstly, we push the scaling
condition \eqref{scaling-converse} down from $\Ff$ to a scaling condition on elements in $i_e(A)=B_e$.
Secondly, under a suitable condition on the $\mathfrak{m}_{s,r}$, we show that the tracial property of $\phi$
on  $\Ff$  is automatically fulfilled as long as it  holds on $B_e$. The second part of this section contains
our construction of ground states and KMS$_\beta$ states from tracial states on the coefficient algebra $A$,
valid above a certain critical value of $\beta$.

%%%%%%%%%%%%%%%%%%%%%%%%%%%%%%%%%%%%%%%%%%%%%%

\subsection{A simplification of the scaling condition for KMS$_\beta$ states.}
\begin{lemma}\label{restrictiontoA}
Let $X$ be a compactly aligned product system over $P$ with the coefficient algebra $A$. Let $\omega$
be a state on $\NT(X)$ and $\phi$ be its restriction to the core subalgebra $\Ff$.

(a) If $\omega$ is a KMS$_\beta$ state ($\beta>0$) for the dynamics $\sigma^N$ arising from
an injective homomorphism $N$ as in (\ref{def:sigmaN}),
then for $\xi\in X_s$, $\eta\in X_r$,  we have
\begin{equation}\label{phixieta}
\omega(i_s(\xi)i_r(\eta)^*) = \delta_{s,r}N(s)^{-\beta}\omega(i_e(\langle \eta,\xi\rangle_s)).
\end{equation}
for $s,r\in P$. Thus the restriction map $\omega\mapsto\omega\restriction_{i_e(A)}$ from the
KMS$_\beta$ states on $\NT(X)$ to traces on $i_e(A)$ is injective.

(b) In addition, assume that $X$ is of finite type. Then $\omega$ is a KMS$_\beta$ state ($\beta>0$)
for the dynamics $\sigma^N$ if and only if $\phi$ is a tracial state on $\Ff$ such that
$\omega=\phi\circ\Phi^\delta$ and
\begin{equation} \label{fintypeKMScond}
\phi(i_s(\31_j^s)i_e(a)i_s(\31_j^s)^*)=N(s)^{-\beta}\phi(i_e(a))
\end{equation}
for all $s\in P$, $j=0,\ldots,N_s-1$, and $a\in A$.
\end{lemma}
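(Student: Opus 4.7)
For part (a), the plan is to apply the KMS$_\beta$ condition twice to a spanning element $i_s(\xi)i_r(\eta)^*$. A first application, using $\sigma^N_{i\beta}(i_s(\xi))=N(s)^{-\beta}i_s(\xi)$, gives $\omega(i_s(\xi)i_r(\eta)^*)=N(s)^{-\beta}\omega(i_r(\eta)^*i_s(\xi))$; a second application with the order reversed, together with $\sigma^N_{i\beta}(i_r(\eta)^*)=N(r)^{\beta}i_r(\eta)^*$, gives $\omega(i_r(\eta)^*i_s(\xi))=N(r)^{\beta}\omega(i_s(\xi)i_r(\eta)^*)$. Combining the two and using injectivity of $N$ forces $\omega(i_s(\xi)i_r(\eta)^*)=0$ whenever $s\neq r$; when $s=r$, the Toeplitz relation~(T4) inside the first identity produces formula~(\ref{phixieta}). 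Injectivity of the restriction map to $i_e(A)$ is then automatic, since (\ref{phixieta}) expresses the value of a KMS$_\beta$ state on every monomial $i_s(\xi)i_r(\eta)^*$ in terms of its restriction to $i_e(A)$, and these monomials span a dense subspace of $\NT(X)$.

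For part (b), the forward direction is a specialisation of what is already available: Proposition~\ref{prop:KMS-state-on-F} produces the factorisation $\omega=\phi\circ\Phi^\delta$ with $\phi$ tracial on $\Ff$ satisfying (\ref{scaling}), and evaluating (\ref{scaling}) at $\xi=\eta=\31^s_j$ (so that $\langle\eta,\xi\rangle_s=1$) and $y=i_e(a)$ immediately produces~(\ref{fintypeKMScond}). For the converse, the plan is to verify the stronger scaling identity~(\ref{scaling-converse}) of Theorem~\ref{thm:Laca-12-KMSbeta} and then invoke that theorem. Two ingredients drive the reduction. First, (\ref{eq:core_as_compacts}) combined with the reconstruction formula~(\ref{ONB}) shows that $\Ff$ is densely spanned by monomials of the form $y=i_t(\31^t_k)i_e(c)i_t(\31^t_m)^*$ with $t\in P$, $k,m\in\{0,\dots,N_t-1\}$ and $c\in A$. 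Second, for each such $t,k$, the element $p^t_k:=i_t(\31^t_k)i_t(\31^t_k)^*$ is a projection in $\Ff$, and the orthonormality relation in Definition~\ref{def:finite-type}(3) together with~(T4) yields $p^t_k y=y$ and $y p^t_k=\delta_{k,m}\,i_t(\31^t_k)i_e(c)i_t(\31^t_k)^*$; the tracial property of $\phi$ then forces $\phi(y)=\delta_{k,m}N(t)^{-\beta}\phi(i_e(c))$ after applying~(\ref{fintypeKMScond}).

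With these ingredients in hand, (\ref{scaling-converse}) is checked on such $y$ by first computing $i_s(\31^s_j)yi_r(\31^r_l)^*=i_{st}(\31^{st}_{j\cdot k})i_e(c)i_{rt}(\31^{rt}_{l\cdot m})^*$ via~(T3) and~(\ref{dotdefinition}); the conditional expectation $\Phi^\delta$ annihilates the element when $s\neq r$ by cancellativity of $P$, handling the off-diagonal case. In the diagonal case $s=r$, repeating the projection-and-trace argument at level $st$ reduces the left-hand side to $\delta_{j\cdot k,\,l\cdot m}N(st)^{-\beta}\phi(i_e(c))$, while the right-hand side of~(\ref{scaling-converse}) computes to $\delta_{j,l}\delta_{k,m}N(s)^{-\beta}N(t)^{-\beta}\phi(i_e(c))$; the two coincide by multiplicativity of $N$ and by coherence of the orthonormal bases with the product, which in the essential case (where $\mathfrak{m}_{s,t}$ is bijective) gives $\delta_{j\cdot k,\,l\cdot m}=\delta_{j,l}\delta_{k,m}$. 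The main technical point I anticipate is the bookkeeping with the multiplication maps $\mathfrak{m}_{s,t}$ in the product system; once these are tracked carefully, the argument reduces to a clean combination of the trace property and the scaling hypothesis.
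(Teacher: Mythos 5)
Your proof is correct and follows essentially the same route as the paper: part (a) is exactly the double application of the KMS$_\beta$ condition underlying Proposition~\ref{prop:KMS-state-on-F}, and for part (b) the paper likewise reduces \eqref{scaling-converse} to spanning monomials $i_t(\31^t_k)i_e(c)i_t(\31^t_m)^*$, kills the off-diagonal terms by cycling the projections $i_t(\31^t_k)i_t(\31^t_k)^*$ through the trace, and matches the two sides via $N(st)=N(s)N(t)$ and $\delta_{j\cdot k,\,l\cdot m}=\delta_{j,l}\delta_{k,m}$ before invoking Theorem~\ref{thm:Laca-12-KMSbeta}. The only cosmetic remark is that your hedge about $\mathfrak{m}_{s,t}$ being bijective is unnecessary: the identity $\delta_{j\cdot k,\,l\cdot m}=\delta_{j,l}\delta_{k,m}$ already follows from $F^{s,t}$ preserving inner products together with Definition~\ref{def:finite-type}(1), which forces each fibre to be essential.
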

\begin{proof}
(a) Equality (\ref{phixieta}) follows immediately from Proposition~\ref{prop:KMS-state-on-F}.
It shows that a KMS$_\beta$ state is uniquely determined by its values on the coefficient algebra $A$.

(b) Assume $\phi$ is a tracial state on $\Ff$ which satisfies \eqref{fintypeKMScond}. We will show that
$\phi$ satisfies \eqref{scaling-converse} for all $y\in \Ff$. First, for $s\in P$,  $i, m\in\{0,\ldots,N_s-1\}$ and
$y\in \Ff$ we have
$$ \phi(i_s(\31_i^s)yi_s(\31_m^s)^*) = \phi(i_s(\31_m^s)i_s(\31_m^s)^*i_s(\31_i^s)yi_s(\31_m^s)^*) =0 $$
if $i\neq m$, by Definition \ref{def:finite-type} (3). Thus for
$y=i_r(\31_j^r)i_e(a)(i_r(\31_k^r) i_e(b))^*$ in $\Ff$,  where $j,k\in\{0,\ldots,N_r-1\}$ and $a,b\in A$, we have
\begin{align}
 \phi(i_s(\31_i^s) y i_s(\31_m^s)^*)
 &= \delta_{i,m}\phi(i_s(\31_i^s)i_r(\31_j^r) i_e(ab^*)
(i_s(\31_i^s)i_r(\31_k^r))^*)\notag\\
&=\delta_{i,m} \phi(i_{sr}(\31^{sr}_{i\cdot j})i_e(ab^*)i_{sr}(\31^{sr}_{i\cdot k})^*)\notag\\
&= \delta_{i,m}\delta_{i\cdot j, i\cdot k}N(sr)^{-\beta}\phi(i_e(ab^*))\;\;\;
\text{ by }\eqref{fintypeKMScond}\notag\\
&=\delta_{i,m}\delta_{j, k}N(sr)^{-\beta}\phi(i_e(ab^*))\notag\\
&=\delta_{i,m}N(s)^{-\beta}\phi(i_r(\31^r_j)i_e(ab^*)i_r(\31^r_k)^*)\;\;\;
 \text{ by }\eqref{fintypeKMScond}\notag\\
& = \delta_{i,m}N(s)^{-\beta}\phi(y), \notag
\end{align}
which proves \eqref{scaling-converse} for all such $y$. Since
an arbitrary spanning element $y$ in $\Ff$ is a linear combination of elements
$i_r(\31_j^r)i_e(a)(i_r(\31_k^r) i_e(b))^*$ by  Definition \ref{def:finite-type}(2), the scaling condition
\eqref{scaling-converse} is valid for all $y\in \Ff$.
Theorem~\ref{thm:Laca-12-KMSbeta} implies therefore that $\omega$ is a KMS$_\beta$ state
of $\NT(X)$.

The reverse implication is an immediate consequence of Theorem \ref{thm:Laca-12-KMSbeta}.
\end{proof}

%%%%%%%%%%%%%%%%%%
%
%
%

\begin{lemma}\label{lem:scaling-trace-on-A}
Let $X$ be a product system of finite type over $P$ with the coefficient algebra $A$.
Let $\phi$ be a functional on $\NT(X)$. If $\phi$ satisfies
\begin{equation}\label{fintypeKMScond-left-action}
\phi(i_e(c)i_s(\31_j^s)i_s(\31^s_k)^*i_e(d)^*)=N(s)^{-\beta}\phi(i_e(\langle \varphi_s(d)\31^s_k,
\varphi_s(c)\31^s_j\rangle))
\end{equation}
for all $c,d\in A$, $s\in P$ and $j,k\in \{0, \dots ,N_{s}-1\}$, then $\phi$ satisfies
\begin{equation}\label{fintypeKMScond-prime}
\phi(i_s(\31_j^s)i_e(a)i_s(\31_l^s)^*)=\delta_{j,l}N(s)^{-\beta}\phi(i_e(a))
\end{equation}
for all $s\in P$, $j,l=0,\ldots,N_s-1$ and $a\in A$.

Conversely, if $\phi$ satisfies \eqref{fintypeKMScond-prime} and $\phi\circ i_e$ is a trace on $A$,
then $\phi$ satisfies \eqref{fintypeKMScond-left-action}.
\end{lemma}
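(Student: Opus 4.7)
The plan is to prove both implications by direct algebraic manipulation, using the Toeplitz relations, the reconstruction formula \eqref{ONB}, and condition (1) of Definition~\ref{def:finite-type}.

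For the forward implication, I would first observe that after applying the Toeplitz identities $i_e(c)i_s(\31^s_j)=i_s(\varphi_s(c)\31^s_j)$ and $i_s(\31^s_k)^*i_e(d)^* = i_s(\varphi_s(d)\31^s_k)^*$, the hypothesis \eqref{fintypeKMScond-left-action} becomes
$$\phi(i_s(\xi)i_s(\eta)^*)=N(s)^{-\beta}\phi(i_e(\langle\eta,\xi\rangle_s))$$
whenever $\xi=\varphi_s(c)\31^s_j$ and $\eta=\varphi_s(d)\31^s_k$ for some $c,d\in A$ and basis indices. By linearity of $\phi$ together with the left-generating property (1) of Definition~\ref{def:finite-type}, this identity extends to arbitrary $\xi,\eta\in X_s$. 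Applying it with $\xi=\31^s_j\cdot a$ (using $i_s(\31^s_j)i_e(a)=i_s(\31^s_j\cdot a)$) and $\eta=\31^s_l$, and computing $\langle\31^s_l,\31^s_j\cdot a\rangle_s=\langle\31^s_l,\31^s_j\rangle_s\,a=\delta_{j,l}\,a$ by orthonormality and right-action compatibility of the inner product, then produces \eqref{fintypeKMScond-prime} immediately.

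For the converse, I would use the reconstruction formula \eqref{ONB} to write $\varphi_s(c)\31^s_j=\sum_{m}\31^s_m\cdot\langle\31^s_m,\varphi_s(c)\31^s_j\rangle_s$ and similarly for $\varphi_s(d)\31^s_k$. Inserting these into the left-hand side of \eqref{fintypeKMScond-left-action}, the product $i_e(c)i_s(\31^s_j)i_s(\31^s_k)^*i_e(d)^*$ is rewritten as a double sum indexed by $m,n$, each term being of the form $i_s(\31^s_m)\,i_e(x_{m,n})\,i_s(\31^s_n)^*$ with $x_{m,n}=\langle\31^s_m,\varphi_s(c)\31^s_j\rangle_s\langle\varphi_s(d)\31^s_k,\31^s_n\rangle_s\in A$. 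Applying \eqref{fintypeKMScond-prime} collapses the sum to the diagonal $m=n$ and scales by $N(s)^{-\beta}$, giving
$$N(s)^{-\beta}\sum_{m}\phi\bigl(i_e(\langle\31^s_m,\varphi_s(c)\31^s_j\rangle_s\langle\varphi_s(d)\31^s_k,\31^s_m\rangle_s)\bigr).$$
Invoking the trace property of $\phi\circ i_e$ on $A$ allows me to swap the two factors inside each $i_e(\cdot)$, after which the identity $\sum_m\langle x,\31^s_m\rangle_s\langle\31^s_m,y\rangle_s=\langle x,y\rangle_s$ (a reformulation of \eqref{ONB}) reassembles the remaining sum into $\langle\varphi_s(d)\31^s_k,\varphi_s(c)\31^s_j\rangle_s$, recovering the right-hand side of \eqref{fintypeKMScond-left-action}.

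The crux of the argument is the use of the trace property in the converse direction: the two $A$-valued inner-product factors do not individually commute in the noncommutative algebra $A$, so without the trace hypothesis they cannot be interchanged, and the reassembly into a single inner product via the reconstruction formula would fail. The forward direction, by contrast, is essentially a one-line application of the Toeplitz relations and orthonormality.
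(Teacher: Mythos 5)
Your proof is correct and follows essentially the same route as the paper: the forward direction rests on decomposing $i_s(\31^s_j)i_e(a)=i_s(\rho_s(a)\31^s_j)$ via Definition~\ref{def:finite-type}(1) and orthonormality, and the converse expands $\varphi_s(c)\31^s_j$ and $\varphi_s(d)\31^s_k$ in the orthonormal basis (your use of \eqref{ONB} just makes the coefficients of the paper's Definition~\ref{def:finite-type}(2) decomposition explicit) before invoking the trace property to swap the two inner-product factors. Your closing remark correctly identifies where the trace hypothesis is indispensable.
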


\begin{proof} Assume that identity \eqref{fintypeKMScond-left-action} holds. For $a\in A$, $s\in P$ and
$j, l=0, \dots ,N_s-1$, write $\rho_s(a)\31^s_j=
\sum_{\mu=0}^{N_s-1}\varphi_s(a_\mu)\31^s_\mu$. Then
\begin{align*}
\phi(i_s(\31_j^s)i_e(a)i_s(\31_l^s)^*)
&=N(s)^{-\beta}\sum_{\mu}\phi(i_e(\langle \31^s_l, \varphi_s(a_\mu)\31^s_\mu\rangle))\\
&=N(s)^{-\beta}\phi(i_e(\langle \31^s_l, \rho_s(a)\31^s_j\rangle))\\
&=\delta_{j,l}N(s)^{-\beta}\phi(i_e(a)),
\end{align*}
as claimed in \eqref{fintypeKMScond-prime}.
Now suppose $\phi$ restricts to a trace on $i_e(A)$ and satisfies \eqref{fintypeKMScond-prime}. Let
$y=i_s(\varphi_s(c)\31^s_j)i_s(\varphi_s(d)\31^s_k)^*\in \Ff$, and
express $\varphi_s(c)\31^s_j=\sum_{\nu=0}^{N_s-1}\rho_s(c_\nu)\31^s_\nu$  and $\varphi_s(d)\31^s_k=\sum_{i=0}^{N_s-1}\rho_s(d_i)\31^s_i$.  Then
\begin{align*}
\phi(y)
&=\sum_\nu\sum_i\phi\bigl(i_s(\31_\nu^s)i_e(c_\nu(d_i)^*)i_s(\31^s_i)^*\bigr)\\
&=N(s)^{-\beta}\sum_{\nu,i}\delta_{\nu,i}\phi \bigl(i_e(c_\nu(d_i)^*)\bigr)\\
&=N(s)^{-\beta}\phi(\sum_\nu(i_e((d_\nu)^*c_\nu))),
\end{align*}
because $\phi\circ i_e$  is a trace.  This last term is, by the choice of $c_\nu$ and $d_\nu$, equal
to $N(s)^{-\beta}\phi(i_e(\langle \varphi_s(d)\31^s_k, \varphi_s(c)\31^s_j\rangle))$,
giving \eqref{fintypeKMScond-left-action}.
\end{proof}

\begin{rmk}\label{rmk:like-eq-8-2-LR}
\rm Under the hypothesis of part (b) of Lemma \ref{restrictiontoA}, the condition
$$
\phi(i_e(a)i_s(\31_j^s)i_s(\31_k^s)^*i_e(b^*)) =N(s)^{-\beta}\phi(i_e(\langle\31_k^s,\varphi_s(b^*a)\31_j^s\rangle)
$$
for all $s\in P$, $j,k\in\{0,\ldots,N_s-1\}$, $a,b\in A$,  is similar to \cite[equation (8.2)]{Lac-Rae2} in the
case of the product system over $\NN^\times$ from Example~\ref{ex-affine-toeplitz}.
\end{rmk}

\medskip
Given a tracial state on $i_e(A)$ satisfying the scaling identity (\ref{fintypeKMScond-prime}), formula (\ref{phixieta}) may
be used to extend it to a state on $\NT(X)$. The question remains if such a state is tracial on $\Ff$.
We examine this issue next. The following definition involves functions $\mathfrak{m}_{s,r}$ introduced in
Definition \ref{def:finite-type}.

\begin{dfn} Let $X$ be  an associative product system of finite type over $P$. We say the functions
$\mathfrak{m}_{s,r}$ \emph{respect co-prime pairs} if the following condition holds: for all $s,r\in P$ such that
$s\wedge r=e$, all $j,g,h\in \{0, \dots, N_s-1\}$ and all $l,m,n\in \{0, \dots ,N_r-1\}$ we have
\begin{equation}\label{cond-m}
\mathfrak{m}_{s,r}(j,m) =\mathfrak{m}_{r,s}(l, g)\text{ and }\mathfrak{m}_{s,r}(j,n) =
\mathfrak{m}_{r,s}(l, h) \;\; \Rightarrow \;\; m=n\text{ and }g=h.
\end{equation}
\end{dfn}

\begin{rmk}
\rm{For the product systems from Examples~\ref{ex-affine-toeplitz} and \ref{ex-additive-toeplitz} the maps $\mathfrak{m}_{s,r}$
respect co-prime pairs in the sense of \eqref{cond-m}. We only show this in the case of Example~\ref{ex-affine-toeplitz} because the argument is similar for the second example. We have $P=\NN^\times$ and $N_m=m$ for $m\in \NN^\times$.
Suppose that $s,r$ are co-prime integers. Since the product of $X_s$ with $X_r$ is implemented by
the endomorphism that raises the generating isometry $S$ to the power $s$, it follows that
$\31^{sr}_{j\cdot k}=\31^{sr}_{j+sk}$ for all $j\in \{0, \dots, N_s-1\}$ and $k\in \{0, \dots ,N_r-1\}$.
Assume $\mathfrak{m}_{s,r}(j,m) =\mathfrak{m}_{r,s}(l, g)$ and $\mathfrak{m}_{s,r}(j,n) =
\mathfrak{m}_{r,s}(l, h)$, where $j,g,h\in \{0, \dots, N_s-1\}$ and $l,m,n\in \{0, \dots ,N_r-1\}$. Then
$j-l=rh-sn=rg-sm$, and therefore $r(h-g)=s(n-m)$. Then necessarily $h=g$ and $n=m$, as required.
}
\end{rmk}

\begin{theorem}\label{thm:from-trace-A-to-trace-F}
Let $X$ be an associative product system of finite type over $P$ such that $\mathfrak{m}_{s,r}$
are bijective for all $s,r\in P$ and respect co-prime pairs.

 If $\phi$ is a state of $\Ff$ such that  $\phi\upharpoonright_{i_e(A)}$ is
 a trace and for some $0<\beta <\infty$ we have
 \begin{equation}\label{fintypeKMScond-jl}
 \phi(i_s(\31^s_j)i_e(a)i_s(\31^s_l)^*)=\delta_{j,l}N(s)^{-\beta}\phi(i_e(a))
 \end{equation}
 for all $a\in A$, $s\in P$, $j,l=0,\dots ,N_s-1$, then $\phi$ is a trace on $\Ff$. In particular, $\phi\circ \Phi^\delta$ is a KMS$_\beta$-state of $\NT(X)$.
\end{theorem}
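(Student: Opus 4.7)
My plan is to prove that $\phi$ is a trace on $\Ff$ by a direct calculation on a spanning family, and then to invoke Theorem~\ref{thm:Laca-12-KMSbeta} (equivalently, Lemma~\ref{restrictiontoA}(b)) to deduce the KMS$_\beta$ property. Since $\Ff$ is the closed linear span of elements $i_s(\xi)i_s(\eta)^*$ with $\xi,\eta\in X_s$, $s\in P$, the finite-type structure of $X$ lets us reduce to verifying $\phi(xy)=\phi(yx)$ for monomials of the form
\[
x=i_s(\31^s_j)i_e(a)i_s(\31^s_k)^*,\qquad y=i_r(\31^r_l)i_e(b)i_r(\31^r_m)^*,
\]
with $s,r\in P$, $a,b\in A$, and the indices in the relevant basis ranges.

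First, the converse direction of Lemma~\ref{lem:scaling-trace-on-A}, applied to the hypothesis of the theorem, yields identity~\eqref{fintypeKMScond-left-action}, which computes $\phi$ on such monomials in terms of $\phi\circ i_e$. The core algebraic step is then to expand the ``middle'' product $i_s(\31^s_k)^*i_r(\31^r_l)$ appearing when $xy$ is multiplied out. Setting $s_1=s^{-1}(s\vee r)$ and $r_1=r^{-1}(s\vee r)$, both in $P$ by the lattice assumption and satisfying $s_1\wedge r_1=e$, I would use Nica covariance, associativity of the product system, and the bijectivity of $\mathfrak{m}_{s,s_1}$ and $\mathfrak{m}_{r,r_1}$ to obtain
\[
i_s(\31^s_k)^*i_r(\31^r_l)=\sum_{\mu}i_{s_1}(\31^{s_1}_{\nu_*(\mu)})i_{r_1}(\31^{r_1}_{\mu})^*,
\]
where the sum ranges over those $\mu\in\{0,\dots,N_{r_1}-1\}$ for which there is a (necessarily unique) $\nu_*(\mu)\in\{0,\dots,N_{s_1}-1\}$ with $\mathfrak{m}_{s,s_1}(k,\nu_*(\mu))=\mathfrak{m}_{r,r_1}(l,\mu)$. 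A mirror identity handles $yx$.

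Substituting these expansions and applying~\eqref{fintypeKMScond-left-action} term by term, both $\phi(xy)$ and $\phi(yx)$ collapse to sums of expressions of the form $N(s\vee r)^{-\beta}\phi(i_e(c))$ indexed by pairs of solutions to equations in the maps $\mathfrak{m}_{s,s_1}$, $\mathfrak{m}_{r,r_1}$. At this point the coprime pair condition~\eqref{cond-m}, applied to $(s_1,r_1)$, is precisely the combinatorial input needed to put the two families of solutions into bijection; combined with the trace property of $\phi\circ i_e$ on $A$, this gives $\phi(xy)=\phi(yx)$. Theorem~\ref{thm:Laca-12-KMSbeta} then promotes the result: $\phi\circ\Phi^\delta$ is a KMS$_\beta$ state of $\NT(X)$.

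The main obstacle is the explicit expansion of $i_s(\31^s_k)^*i_r(\31^r_l)$ and the subsequent bookkeeping required to match the index sets arising from $xy$ and $yx$. Once this calculation is formalized---taking care that all computations stay inside $\NT(X)$ rather than only in the Fock representation---the coprime pair condition does precisely the combinatorial work needed to identify the two sums, and the rest is routine application of the trace property on $i_e(A)$.
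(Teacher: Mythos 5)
Your proposal is correct and follows essentially the same route as the paper's proof: reduce to monomials $i_s(\31^s_j)i_e(a)i_s(\31^s_k)^*$, expand the middle product $i_s(\31^s_k)^*i_r(\31^r_l)$ inside $\NT(X)$ via the explicit formula \eqref{Neals-precise-formula}, push the scaling identity down to $i_e(A)$ through Lemma~\ref{lem:scaling-trace-on-A}, and then use the coprime-pair condition together with traciality of $\phi\circ i_e$ to identify the resulting sums, finishing with Theorem~\ref{thm:Laca-12-KMSbeta}. The ``bookkeeping'' you defer is precisely where the paper's argument concentrates its effort (computing $\phi(y_1y_2)$ and $\phi(y_2y_1)$ each in two ways via Definition~\ref{def:finite-type}(1) and (2), plus a four-case analysis on the $s\wedge r$-components of the indices), but the structural ingredients you list are exactly the right ones.
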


Before giving the proof of this theorem we need some preparation.  First we introduce some notation. For $s,r$ in $P$, if $\mathfrak{m}_{s,r}$ is bijective, then every $k=0,\dots , N_{sr}-1$ has a unique decomposition $k=k(s)\cdot k(r)$ in
$\{0,\dots, N_{s}-1\}\times \{0, \dots , N_{r}-1\}$, and we must have
$$
N_{sr}=N_sN_r.
$$
For  $s,r$ in $P$ let
$$
s'=r^{-1}(s\vee r)\text{ and }r'=s^{-1}(s\vee r),
$$
and note then that $s\vee r=sr'=rs'$ as well as $(s\wedge r)s'=s$ and $(s\wedge r)r'=r$, with
$s\wedge r$ denoting the greatest lower bound  of $s$ and $r$.

As noticed in \cite{HLS}, if the
product system $X$ is such that $I_s\in \Kk(X_s)$ for all $s\in P$, then \cite[Proposition 5.10]{Fow} proves the
stronger statement that every product of the form $i_s(\xi)^*i_r(\eta)$  in $\NT(X)$ is a linear
combination (rather than a limit of linear combinations) of elements of the form
$i_e(a)i_{r'}(\xi')i_{s'}(\eta')^*i_e(b)$ for appropriate $a,b\in A$ and $\xi'\in X_{r'}$,
$\eta'\in X_{s'}$.  The next result makes this
decomposition explicit in the case of $X$ of finite type with bijective maps counting
the elements in the bases.

\begin{lemma}
Let $X$ be a product system of finite type over $P$ such that $\mathfrak{m}_{s,r}$ is bijective, for all $s,r\in P$.
For any $\xi\in X_s$ and $\eta\in X_r$, where $s,r\in P$, we have
\begin{equation}\label{Neals-precise-formula}
i_s(\xi)^*i_r(\eta)=\sum_{i=0}^{N_{s\vee r}-1}i_e(\langle \xi, \31^s_{i(s)}\rangle_s)i_{r'}(\31^{r'}_{i(r')})i_{s'}(\31^{s'}_{i(s')})^*i_e(\langle \eta, \31^r_{i(r)}\rangle_r)^*.
\end{equation}
\end{lemma}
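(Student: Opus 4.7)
The plan is to realize the product $i_s(\xi)^*i_r(\eta)$ through the Nica-covariant projection $i^{(s\vee r)}(I_{s\vee r})$ and then decompose that projection using the two factorisations $s\vee r=sr'=rs'$ afforded by the orthonormal bases of $X_{s\vee r}$.

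Before anything, I would observe that for a finite-type product system the identity $I_q=\sum_{j=0}^{N_q-1}\theta_{\31^q_j,\31^q_j}$ lies in $\Kk(X_q)$ for every $q\in P$, and that the reconstruction formula \eqref{ONB} combined with (T3) and (T4) yields $i^{(q)}(I_q)i_q(\zeta)=i_q(\zeta)$ for every $\zeta\in X_q$. Taking adjoints, this means that
\begin{equation*}
i_s(\xi)^*i_r(\eta)=i_s(\xi)^*\,i^{(s)}(I_s)\,i^{(r)}(I_r)\,i_r(\eta).
\end{equation*}
Since $i_s^{s\vee r}(I_s)=F^{s,r'}(F^{s,r'})^*=I_{s\vee r}$ and similarly $i_r^{s\vee r}(I_r)=I_{s\vee r}$, an application of Nica covariance \eqref{eq:Nica-cov-rep} to the pair $(I_s,I_r)$ produces
\begin{equation*}
i^{(s)}(I_s)\,i^{(r)}(I_r)=i^{(s\vee r)}(I_{s\vee r})=\sum_{i=0}^{N_{s\vee r}-1}i_{s\vee r}(\31^{s\vee r}_i)\,i_{s\vee r}(\31^{s\vee r}_i)^*.
\end{equation*}

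Next, I would exploit the bijectivity of $\mathfrak{m}_{s,r'}$ and $\mathfrak{m}_{r,s'}$: each index $i\in\{0,\dots,N_{s\vee r}-1\}$ admits the two unique factorisations
\begin{equation*}
\31^{s\vee r}_i=F^{s,r'}(\31^s_{i(s)}\otimes_A\31^{r'}_{i(r')})=F^{r,s'}(\31^r_{i(r)}\otimes_A\31^{s'}_{i(s')}),
\end{equation*}
which, via (T3) applied to the universal representation, become
\begin{equation*}
i_{s\vee r}(\31^{s\vee r}_i)=i_s(\31^s_{i(s)})\,i_{r'}(\31^{r'}_{i(r')})\quad\text{and}\quad i_{s\vee r}(\31^{s\vee r}_i)^*=i_{s'}(\31^{s'}_{i(s')})^*\,i_r(\31^r_{i(r)})^*.
\end{equation*}
I would substitute the first factorisation on the left-hand copy of $i_{s\vee r}(\31^{s\vee r}_i)$ and the second on the right-hand copy, so that the summand becomes $i_s(\31^s_{i(s)})\,i_{r'}(\31^{r'}_{i(r')})\,i_{s'}(\31^{s'}_{i(s')})^*\,i_r(\31^r_{i(r)})^*$ sandwiched between $i_s(\xi)^*$ and $i_r(\eta)$.

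Finally, I would collapse the outer factors using the Toeplitz relation (T4), which gives $i_s(\xi)^*i_s(\31^s_{i(s)})=i_e(\langle\xi,\31^s_{i(s)}\rangle_s)$ and $i_r(\31^r_{i(r)})^*i_r(\eta)=i_e(\langle\31^r_{i(r)},\eta\rangle_r)=i_e(\langle\eta,\31^r_{i(r)}\rangle_r)^*$; summing over $i$ then gives exactly \eqref{Neals-precise-formula}. The argument is essentially bookkeeping once the Nica-covariance identity is in place, so I do not anticipate a substantive obstacle; the only points demanding care are verifying that $i_s^{s\vee r}(I_s)=I_{s\vee r}$ and that (T3) transfers the bimodule factorisations of $\31^{s\vee r}_i$ into the asserted operator identities in $\NT(X)$.
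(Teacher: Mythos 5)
Your proposal is correct and follows essentially the same route as the paper: the paper's (very terse) proof also inserts $i^{(s\vee r)}(I_{s\vee r})$ between $i_s(\xi)^*$ and $i_r(\eta)$, expands it via \eqref{eq:Is-compact}, and uses the two factorisations of each $\31^{s\vee r}_i$ coming from $s\vee r=sr'=rs'$ together with (T3)/(T4). You have merely made explicit the justification (Nica covariance applied to $I_s, I_r$ and the identity $i_s^{s\vee r}(I_s)=I_{s\vee r}$) that the paper leaves implicit.
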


\begin{proof} Writing $i_s(\xi)^*i_r(\eta)=i_s(\xi)^*i^{(s\vee r)}(I_{s\vee r})i_r(\eta)$, and using \eqref{eq:Is-compact} and the properties of the
multiplication in $X$ gives \eqref{Neals-precise-formula}.
\end{proof}

\begin{proof}[Proof of Theorem~\ref{thm:from-trace-A-to-trace-F}.] Let $\phi$ be a state of $\Ff$ such that
$\phi\circ i_e$ is a tracial state and \eqref{fintypeKMScond-jl} is satisfied.
Let $y_1=i_s(\31^s_j)i_e(ab^*)i_s(\31^s_k)^*$ and $y_2=i_r(\31^r_m)i_e(cd^*)i_r(\31^r_n)^*$ be spanning elements in
$\Ff$, where $a,b,c,d \in A$, $s,r\in P$, $j,k\in \{0, \dots ,N_s-1\}$ and $m,n\in \{0, \dots ,N_r-1\}$. To prove that $\phi$ is a trace on $\Ff$, it suffices to show that
\begin{equation}\label{phi-trace}
\phi(y_1y_2)=\phi(y_2y_1).
\end{equation}
Using \eqref{Neals-precise-formula}, we have
\begin{align*}
i_s(\rho_s(ba^*)&\31^s_k)^*i_r(\rho_r(cd^*)\31^r_m)\\
&=\sum_{i=0}^{N_{s\vee r}-1}i_e(\langle \rho_s(ba^*)\31^s_k, \31^s_{i(s)}\rangle_s)i_{r'}(\31^{r'}_{i(r')})i_{s'}(\31^{s'}_{i(s')})^*
i_e(\langle \rho_r(cd^*)\31^r_m , \31^r_{i(r)}\rangle_r)^*\\
&=\sum_{i=0}^{N_{s\vee r}-1}i_e(ab^*\langle\31^s_k, \31^s_{i(s)}\rangle_s)i_{r'}(\31^{r'}_{i(r')})i_{s'}(\31^{s'}_{i(s')})^*
i_e(dc^*\langle \31^r_m , \31^r_{i(r)}\rangle_r)^*\\
&=i_e(ab^*)i_{r'}(\31^{r'}_{k'})i_{s'}(\31^{s'}_{m'})^*
i_e(dc^*)^*  \;\;\; \text{ by Definition }~\ref{def:finite-type}\; (3),
\end{align*}
where $k'$ is the unique element in $\{0, \dots ,N_{r'}-1\}$ and $m'$ the unique element in
$\{0, \dots ,N_{s'}-1\}$ such that
$k\cdot k'=m\cdot m'$ in $\{0, \dots ,N_{s\vee r}-1\}$. It follows that
$$
y_1y_2=i_s(\31^s_j) i_e(ab^*)i_{r'}(\31^{r'}_{k'})i_{s'}(\31^{s'}_{m'})^*
i_e(dc^*)^*(i_r(\31^r_n))^*.
$$
Now invoking Definition~\ref{def:finite-type} (2) we can write
$\varphi_{r'}(ab^*)\31^{r'}_{k'}=\sum_{h=0}^{N_{r'}-1}\rho_{r'}(e_h)\31^{r'}_h$
and
$\varphi_{s'}(dc^*)\31^{s'}_{m'}=\sum_{i=0}^{N_{s'}-1}\rho_{s'}(f_i)\31^{s'}_i$.
Then by regrouping terms in $y_1y_2$ we have
\begin{align}
\phi(y_1y_2)
&=\sum_{h=0}^{N_{r'}-1}\sum_{i=0}^{N_{s'}-1}\phi\bigl(i_{s\vee r}(\31^{s\vee r}_{j\cdot h})i_e(e_hf_i^*)i_{s\vee r}(\31^{s\vee r}_{n\cdot i})^*\bigr)\notag \\
&=N(s\vee r)^{-\beta}\sum_{h=0}^{N_{r'}-1}\sum_{i=0}^{N_{s'}-1}\delta_{j\cdot h, n\cdot i}\phi(i_e(e_hf_i^*))
\;\;\; \text{ by } \eqref{fintypeKMScond-jl}\notag\\
&=N(s\vee r)^{-\beta}\phi\bigl(i_e(\langle \31^{r'}_h, \varphi_{r'}(ab^*)\31^{r'}_{k'}\rangle\langle \31^{s'}_{m'}, \varphi_{s'}(cd^*)\31^{s'}_i\rangle)\bigr),\label{product-ip1}
\end{align}
where $h\in \{0, \dots ,N_{r'}-1\}$ and $i\in \{0, \dots, N_{s'}-1\}$ are uniquely determined such that
$\mathfrak{m}_{s,r'}(j, h)= \mathfrak{m}_{r,s'}(n, i)$ in $\{0, \dots ,N_{s\vee r}-1\}$.

Also, invoking  Definition~\ref{def:finite-type}(1) we  write
$\rho_s(ab^*)\31^s_j=\sum_{g=0}^{N_s-1}\varphi_s(u_g)\31^s_g$ and $\rho_r(dc^*)\31^r_n=\sum_{h=0}^{N_r-1}\varphi_r(w_h)\31^r_h$.
Hence $y_1y_2=\sum_g\sum_hi_e(u_g)i_{s\vee r}(\31^{s\vee r}_{g\cdot k'})i_{s\vee r}(\31^{s\vee r}_{h\cdot m'})^*i_e(w_h)^*$. Since $\phi\circ i_e$ is a trace, $\phi$ satisfies \eqref{fintypeKMScond-left-action}, and so
\begin{align}
\phi(y_1y_2)
&=\sum_g\sum_h N(s\vee r)^{-\beta}\phi(i_e(\langle \varphi_{s\vee r}(w_h) \31^{s\vee r}_{h\cdot m'},
\varphi_{s\vee r}(u_g)\31^{s\vee r}_{g\cdot k'}\rangle))\notag\\
&=N(s\vee r)^{-\beta}\phi(i_e(\langle F^{r,s'}(\31^{r}_{n}\otimes_A \varphi_{s'}(dc^*)\31^{s'}_{m'}),
F^{s,r'}(\31^{s}_{j}\otimes_A \varphi_{r'}(ab^*)\31^{r'}_{k'})\rangle)).\label{F-s-r}
\end{align}
Write  $\31^s_j=F^{s\wedge r,s'}({\31^{s\wedge r}_{j(s\wedge r)}\otimes \31^{s'}_{j(s')}})$ and $\31^r_n=
F^{s\wedge r,r'}(\31^{s\wedge r}_{n(s\wedge r)}\otimes \31^{r'}_{n(r')})$. The associativity yields a
decomposition
$$
F^{(s\wedge r)r', s'}(F^{s\wedge r, r'}\otimes_A {I_{s'}})= F^{s\wedge r, r's'}(I_{s\wedge r}\otimes_A F^{r',s'}).
$$
Now using the definition of the inner product in $X_{s'r'}=X_{r's'}$, the term under $\phi$ in \eqref{F-s-r} is seen
to be equal to
\begin{align*}
\phi(&i_e(\langle F^{r,s'}(\31^{r}_{n}\otimes_A \varphi_{s'}(dc^*)\31^{s'}_{m'}),
F^{s,r'}(\31^{s}_{j}\otimes_a \varphi_{r'}(ab^*)\31^{r'}_{k'})\rangle))\\
&\hspace{-2mm}=\phi\bigl(i_e(\langle F^{r', s'}(\31^{r'}_{n(r')}\otimes\varphi_{s'}(dc^*)\31^{s'}_{m'}), \varphi_{s'r'}(\langle \31^{s\wedge r}_{n(s\wedge r)}, \31^{s\wedge r}_{j(s\wedge r)}\rangle)F^{s', r'}(\31^{s'}_{j(s')}\otimes
\varphi_{r'}(ab^*)\31^{r'}_{k'} )\rangle)\bigr).
\end{align*}
Thus Definition~\ref{def:finite-type}(3) implies that  $\phi(y_1y_2)=0$ unless the equality
$n(s\wedge r)=j(s\wedge r)$ holds, in
which case
\begin{equation}\label{eq:otherproduct-ip1}
\phi(y_1y_2)=N(s\vee r)^{-\beta}\phi\bigl(i_e (\langle F^{r', s'}(\31^{r'}_{n(r')}\otimes\varphi_{s'}(dc^*)\31^{s'}_{m'}), F^{s', r'}(\31^{s'}_{j(s')}\otimes
\varphi_{r'}(ab^*)\31^{r'}_{k'} )\rangle ) \bigr).
\end{equation}

Similarly, if we let $n',g\in \{0,\dots ,N_{s'}-1\}$ and $j',l\in \{0, \dots ,N_{r'}-1\}$ be the uniquely determined
elements such that $
n\cdot n'=j\cdot j'\text{ and }m\cdot g=k\cdot l$ in $\{0, \dots ,N_{s\vee r}-1\}$, then by employing
Definition~\ref{def:finite-type}(2) we have
$$
\phi(y_2y_1)
=N(s\vee r)^{-\beta}\phi\bigl(i_e( \langle \31^{s'}_g, \varphi_{s'}(cd^*)\31^{s'}_{n'}\rangle_{s'}\langle \31^{r'}_{j'}, \varphi_{r'}(ab^*)\31^{r'}_l\rangle_{r'})\bigr).
$$
Since $\phi\circ i_e$ is a trace on $A$, we can rewrite this as
\begin{equation}\label{product-ip2}
\phi(y_2y_1)=N(s\vee r)^{-\beta}\phi\bigl(i_e( \langle \31^{r'}_{j'}, \varphi_{r'}(ab^*)\31^{r'}_l\rangle_{r'}\langle \31^{s'}_g, \varphi_{s'}(cd^*)\31^{s'}_{n'}\rangle_{s'})\bigr)
\end{equation}
On the other hand, by employing Definition~\ref{def:finite-type}(1) we obtain
$$
\phi(y_2y_1)=N(s\vee r)^{-\beta}\phi\bigl(i_e (\langle F^{s', r'}(\31^{s'}_{k(s')}\otimes
\varphi_{r'}(dc^*)\31^{r'}_{j'}), F^{r', s'}(\31^{r'}_{m(r')}\otimes
\varphi_{s'}(ab^*)\31^{s'}_{n'} )\rangle ) \bigr)
$$
if $k(s\wedge r)=m(s\wedge r)$, and $\phi(y_2y_1)=0$ otherwise.

\vskip 0.2cm
{\bf{Case 1.}} $k(s\wedge r)=m(s\wedge r)$ and $n(s\wedge r)=j(s\wedge r)$. Then the equalities
$k\cdot k'=m\cdot m'$ and $j\cdot j'=n\cdot n'$ yield
\begin{align*}
k(s\wedge r) \cdot k(s')\cdot k'&= m(s\wedge r)\cdot m(r')\cdot m'\text{ and}\\
j(s\wedge r)\cdot j(s')\cdot j'&= n(s\wedge r)\cdot n(r')\cdot n',
\end{align*}
and so uniqueness of decomposition in $\{0,\dots, N_{s\wedge r}-1\}\times\{0, \dots, N_{s'r'}-1\}$ implies that
\begin{align*}
\mathfrak{m}_{s',r'}(k(s'), k') &=\mathfrak{m}_{r',s'}(m(r'), m')\text{ and}\\
\mathfrak{m}_{s',r'}(j(s'), j') &=\mathfrak{m}_{r',s'}(n(r'), n').
\end{align*}
For the same reason,  $k\cdot l=m\cdot g$ and $j\cdot h=n\cdot i$ imply that
\begin{align*}
\mathfrak{m}_{s',r'}(k(s'), l) &= \mathfrak{m}_{r',s'}(m(r'),g)\text{ and}\\
\mathfrak{m}_{s',r'}(j(s'), h) &= \mathfrak{m}_{r',s'}(n(r'), i)
\end{align*}
Since $s'\wedge r'=e$, the assumption that  $\mathfrak{m}_{s',r'}$ respects co-prime pairs implies that
\begin{equation}\label{all-indices-equal}
k'=l,\, m'=g, \,
j'=h,\,\text{ and } n'=i.
\end{equation}

Hence, by \eqref{product-ip1} and \eqref{product-ip2}, the expressions for $\phi(y_1y_2)$ and $\phi(y_2y_1)$ become
\begin{align*}
\phi(y_1y_2)&=N(s\vee r)^{-\beta}\phi\bigl(i_e(\langle \31^{r'}_{j'},
\varphi_{r'}(ab^*)\31^{r'}_{k'}\rangle_{r'}\langle \31^{s'}_{m'}, \varphi_{s'}(cd^*)\31^{s'}_{n'}\rangle_{s'})\bigr), \\
\phi(y_2y_1)&=N(s\vee r)^{-\beta}\phi\bigl(i_e( \langle \31^{r'}_{j'}, \varphi_{r'}(ab^*)\31^{r'}_{k'}\rangle_{r'}\langle \31^{s'}_{m'}, \varphi_{s'}(cd^*)\31^{s'}_{n'}\rangle_{s'})\bigr),
\end{align*}
establishing $\phi(y_1y_2)=\phi(y_2y_1)$ in this case.

\vskip 0.2cm
{\bf{Case 2.}} $m(s\wedge r)\neq k(s\wedge r)$. In this case we already saw that $\phi(y_2y_1)=0$. However,
\begin{align*}
i_s(\31^s_k)^*i_r(\31^r_m)
&=i_{s'}(\31^{s'}_{k(s')})^*i_e(\langle \31^{s\wedge r}_{k(s\wedge r)}, \31^{s\wedge r}_{m(s\wedge r)}\rangle)
i_{r'}(\31^{r'}_{m(r')})\\
&=\delta_{k(s\wedge r), m(s\wedge r)}i_{s'}(\31^{s'}_{k(s')})^*i_{r'}(\31^{r'}_{m(r')})=0,
\end{align*}
by Definition~\ref{def:finite-type}(3). Hence $y_1y_2=0$ and thus $\phi(y_1y_2)=\phi(y_2y_1)$.

\vskip 0.2cm
{\bf{Case 3.}} $j(s\wedge r)\neq n(s\wedge r)$. Similarly to case 2, we have $\phi(y_1y_2)=0$ by previous consideration, and
$y_2y_1=0$ by Definition~\ref{def:finite-type}(3), so again $\phi(y_1y_2)=\phi(y_2y_1)$.

\vskip 0.2cm
{\bf{Case 4.}} Finally, if $m(s\wedge r)\neq k(s\wedge r)$ and $j(s\wedge r)\neq n(s\wedge r)$
then $y_1y_2=0=y_2y_1$.
\end{proof}

Note that if $\phi\circ i_e$ is injective, then \eqref{product-ip1} and \eqref{eq:otherproduct-ip1} show
that the product system $X$ must satisfy the condition
\begin{equation}
\langle F^{s,r}(\31^s_j\otimes_A \varphi_r(a)\31^r_m), F^{r,s}(\31^r_n\otimes_A \varphi_s(b)\31^s_k)\rangle_{sr}
=\langle \varphi_r(a)\31^r_m,\31^r_n\rangle_r\langle \31^s_j, \varphi_s(b)\31^s_k\rangle_s
\end{equation}
for all $a,b\in A$, all $s,r\in P$ such that $s\wedge r=e$ and all $j,k=0,\dots, N_s-1$, $m,n=0, \dots , N_r-1$.

%%%%%%%%%%%%%%%%%%%%%%%%%%%%%%%%%%%%%%%%%%%%%%%%%%%%%

\subsection{Ground states and KMS$_\beta$ states of $\NT(X)$ induced from  states of $A$. }

We begin this section by recalling the construction of the induced representation via a right Hilbert
module. We refer to \cite{RW} for details. Let $Y$ be a right Hilbert $A$-module and  assume that $\varphi:B
 \to \mathcal{L}(Y)$ is a $*$-homomorphism. Suppose that $\pi:A\to B(H_\pi)$ is a representation. The
 balanced tensor product space $Y\otimes_A H_\pi$ is a Hilbert space where the inner-product is characterised
 by
 \begin{equation}\label{tensorinnerproduct}
 \langle\xi\otimes_A h, \eta\otimes_A k\rangle=(\pi(\langle \eta,\xi\rangle)h\mid k)
\end{equation}
for $\xi,\eta\in Y$ and $h,k\in H_\pi$. The induced representation $\Ind\pi$ of $B$ on  $Y\otimes_A H_\pi$
acts by
\begin{equation}\label{eq_formula-ind-repr}
\Ind\pi (b) (\xi\otimes_A h)=(\varphi(b)\xi) \otimes_A h.
\end{equation}
We apply this construction to the Fock module $F(X)$ associated to a product system $X$ over $P$ of right Hilbert
$A$--$A$-bimodules, see section~\ref{Prelim-3}. For a compactly aligned product system $X$, the Fock
representation $l$ of $X$ in $\mathcal{L}(F(X))$ gives rise to a $*$-homomorphism $l_*:\NT(X) \to \mathcal{L}(F(X))$.

\begin{rmk}
 \rm Since the left action has image in $\Kk(X_s)$ for every $s\in P$, \cite[Theorem 6.3]{Fow} says that $\NT(X)$ is isomorphic to a certain crossed-product $B_P\rtimes_{\tau, X}P$ (the proof there uses that every $X_s$ is essential, but applies in our setting due to Definition~\ref{def:finite-type}(1)).  Then the remark following \cite[Definition 7.1]{Fow}  indicates that
$B_P\rtimes_{\tau, X}P$, which is a universal crossed product for an action of $P$ on $B_P$ twisted by $X$, is isomorphic to
the associated reduced crossed product. We infer from this that $l_*$ is faithful.
\end{rmk}

Given a state $\tau$ on $A$, let $(\pi_\tau, h_\tau, H_\tau)$ be the corresponding GNS-representation. We denote $\31=\31_e\oplus \bigoplus_{s\neq e}0_s$ in $F(X)$. Consider the representation
$$
\Ind\pi_\tau : \NT(X)\to B(F(X)\otimes_A H_\tau)),
$$
and let
\begin{equation}\label{omegatautilde}
\tilde{\omega}_\tau(y)=(\Ind\pi_\tau(y) (\31\otimes h_\tau) \mid \31\otimes h_\tau)
\end{equation}
be the state of $\NT(X)$ arising from this representation. We claim that
\begin{equation}\label{eq:tilde-omega-GNS}
\tilde{\omega}_\tau(y)=
\begin{cases}
\tau(ab^*)&\text{ if } s=r=e\\
0&\text{ otherwise }
\end{cases}
\end{equation}
for $y=i_e(a)i_s(\31^s_k)i_r(\31^r_m)^*i_e(b)^*\in \NT(X)$.
It follows from identities (\ref{tensorinnerproduct}) and \eqref{eq_formula-ind-repr} that
$\tilde{\omega}_\tau(y)=\langle (l_*(y)\31)\otimes h_\tau, \31\otimes h_\tau\rangle$. The
characterization of $l_*$  shows that $l_r(\varphi_r(b)\31^r_m)^*\31=0_r$
unless $r=e$, which in turn implies $l_*(y)\31=0$ when $r\neq e$. Assuming $r=e$, we see next that
$l_*(y)\31$, as an element in $F(X)$, has a non-zero coordinate only at $s$, where it equals $\rho_s(b^*)(\varphi_s(a)\31^s_k)$.
To compute further in $\tilde{\omega}_\tau(y)$, note that the characterization of the inner-product on $F(X)\otimes_A H_\tau$
involves computing the inner-product in $F(X)$ given by $\langle \31, l_*(y)\31\rangle$.
By the definition of the inner-product on
$F(X)$ it follows that a non-zero contribution in $\langle \31, l_*(y)\31\rangle$ is
only possible at $s=e$, where it equals
$\langle \31_e, i_e(ab^*)\31_e\rangle_e$. In other words, we have established that $r\neq e$ or $s\neq e$ imply
$\tilde{\omega}_\tau(y)=0$, while for $s=r=e$ we obtain $\tilde{\omega}_\tau(y)=\bigl(\pi_\tau(\langle \31_e, i_e(ab^*)\31_e\rangle)h_\tau \mid h_\tau\bigr)$,
which means $\tilde{\omega}_\tau(y)=\tau(ab^*)$, as required in \eqref{eq:tilde-omega-GNS}. Thus, in connection with
Theorem~\ref{thm:Laca-12-ground}, we have the following result.

\begin{prop}\label{prop:ground-states}
For each state $\tau$ of $A$, the induced state $\tilde{\omega}_\tau$ given by (\ref{omegatautilde})
is a ground state of $\NT(X)$. Moreover, the assignment $\tau\mapsto \tilde{\omega}_\tau$ is
an affine isomorphism.
\end{prop}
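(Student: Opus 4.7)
The plan is to identify $\tilde{\omega}_\tau$ as the image of a state of $\Ff$ via the correspondence from Theorem~\ref{thm:Laca-12-ground}, and then to argue that the full assignment $\tau\mapsto\tilde\omega_\tau$ is inverse to the restriction of that correspondence to $i_e(A)$.

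First, set $\phi_\tau:=\tilde{\omega}_\tau\upharpoonright_{\Ff}$. The formula \eqref{eq:tilde-omega-GNS} shows that $\tilde{\omega}_\tau$ vanishes on every spanning monomial $i_e(a)i_s(\31^s_k)i_r(\31^r_m)^*i_e(b)^*$ for which $(s,r)\neq(e,e)$. Expanding an arbitrary $i_s(\xi)i_r(\eta)^*$ through the orthonormal bases and the reconstruction formula \eqref{ONB} yields $\tilde{\omega}_\tau(i_s(\xi)i_r(\eta)^*)=0$ whenever $s\neq r$; hence $\tilde{\omega}_\tau=\phi_\tau\circ\Phi^\delta$. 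The same computation gives that $\phi_\tau$ vanishes on every generator $i_s(\xi)i_s(\eta)^*$ of $B_s$ for $s>e$, so $\phi_\tau\upharpoonright_{B_s}=0$ for all $s\in P\setminus\{e\}$. Applying Theorem~\ref{thm:Laca-12-ground} (in the dynamics $\sigma^N$ of Section~\ref{section_finite-type}, where $N$ satisfies $N(r)>1$ for $r>e$) then yields that $\tilde{\omega}_\tau$ is a ground state.

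For the claim that $\tau\mapsto\tilde{\omega}_\tau$ is affine and injective, both properties are immediate from \eqref{eq:tilde-omega-GNS}: the right-hand side depends linearly on $\tau$, and $\tilde{\omega}_\tau(i_e(a))=\tau(a)$ recovers $\tau$ from $\tilde{\omega}_\tau$. Surjectivity onto the ground states is the key point. Given a ground state $\omega$, Theorem~\ref{thm:Laca-12-ground} produces a unique state $\phi$ of $\Ff$ such that $\omega=\phi\circ\Phi^\delta$ and $\phi\upharpoonright_{B_s}=0$ for all $s>e$. Let $\tau:=\phi\circ i_e$, which is a state of $A$. Then $\tilde{\omega}_\tau$ and $\omega$ correspond under the bijection of Theorem~\ref{thm:Laca-12-ground} to the states $\phi_\tau$ and $\phi$ respectively, both of which vanish on $B_s$ for $s>e$ and both of which restrict to $\tau$ on $i_e(A)=B_e$.

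The main (and only real) obstacle is therefore to show that a state $\phi$ of $\Ff$ vanishing on $B_s$ for all $s>e$ is determined by its restriction $\phi\circ i_e$ to $A$. This is where I would use the filtration $\Ff=\overline{\bigcup_F B_F}$ from \eqref{def:B_F}: for any $\vee$-closed finite $F\subset P$ and any $y=\sum_{s\in F}i^{(s)}(T_s)\in B_F$, one has
\[
\phi(y)=\phi(i_e(T_e))\cdot\mathbf{1}_{e\in F},
\]
because each term with $s>e$ lies in $B_s$ and is annihilated by $\phi$. Hence $\phi$ agrees with $\phi_\tau$ on every $B_F$, and by continuity on all of $\Ff$, so $\phi=\phi_\tau$ and $\omega=\tilde{\omega}_\tau$. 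The well-definedness of the preceding display across different representations of $y$ is automatic from $\phi(y)$ being a single value, so no compatibility check is required.
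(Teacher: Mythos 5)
Your argument is correct and matches the paper's intended proof: the paper deduces the proposition directly from the computation \eqref{eq:tilde-omega-GNS} combined with Theorem~\ref{thm:Laca-12-ground}, exactly as you do. The only difference is that you spell out the surjectivity step (a state of $\Ff$ annihilating every $B_s$ with $s>e$ is determined by its restriction to $i_e(A)$, via the filtration $\Ff=\overline{\bigcup_F B_F}$), which the paper leaves implicit.
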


Next we investigate if a similar construction can induce KMS$_\beta$ states of $\NT(X)$.
For each $s\in P$, let $\bar{\31}^s_j$ be the vector in $F(X)$ with component equal to $\31^s_j$ at $s$ and
$0_r$ for $r\neq s$.
Note that $l_*(i_s(\31^s_j))\31=l_s(\31^s_j)\31=\bar{\31}^s_j$ for every $s\in P$. We
define a series with positive terms by
\begin{equation}\label{def:series}
\zeta_N(\beta):=\sum_{s\in P}N(s)^{-\beta} N_s.
\end{equation}

\begin{theorem}\label{thm:KMsbeta-from-tracetau} Let $(G, P)$ be lattice ordered and let $X$ be an
associative product system of finite type over $P$ such that $\mathfrak{m}_{s,r}$ are bijective and respect
co-prime pairs, for all $s,r\in P$.  Assume that the series in \eqref{def:series} is convergent in
an interval $(\beta_c,\infty)$ for some $\beta_c>0$.

Let $\beta>\beta_c$. Then for a tracial state $\tau $ of $A$ there is a state $\omega_\tau:\Ff\to\CC$ given by
\begin{equation}\label{def:state-from-induced}
\omega_\tau(y)= \sum_{\{s\in P:r\leq s\}}\frac{N(s)^{-\beta}}{\zeta_N(\beta)}\sum_{j=0}^{N_s-1}
\tau(\langle \varphi_{r^{-1}s}(\langle \xi,\31^r_{j(r)}\rangle)\31^{r^{-1}s}_{j'},
\varphi_{r^{-1}s}(\langle \eta,\31^r_{j(r)}\rangle)\31^{r^{-1}s}_{j'}\rangle)
\end{equation}
for $y=i^{(r)}(\theta_{\xi,\eta})\in B_r$, where for each $j$ in the summation we denote $j'=j(r^{-1}s)$. Further,
the assignment $\tau\mapsto \omega_\tau\circ \Phi^\delta$ is an affine continuous map from the set of tracial states of $A$ to a subset of KMS$_\beta$ states of $\NT(X)$.
\end{theorem}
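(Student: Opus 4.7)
The plan is to realise $\omega_\tau$ as a weighted family of vector states of the representation $\Ind\pi_\tau: \NT(X) \to B(F(X) \otimes_A H_\tau)$ induced from the GNS triple $(\pi_\tau, H_\tau, h_\tau)$ of $\tau$, and then invoke Theorem~\ref{thm:from-trace-A-to-trace-F}. For each $s \in P$ and $j \in \{0, \ldots, N_s-1\}$, let $\bar{\31}^s_j \in F(X)$ denote the vector with entry $\31^s_j$ at position $s$ and zero elsewhere; these are orthonormal in $F(X)$, so $\bar{\31}^s_j \otimes h_\tau$ are unit vectors in $F(X) \otimes_A H_\tau$. I first set
\[
\tilde\omega_\tau(T) := \frac{1}{\zeta_N(\beta)} \sum_{s \in P} N(s)^{-\beta} \sum_{j=0}^{N_s-1} \bigl(\Ind\pi_\tau(T)(\bar{\31}^s_j \otimes h_\tau) \mid \bar{\31}^s_j \otimes h_\tau\bigr)
\]
for $T \in \NT(X)$. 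Since $\zeta_N(\beta) < \infty$ for $\beta > \beta_c$, this is an absolutely convergent convex combination of vector states, hence a state of $\NT(X)$.

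Next I check that $\tilde\omega_\tau\upharpoonright_{\Ff}$ coincides with \eqref{def:state-from-induced}. Fix $y = i^{(r)}(\theta_{\xi, \eta}) \in B_r$. The description of the Fock representation in Subsection~\ref{Prelim-3} shows that $l_r(\eta)^* \bar{\31}^s_j = 0$ unless $r \leq s$, in which case decomposing $\31^s_j = F^{r, r^{-1}s}(\31^r_{j(r)} \otimes_A \31^{r^{-1}s}_{j'})$ with $j' = j(r^{-1}s)$ yields $l_r(\eta)^* \bar{\31}^s_j = \varphi_{r^{-1}s}(\langle \eta, \31^r_{j(r)}\rangle) \31^{r^{-1}s}_{j'}$ at position $r^{-1}s$. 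Applying $l_r(\xi)$ and then unfolding the tensor-product inner product \eqref{tensorinnerproduct} together with the formula for the inner product on $X_r \otimes_A X_{r^{-1}s}$ collapses the vector state at $\bar{\31}^s_j \otimes h_\tau$ to exactly the summand shown in \eqref{def:state-from-induced}. Setting $\omega_\tau := \tilde\omega_\tau\upharpoonright_{\Ff}$, this is therefore a state of $\Ff$ given by \eqref{def:state-from-induced}.

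It remains to verify the hypotheses of Theorem~\ref{thm:from-trace-A-to-trace-F}. Taking $r = e$ in \eqref{def:state-from-induced}, so that $N_e = 1$, $\31^e_0 = 1_A$, $j(e) = 0$, and $\langle a, 1\rangle_e = a^*$, gives
\[
\omega_\tau(i_e(a)) = \frac{1}{\zeta_N(\beta)} \sum_{s \in P} N(s)^{-\beta} \sum_{j=0}^{N_s-1} \tau(\langle \31^s_j, \varphi_s(a) \31^s_j \rangle),
\]
which is a trace in $a$ because $\tau$ is and each $\varphi_s$ is a $*$-homomorphism. For the scaling identity \eqref{fintypeKMScond-jl} I rewrite $i_s(\31^s_j) i_e(a) i_s(\31^s_l)^* = i^{(s)}(\theta_{\rho_s(a) \31^s_j, \31^s_l})$ and apply \eqref{def:state-from-induced} with $r = s$; orthonormality of $\{\31^s_k\}$ forces $k(s) = l$ and then $j = l$, and reparametrising the outer sum over $t \geq s$ as $t = sv$ (with bijectivity of $\mathfrak{m}_{s, v}$ ensuring the inner sum sweeps all of $\{0, \ldots, N_v-1\}$) factors out $N(s)^{-\beta}$ and produces exactly $\delta_{j, l} N(s)^{-\beta} \omega_\tau(i_e(a))$.

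Theorem~\ref{thm:from-trace-A-to-trace-F} then supplies that $\omega_\tau$ is a trace on $\Ff$ satisfying the scaling condition, and Theorem~\ref{thm:Laca-12-KMSbeta} (equivalently Lemma~\ref{restrictiontoA}(b)) concludes that $\omega_\tau \circ \Phi^\delta$ is a KMS$_\beta$ state of $\NT(X)$. Affine continuity of $\tau \mapsto \omega_\tau \circ \Phi^\delta$ is immediate from linearity of \eqref{def:state-from-induced} in $\tau$ together with the uniform bound provided by $\zeta_N(\beta)$. The main obstacle is the bookkeeping required to match the vector-state expression to the formula \eqref{def:state-from-induced}, in particular the interplay between the isomorphisms $F^{r, r^{-1}s}$, the left actions $\varphi_{r^{-1}s}$, and the conjugate-linear inner product on the balanced tensor product, which have to combine so as to reproduce the precise right-hand side of \eqref{def:state-from-induced}.
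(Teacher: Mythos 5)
Your proposal follows the same route as the paper: realise $\omega_\tau$ as the $\zeta_N(\beta)^{-1}N(s)^{-\beta}$-weighted sum of vector states of $\Ind\pi_\tau$ at the vectors $\bar{\31}^s_j\otimes h_\tau$, match this with \eqref{def:state-from-induced} via the Fock representation, verify traciality on $i_e(A)$ and the scaling identity \eqref{fintypeKMScond-jl}, and conclude via Theorem~\ref{thm:from-trace-A-to-trace-F} and Theorem~\ref{thm:Laca-12-KMSbeta}. The one place you are terser than the paper is the traciality of $a\mapsto\sum_j\tau(\langle \31^s_j,\varphi_s(a)\31^s_j\rangle)$, which is not an immediate consequence of $\varphi_s$ being a $*$-homomorphism but requires the short matrix-trace computation of the paper's Lemma~\ref{lemma:1} (expand $\varphi_s(c)\31^s_j$ and $\varphi_s(d)\31^s_j$ in the right action via Definition~\ref{def:finite-type}(2) and use the trace property of $\tau$ on the resulting coefficients).
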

Before giving the proof of this theorem we need some preparation.
To argue that \eqref{def:state-from-induced} defines a state of $\Ff$, we first show that $\omega_\tau$ can be
defined alternatively on $\Ff$ as
\begin{equation}\label{eq:sum-of-vector-states}
\omega_\tau(y)= \frac{1}{\zeta_N(\beta)} \sum_{s\in P}{N(s)^{-\beta}}\sum_{j=0}^{N_s-1}
\langle\Ind\pi_\tau(y) (l_s({\31}^s_j)\31\otimes h_\tau),\,\, (l_s({\31}^s_j)\31\otimes h_\tau)\rangle
\end{equation}
for $y\in B_r$, $r\in P$. As in the proof of \cite[Theorem 20]{Lac},  $\omega_\tau$ is an absolutely convergent
infinite linear combination of vector states in $\Ind\pi_\tau$, so it is
absolutely continuous with respect to $\tilde{\omega}_\tau$. Also, $\tilde{\omega}_\tau$ is absolutely
continuous with respect to $\omega_\tau$,
since the vectors $\{l_*(i_s({\31}^s_j))\31\otimes h_\tau\}_{s\in P}$
form a generating set for $\Ind\pi_\tau$.
Now, by definition of the Fock representation, $l_*(y)\bar{\31}^s_j=0$ when
$r\not\leq s$. Assume therefore $s\in rP$.
Since  $l_s({\31}^s_j)\31=\bar{\31}^s_j$, the summand in $s$ and $j$ in the right-hand side of
\eqref{eq:sum-of-vector-states} is
\begin{align}
\langle\Ind\pi_\tau(y) \bigl((l_s({\31}^s_j)\31)\otimes h_\tau\bigr),\,\, (l_s({\31}^s_j)\31)\otimes h_\tau\rangle
&=\langle\Ind\pi_\tau(y) (\bar{\31}^s_j\otimes h_\tau),\,\, \bar{\31}^s_j\otimes h_\tau\rangle\notag\\
&=\langle (l_*(y)\bar{\31}^s_j)\otimes h_\tau,\,\, \bar{\31}^s_j\otimes h_\tau\rangle\notag\\
&=\left(\pi_\tau(\langle \bar{\31}^s_j, l_*(y)\bar{\31}^s_j\rangle_{F(X)})h_\tau\mid h_\tau\right)\notag\\
&=\tau(\langle {\31}^s_j, i_r^s(\theta_{\xi,\eta}){\31}^s_j\rangle_s).\label{eq:Ind-pi-tau}
\end{align}
At this stage we put $j'=j(r^{-1}s)$, so that $j$ is given uniquely by $\mathfrak{m}_{r,r^{-1}s}(j(r), j')=j$. We
decompose $\31^s_j=F^{r, r^{-1}s}(\31^r_{j(r)}\otimes \31^{r^{-1}s}_{j'})$, and we use the
properties of the balanced inner product on $X_r\otimes_A X_{r^{-1}s}$ to write further
\begin{align*}
\langle\Ind\pi_\tau(y) &(l({\31}^s_j)\31\otimes h_\tau),\,\, (l({\31}^s_j)\31)\otimes h_\tau\rangle\\
&=\tau(\langle \31^{r^{-1}s}_{j'}, \varphi_{r^{-1}s}(\langle\31^r_{j(r)},\xi\rangle\langle \eta, \31^r_{j(r)}\rangle)\31^{r^{-1}s}_{j'}\rangle)\\
&=\tau(\langle \varphi_{r^{-1}s}(\langle \xi, \31^r_{j(r)}\rangle)\31^{r^{-1}s}_{j'}, \varphi_{r^{-1}s}(\langle \eta, \31^r_{j(r)}\rangle)\31^{r^{-1}s}_{j'}\rangle).
\end{align*}
The last term is the summand under $s$ and $j$ in \eqref{def:state-from-induced}. This shows that formulae
(\ref{def:state-from-induced}) and (\ref{eq:sum-of-vector-states}) yield the same positive functional.
Finally,  functional $\omega_\tau$ is a state because
\begin{align*}
\omega_\tau(1)
&=\sum_{s\in P} \frac{N(s)^{-\beta}}{\zeta_N(\beta)} \sum_{j=0}^{N_s-1}\tau(\langle \31^s_{j(s)},\31^s_{j(s)}\rangle)\\
&=\sum_{s\in P}\frac{N(s)^{-\beta}}{\zeta_N(\beta)}  \sum_{j=0}^{N_s-1}\tau(1)\\
&=(\zeta_N(\beta))^{-1}\sum_{s\in P}N(s)^{-\beta}N_s=1.
\end{align*}
Next we want to employ Theorem~\ref{thm:from-trace-A-to-trace-F} to
show that $\omega_\tau$ given by \eqref{def:state-from-induced} is a trace of $\Ff$. In the
next lemmas we verify  that the assumptions of Theorem~\ref{thm:from-trace-A-to-trace-F} are
fulfilled by $\omega_\tau$.

\begin{lemma}\label{lemma:1}
The map $\omega_\tau$ given by \eqref{def:state-from-induced} is a trace on
$B_e=i_e(A)$ whenever $\tau$ is a trace on $A$.
\end{lemma}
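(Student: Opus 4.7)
The plan is to reduce the claim to showing that for each fixed $s \in P$ the functional $\tau_s \colon A \to \CC$ given by
\begin{equation*}
\tau_s(a) \;:=\; \sum_{j=0}^{N_s-1} \tau\bigl(\langle \31^s_j, \varphi_s(a)\31^s_j\rangle_s\bigr)
\end{equation*}
is a trace on $A$. For the reduction, I would first observe that $X_e = {}_AA_A$ has the one-element orthonormal basis $\{\31^e_0\} = \{1\}$, so $N_e = 1$. Then for $y = i_e(a) \in B_e$, writing $y = i^{(e)}(\theta_{a,1})$ and substituting $\xi = a$, $\eta = 1$ into \eqref{def:state-from-induced} (with $r = e$, $j(e) = 0$, and $j' = j$), the identities $\langle a, 1\rangle_e = a^*$ and $\langle \varphi_s(a^*)\31^s_j, \31^s_j\rangle_s = \langle \31^s_j, \varphi_s(a)\31^s_j\rangle_s$ (the latter from adjointability and $\varphi_s(a^*)^* = \varphi_s(a)$) yield
\begin{equation*}
\omega_\tau(i_e(a)) \;=\; \frac{1}{\zeta_N(\beta)}\sum_{s\in P} N(s)^{-\beta}\, \tau_s(a).
\end{equation*}
Since the outer series converges absolutely under the hypothesis $\beta > \beta_c$, traciality of $\omega_\tau\circ i_e$ follows once each $\tau_s$ is known to be tracial.

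To prove $\tau_s(ac) = \tau_s(ca)$ for $a, c \in A$, I would apply the reconstruction formula \eqref{ONB} to $\varphi_s(c)\31^s_j$, and then use linearity of $\langle\cdot,\cdot\rangle_s$ in the second variable together with $\langle \xi, \eta\cdot b\rangle_s = \langle\xi, \eta\rangle_s \, b$ to obtain
\begin{equation*}
\langle \31^s_j, \varphi_s(ac)\31^s_j\rangle_s \;=\; \sum_{k=0}^{N_s-1} \langle \31^s_j, \varphi_s(a)\31^s_k\rangle_s \,\langle \31^s_k, \varphi_s(c)\31^s_j\rangle_s.
\end{equation*}
Applying $\tau$, summing over $j$, and invoking the trace property of $\tau$ on $A$ to exchange the two factors inside $\tau$ then gives a double sum whose summation over $j$ can be reassembled by the reconstruction formula for $\varphi_s(a)\31^s_k$, producing $\sum_k \tau\bigl(\langle \31^s_k, \varphi_s(ca)\31^s_k\rangle_s\bigr) = \tau_s(ca)$. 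Because $X$ is of finite type, both inner sums (over $j$ and over $k$) are finite, so no convergence issues intervene; the only thing to watch is the careful bookkeeping of the double sum and the correct placement of the $A$-valued inner product identities, and I do not expect any substantive obstacle.
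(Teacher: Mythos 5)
Your proposal is correct and follows essentially the same route as the paper: both reduce the claim to showing that, for each fixed $s\in P$, the functional $a\mapsto\sum_{j}\tau(\langle \31^s_j,\varphi_s(a)\31^s_j\rangle_s)$ is tracial, and both establish this by expanding in the orthonormal basis and using the trace property of $\tau$ to swap the two matrix coefficients. The only cosmetic difference is that you invoke the reconstruction formula \eqref{ONB} directly, whereas the paper expands $\varphi_s(d)\31^s_j$ via Definition~\ref{def:finite-type}(2) as $\sum_n\rho_s(d_{n,j})\31^s_n$; by orthonormality the coefficients $d_{n,j}$ are exactly your inner products, so the computations coincide.
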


\begin{proof} Assume $\tau$ is a trace on $A$.  Then \eqref{def:state-from-induced} implies
\begin{equation}\label{eq:ometau-onA}
\omega_\tau(i_e(a))=\frac 1{\zeta_N(\beta)}\sum_{s\in P}N(s)^{-\beta}\sum_{j=0}^{N_s-1}
\tau(\langle \31^s_j,\varphi_s(a)\31^s_j\rangle)
\end{equation}
for each $a\in A$. Let $c,d\in A$. To prove that $\omega_\tau(i_e(cd))=\omega_\tau(i_e(dc))$ it suffices to show that
\begin{equation}\label{eq:omegatau-trace-condition}
\sum_{j=0}^{N_s-1}\tau(\langle \31^s_j,\varphi_s(cd)\31^s_j\rangle)=\sum_{j=0}^{N_s-1}\tau(\langle \31^s_j,\varphi_s(dc)\31^s_j\rangle)
\end{equation}
for $c,d\in A$.
Using Definition~\ref{def:finite-type}(2) we can write $\varphi_s(d)\31^s_j=\sum_{n=0}^{N_s-1}\rho_s(d_{n,j})\31^s_n$  and $\varphi_s(c)\31^s_l=\sum_{m=0}^{N_s-1}\rho_s(c_{m, n})\31^s_m$, for every $j,l=0, \dots ,N_s-1$.

Then the left-hand side of \eqref{eq:omegatau-trace-condition} can be written as follows
\begin{align}
\sum_{j=0}^{N_s-1}\tau(\langle \31^s_j,\varphi_s(cd)\31^s_j\rangle)
&=\sum_{j=0}^{N_s-1}\tau(\langle \31^s_j,\varphi_s(c)( \sum_{n=0}^{N_s-1}\rho_s(d_{n,j})\31^s_n )\rangle)\notag\\
&=\sum_{j=0}^{N_s-1}\tau(\langle \31^s_j,\sum_{n=0}^{N_s-1} \sum_{m=0}^{N_s-1}\rho_s(c_{m,n}d_{n,j})\31^s_m \rangle))\notag\\
&=\sum_{j=0}^{N_s-1}\sum_{n=0}^{N_s-1}\tau(c_{j,n}d_{n,j}) \;\;\;
\text{ by Definition~\ref{def:finite-type}(3)}\notag\\
&=\sum_{j=0}^{N_s-1}\sum_{n=0}^{N_s-1}\tau(d_{n,j}c_{j,n}) \;\;\;
\text{ since }\tau \text{ is a trace}.
\label{eq:cd}
\end{align}
Similarly, the right-hand side of \eqref{eq:omegatau-trace-condition} is
\begin{align}
\sum_{j=0}^{N_s-1}\tau(\langle \31^s_j,\varphi_s(dc)\31^s_j\rangle)
&=\sum_{j=0}^{N_s-1}\tau(\langle \31^s_j,\varphi_s(d)(\sum_{m=0}^{N_s-1}\rho_s(c_{m,j})\31^s_m )\rangle)\notag\\
&=\sum_{j=0}^{N_s-1}\tau(\langle \31^s_j,\sum_{m=0}^{N_s-1} \sum_{n=0}^{N_s-1}\rho_s(d_{n,m}c_{m,j})\31^s_n \rangle))\notag\\
&=\sum_{j=0}^{N_s-1}\sum_{m=0}^{N_s-1}\tau(d_{j,m}c_{m,j}) \;\;\;
\text{ by Definition~\ref{def:finite-type}(3)}\label{eq:dc}.
\end{align}
Thus \eqref{eq:cd} and \eqref{eq:dc} show that $\omega_\tau$ is a trace on $i_e(A)$.
\end{proof}

\begin{lemma}\label{lemma:2}
The state $\omega_\tau$ of $\Ff$ given by \eqref{def:state-from-induced} satisfies \eqref{fintypeKMScond-jl}.
\end{lemma}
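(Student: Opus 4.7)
The plan is to apply the definition \eqref{def:state-from-induced} of $\omega_\tau$ directly to the element
$y := i_s(\31^s_j) i_e(a) i_s(\31^s_l)^*$. Using the Toeplitz representation identity
$i_s(\31^s_j) i_e(a) = i_s(\rho_s(a)\31^s_j)$, I would first rewrite
$y = i^{(s)}(\theta_{\rho_s(a)\31^s_j,\, \31^s_l}) \in B_s$. Thus formula \eqref{def:state-from-induced} applies with $r = s$, $\xi = \rho_s(a)\31^s_j$ and $\eta = \31^s_l$, and everything reduces to simplifying the resulting double sum.

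Next I would compute the two inner products appearing in the generic summand. Right $A$-linearity of the inner product together with orthonormality from Definition~\ref{def:finite-type}(3) gives
$\langle \xi, \31^s_{k(s)}\rangle = a^*\,\delta_{j,k(s)}$ and $\langle \eta, \31^s_{k(s)}\rangle = \delta_{l, k(s)}$. A summand is therefore nonzero only when $j = l$ and $k(s) = j$, which already produces the Kronecker factor $\delta_{j,l}$ required by \eqref{fintypeKMScond-jl}, and in the nonzero case the summand becomes
$\tau(\langle \varphi_{s^{-1}t}(a^*)\31^{s^{-1}t}_{k'}, \31^{s^{-1}t}_{k'}\rangle)$.

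In the case $j = l$, bijectivity of $\mathfrak{m}_{s, s^{-1}t}$ lets me parameterize each $k \in \{0, \dots, N_t - 1\}$ with $k(s) = j$ by its second component $k' = k(s^{-1}t) \in \{0, \dots, N_{s^{-1}t} - 1\}$. Using the adjointability identity
$\langle \varphi_{s^{-1}t}(a^*)\31^{s^{-1}t}_{k'}, \31^{s^{-1}t}_{k'}\rangle = \langle \31^{s^{-1}t}_{k'}, \varphi_{s^{-1}t}(a)\31^{s^{-1}t}_{k'}\rangle$
to rewrite the inner product in the form appearing in \eqref{eq:ometau-onA}, I would then substitute $r = s^{-1}t$, so that $t$ ranging over $\{t \in P : s \leq t\}$ becomes $r$ ranging over $P$, and multiplicativity of $N$ yields $N(t)^{-\beta} = N(s)^{-\beta} N(r)^{-\beta}$. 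Factoring $N(s)^{-\beta}$ out of the remaining sum, what is left is precisely the expression \eqref{eq:ometau-onA} for $\omega_\tau(i_e(a))$, giving $\omega_\tau(y) = \delta_{j,l} N(s)^{-\beta}\omega_\tau(i_e(a))$ as required.

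The only mildly delicate step is the reindexing: verifying that the $k \in \{0,\dots,N_t-1\}$ with $k(s) = j$ are really in bijection with $k' \in \{0,\dots,N_{s^{-1}t}-1\}$, and that $t \mapsto s^{-1}t$ is a bijection between $\{t \in P : s \leq t\}$ and $P$. Both facts are clean consequences of the bijectivity of $\mathfrak{m}_{s, s^{-1}t}$ and of the lattice order structure on $(G,P)$, but they provide the substantive combinatorial input; the rest of the argument is just algebraic manipulation, akin to the computations carried out immediately before the statement of this lemma to verify \eqref{eq:sum-of-vector-states}.
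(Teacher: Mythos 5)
Your proof is correct and follows essentially the same route as the paper's: both apply \eqref{def:state-from-induced} directly to the rank-one element, use orthonormality (Definition~\ref{def:finite-type}(3)) to kill all summands except those with $k(s)=j=l$, and then reindex via $t\mapsto s^{-1}t$ together with multiplicativity of $N$ to recover $N(s)^{-\beta}\omega_\tau(i_e(a))$ from \eqref{eq:ometau-onA}. The only cosmetic difference is that you absorb $a$ into the left leg $\xi$ (which costs you one extra adjointability step $\langle\varphi(a^*)\zeta,\zeta\rangle=\langle\zeta,\varphi(a)\zeta\rangle$), whereas the paper absorbs $a^*$ into the right leg $\eta$ and lands on the form in \eqref{eq:ometau-onA} directly.
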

\begin{proof}  Let $a\in A$ and $n,m\in \{0, \dots ,N_r-1\}$. When computing  $\omega_\tau(i_r(\31^r_n)i_e(a)i_r(\31^r_m)^*)$
using \eqref{def:state-from-induced}, we have $\xi=\31^r_n$ and $\eta=\rho_r(a^*)\31^r_m$.
Thus, in the summation
over $j=0, \dots ,N_s-1$, the terms $\langle \31^r_n, \31^r_{j(r)}\rangle$ are zero for every $s\geq r$  unless
$j(r)=n$, that is unless $j=n\cdot {j'}$ for $j'=0, \dots ,N_{r^{-1}s}-1$. Hence the summation
over $j$ is simply a summation
over $j'$. Moreover, since $\langle \rho_r(a^*)\31^r_m, \31^r_{j(r)}\rangle=a\langle \31^r_m, \31^r_{n}\rangle$, we
also get a zero contribution unless $n=m$. In other words, the left-hand side of \eqref{fintypeKMScond-jl} is
\begin{equation}\label{omegatau-left}
\omega_\tau(i_r(\31^r_n)
i_e(a)i_r(\31^r_m)^*)=\delta_{m,n} \sum_{\{s\in P: r\leq s\}} \frac {N(s)^{-\beta}}{\zeta_N(\beta)}
\sum_{j'=0}^{N_{r^{-1}s-1}}\tau( \langle\31^{r^{-1}s}_{j'}, \varphi_{r^{-1}s}(a)  \31^{r^{-1}s}_{j'}\rangle).
\end{equation}
Now applying \eqref{eq:ometau-onA} we can rewrite the right-hand side of \eqref{fintypeKMScond-jl} as follows
\begin{align*}
\delta_{n,m}N(r)^{-\beta}\omega_\tau(i_e(a))
&=\delta_{n,m} \frac 1{\zeta_N(\beta)}\sum_{q\in P}N(r)^{-\beta}N(q)^{-\beta}\sum_{l=0}^{N_q-1}
\tau(\langle \31^q_l,\varphi_q(a)\31^q_l\rangle)\\
&=\delta_{n,m} \frac 1{\zeta_N(\beta)}\sum_{q\in P}N(rq)^{-\beta}\sum_{l=0}^{N_q-1}
\tau(\langle \31^q_l,\varphi_q(a)\31^q_l\rangle)\\
&=\delta_{n,m}\frac 1{\zeta_N(\beta)}\sum_{\{s\in P: r\leq s\}}N(s)^{-\beta}\sum_{l=0}^{N_{r^{-1}s}-1}
\tau(\langle \31^{r^{-1}s}_l,\varphi_{r^{-1}s}(a)\31^{r^{-1}s}_l\rangle);
\end{align*}
comparing this last term with \eqref{omegatau-left} proves the claimed scaling identity.
\end{proof}

\begin{proof}[Proof of Theorem~\ref{thm:KMsbeta-from-tracetau}.] For every $\beta>\beta_c$, the
assignment $\tau\mapsto \omega_\tau\circ\Phi^\delta$ is continuous between compact Hausdorff spaces. It also respects convex linear combinations and weak$^*$-limits. If $\tau$ is a tracial state of
$A$, then $\omega_\tau$ is a trace on $\Ff$ by Theorem~\ref{thm:from-trace-A-to-trace-F}, which applies due to
Lemma~\ref{lemma:1} and Lemma~\ref{lemma:2}. Hence $\omega_\tau\circ \Phi^\delta$ is a KMS$_\beta$ state.
\end{proof}

\begin{prop}
Suppose that $A$ contains a proper isometry. Let $X$ be a compactly aligned product system
of finite type over $P$
of right Hilbert $A$--$A$-bimodules such that $\mathfrak{m}_{s,r}$
is bijective for all $s,r\in P$. Let $N:G\to (0,\infty)$ be a homomorphism such that $N(s)=N_s$ for all
$s\in P$, and let $\sigma^N$ be the corresponding dynamics on $\NT(X)$. Then there are no KMS$_\beta$
states of $(\NT(X),\sigma^N)$ for $\beta<1$.
\end{prop}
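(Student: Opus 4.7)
My strategy is to derive $\beta\geq 1$ from the scaling identity for KMS$_\beta$ states applied to the orthonormal basis of a fibre, together with the observation that $i^{(s)}(I_s)$ is a projection in $\NT(X)$ bounded by $1$.

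First I would let $\omega$ be a KMS$_\beta$ state. A single application of the KMS condition to the monomial $i_s(\xi)i_s(\eta)^*$, using $\sigma_{i\beta}^N(i_s(\xi))=N_s^{-\beta}i_s(\xi)$, gives the scaling identity
\[
\omega\bigl(i_s(\xi)i_s(\eta)^*\bigr)=N_s^{-\beta}\,\omega\bigl(i_e(\langle\eta,\xi\rangle_s)\bigr),\qquad \xi,\eta\in X_s,\ s\in P,
\]
with no injectivity hypothesis on $N$ required for this same-index version. Specialising to $\xi=\eta=\31^s_j$ produces $\omega(i_s(\31^s_j)i_s(\31^s_j)^*)=N_s^{-\beta}$. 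By the Toeplitz relation (T4) and the orthonormality of the basis, $\{i_s(\31^s_j)i_s(\31^s_j)^*\}_{j=0}^{N_s-1}$ are mutually orthogonal projections whose sum equals $i^{(s)}(I_s)$, by \eqref{eq:Is-compact}. Since $i^{(s)}(I_s)$ is itself a projection in $\NT(X)$, summing the previous equality over $j$ yields
\[
N_s^{\,1-\beta}=\omega\bigl(i^{(s)}(I_s)\bigr)\leq 1,\qquad s\in P.
\]
In particular, whenever some $N_s>1$, this forces $1-\beta\leq 0$, i.e.\ $\beta\geq 1$.

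The proper-isometry hypothesis on $A$ enters in order to produce such an $s\in P$. My expectation is that a non-unitary isometry $v\in A$, together with the product-system multiplication and the bijectivity of $\mathfrak{m}_{s,r}$ (which gives $N_{sr}=N_sN_r$), rules out the degenerate situation in which every fibre $X_s$ has a one-element basis --- a situation in which $\sigma^N$ would itself be trivial. Granted this, for any $\beta<1$ the displayed inequality is violated by some $s\in P$, contradicting the existence of $\omega$. The principal obstacle I anticipate is precisely this last step: namely, converting the algebraic hypothesis that $A$ contains a proper isometry into the combinatorial conclusion that some basis size $N_s$ strictly exceeds $1$. I would approach this by decomposing $X_s=\varphi_s(vv^*)X_s\oplus\varphi_s(1-vv^*)X_s$ and exploiting the coherence conditions in Definition~\ref{def:finite-type} to show that a one-element basis for every $s$ cannot coexist with a non-unitary isometry in the coefficient algebra.
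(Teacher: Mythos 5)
Your argument is essentially the paper's own. The paper runs the identical computation with the basis vectors twisted by the proper isometry $a$: it shows $\phi\bigl(i_s(\varphi_s(a)\31^s_j)i_s(\varphi_s(a)\31^s_k)^*\bigr)=\delta_{j,k}N(s)^{-\beta}$ (using $a^*a=1$ so that the inner products collapse to $\delta_{j,k}$), observes that $\sum_{j=0}^{N_s-1}i_s(\varphi_s(a)\31^s_j)i_s(\varphi_s(a)\31^s_j)^*$ is a projection, and concludes $N_s^{1-\beta}\le \phi(1)=1$ --- exactly your inequality, since the twist by $a$ contributes nothing that the untwisted vectors $\31^s_j$ do not already give. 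Concerning the obstacle you flag at the end: the paper does not address it at all, passing directly from $N_s^{1-\beta}\le 1$ to $\beta\ge 1$; and the proper-isometry hypothesis in fact cannot force some $N_s>1$ (take $X_s=A=\mathcal T$ the standard bimodule for every $s$, so $N_s=1$, $\sigma^N$ trivial, and any trace on $\NT(X)$ is a KMS$_\beta$ state for every $\beta$), so your proposed reduction via $\varphi_s(vv^*)$ would not succeed. The statement implicitly assumes the non-degenerate situation where some $N_s>1$; your proof is no less complete than the paper's on this point, and you were right to isolate it.
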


\begin{proof}
Suppose $\phi$ is a state of $\NT(X)$ which satisfies the KMS$_\beta$ condition for  some
$\beta>0$. Let $a\in A$ be a proper isometry. Then
$$
\phi(i_s(\varphi_s(a)\31^s_j)i_s(\varphi_s(a)\31^s_k)^*)
=N(s)^{-\beta}\phi(i_s(\31^s_k)^*i_e(a^*a)i_s(\31^s_j))=\delta_{j,k}N(s)^{-\beta}.
$$
Since $\sum_{j=0}^{N_s-1}i_s(\varphi_s(a)\31^s_j)i_s(\varphi_s(a)\31^s_j)^*$ is a projection, we have
 $$
 1= \phi(1)\geq \phi(\sum_{j=0}^{N_s-1}i_s(\varphi_s(a)\31^s_j)i_s(\varphi_s(a)\31^s_j)^*)=
\sum_{j=0}^{N_s-1}N(s)^{-\beta}=N_s^{1-\beta}.
 $$
Then necessarily $\beta\geq 1.$
\end{proof}

\begin{rmk}
\rm
We note that for a fixed tracial state $\tau$ of $A$, there exists a KMS$_\infty$ state $\omega_\infty$
of $\NT(X)$ obtained as the weak$^*$-limit of the KMS$_\beta$ states $\omega_\tau$ by letting
$\beta\to \infty$ in \eqref{def:state-from-induced}.
\end{rmk}

%%%%%%%%%%%%%%%%%%%%%%%%%%%%%%%%%%%%%%%%%%%%%%%%%%
%%%%%%%%%%%%%%%%%%%%%%%%%%%%%%%%%%%%%%%%%%%%%%%%%%

\section{Structure of the core $\Ff$ and reconstruction of KMS states}\label{section:core}

In this section, we start by identifying an action of $P$  by endomorphisms of the core $\Ff$ and a commutative
$C^*$-subalgebra $\Aa$ of $\Ff$. We then prove that the exact analogue of the reconstruction formula
for KMS$_\beta$ states of $\Tt(\NN\rtimes\NN^\times)$ from \cite[Lemma 10.1]{Lac-Rae2} is valid, with similar proofs, for $\NT(X)$ of a product system under mild assumptions on the semigroup $P$ and the function $\zeta_N$ defined in
(\ref{def:series}). Using this, we will prove surjectivity and injectivity of  the parametrization of KMS states given in Theorem~\ref{thm:KMsbeta-from-tracetau} for certain classes of product systems of finite type.

\begin{prop}\label{prop:endo-of-F}
Let $(G, P)$ be lattice ordered and $X$ a product system over $P$ of finite type such that the maps
$\mathfrak{m}_{s,r}$ are bijective for all $s,r\in P$. The assignment
\begin{equation}\label{eq:def-alpha-s}
\alpha_s(y)=\sum_{j=0}^{N_s-1} i_s(\31^s _j) yi_s(\31^s _j)^*
\end{equation}
for $s\in P$ and $y\in \Ff$ defines an action $\alpha$ of $P$ on $\Ff$ by injective endomorphisms.
\end{prop}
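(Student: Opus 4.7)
The plan is to verify in turn that each $\alpha_s$ is a well-defined $*$-endomorphism of $\Ff$, that each $\alpha_s$ is injective, and that $\alpha_s\circ\alpha_r=\alpha_{sr}$ for all $s,r\in P$. The key computational tool throughout will be the orthonormality relation
$$ i_s(\31^s_j)^*i_s(\31^s_k)=i_e(\langle \31^s_j,\31^s_k\rangle_s)=\delta_{j,k}\,1 $$
coming from Definition~\ref{def:finite-type}(3), together with the multiplicativity identity $i_s(\31^s_j)i_r(\31^r_k)=i_{sr}(\31^{sr}_{j\cdot k})$ from Definition~\ref{def:finite-type}(4). Note $\alpha_e=\id$ since $X_e={}_AA_A$ has the singleton orthonormal basis $\{1\}$, so $N_e=1$.

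For well-definedness and multiplicativity, I would first compute on spanning monomials: if $y=i_r(\xi)i_r(\eta)^*\in B_r$, then
$$ \alpha_s(y)=\sum_{j=0}^{N_s-1}i_{sr}(\31^s_j\xi)i_{sr}(\31^s_j\eta)^*\in B_{sr}\subset\Ff, $$
which shows $\alpha_s(\Ff)\subseteq\Ff$ by continuity and linearity. Each $\alpha_s$ is manifestly linear and $*$-preserving. For multiplicativity, sandwich the orthonormality relation:
$$ \alpha_s(y)\alpha_s(z)=\sum_{j,k}i_s(\31^s_j)y[i_s(\31^s_j)^*i_s(\31^s_k)]zi_s(\31^s_k)^* = \sum_j i_s(\31^s_j)yz i_s(\31^s_j)^*=\alpha_s(yz). $$

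For injectivity, given any $y\in\Ff$, compress $\alpha_s(y)$ by $i_s(\31^s_k)^*\,\cdot\,i_s(\31^s_k)$: the same orthonormality gives
$$ i_s(\31^s_k)^*\alpha_s(y)i_s(\31^s_k)=\sum_{j=0}^{N_s-1}\delta_{k,j}\delta_{j,k}\,y=y, $$
so $\alpha_s(y)=0$ forces $y=0$.

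The semigroup property is where bijectivity of $\mathfrak{m}_{s,r}$ enters. I would compute
$$ \alpha_s(\alpha_r(y))=\sum_{j=0}^{N_s-1}\sum_{k=0}^{N_r-1}i_s(\31^s_j)i_r(\31^r_k)\,y\,i_r(\31^r_k)^*i_s(\31^s_j)^* = \sum_{j,k}i_{sr}(\31^{sr}_{\mathfrak{m}_{s,r}(j,k)})\,y\,i_{sr}(\31^{sr}_{\mathfrak{m}_{s,r}(j,k)})^* $$
using Definition~\ref{def:finite-type}(4). Since $\mathfrak{m}_{s,r}:\{0,\dots,N_s-1\}\times\{0,\dots,N_r-1\}\to\{0,\dots,N_{sr}-1\}$ is assumed bijective (so $N_{sr}=N_sN_r$), reindexing the double sum by $l=\mathfrak{m}_{s,r}(j,k)$ turns this into $\sum_{l=0}^{N_{sr}-1}i_{sr}(\31^{sr}_l)\,y\,i_{sr}(\31^{sr}_l)^*=\alpha_{sr}(y)$.

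There is no serious obstacle; the main point to keep an eye on is that bijectivity of $\mathfrak{m}_{s,r}$ is both necessary (for the reindexing above) and sufficient (so $N_{sr}$ exhausts the new orthonormal basis) to make the composition identity come out correctly. Everything else reduces to careful bookkeeping with the two identities highlighted at the outset.
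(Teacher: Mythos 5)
Your proposal is correct and follows essentially the same route as the paper's own proof: multiplicativity via the orthonormality relation $i_s(\31^s_j)^*i_s(\31^s_k)=\delta_{j,k}1$, injectivity by compressing with a basis element, and the semigroup law by reindexing the double sum through the bijection $\mathfrak{m}_{s,r}$. The only difference is that you spell out a few details the paper leaves implicit (the verification that $\alpha_s(B_r)\subseteq B_{sr}$ and the observation that $\alpha_e=\id$), which is harmless.
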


\begin{proof}
Clearly $\alpha_s:\Ff\to\Ff$ is well-defined, and $\alpha_s(y_1)\alpha_s(y_2)=\alpha_s(y_1y_2)$ for all
 $s\in P$ and  $y_1,y_2\in \Ff$ follows by Definition~\ref{def:finite-type}(3). If $\alpha_s(y)=0$ then
$0=i_s(\31^s_0)^*\alpha_s(y)i_s(\31^s_0)=y$, so each $\alpha_s$ is injective. Let $s,r\in P$. Then
\begin{align*}
\alpha_s\alpha_r(y)
&=\alpha_s\left(\sum_{j=0}^{N_s-1}i_r(\31^r_l)yi_r(\31^r_l)^*\right)\\
&=\sum_{j=0}^{N_s-1}\sum_{l=0}^{N_r-1}i_{sr}(\31^{sr}_{j\cdot l})y (i_{sr}(\31^{sr}_{j\cdot l})^*) \\
&=\sum_{k=0}^{N_{sr}-1} i_{sr}(\31^{sr}_{k})y (i_{sr}(\31^{sr}_{k})^*)=\alpha_{sr}(y),
\end{align*}
which proves that $\alpha$ is an action of $P$ by endomorphisms of $\Ff$.
\end{proof}

We note that in \cite{HuR} similar constructions in the case of a single Hilbert bimodule and at the level of relative
Cuntz-Pimsner algebras (modeling Exel crossed products) are obtained.

\begin{cor} The maps $\alpha_r^q:B_r\to B_q$ given by $\alpha_r^q:=\alpha_{r^{-1}q}$ for $r\leq q$
give rise to a direct limit $\varinjlim_{r\in P}(B_r, \alpha_r^q)_{r\leq q}$ with injective homomorphisms. The canonical embeddings
$\alpha^r$  of $B_r$ into $\varinjlim_{r\in P}B_r$ give rise to an increasing union such that
$$
\Ff=\overline{\bigcup_{r\in P} \alpha^r(B_r)}.
$$
\end{cor}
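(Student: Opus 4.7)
The plan has three parts: verifying that the $\alpha_r^q$ form a directed system with injective bonding maps, invoking the standard construction of the direct limit of $C^*$-algebras, and identifying the resulting limit with $\Ff$.

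First I would check $\alpha_{r^{-1}q}(B_r)\subseteq B_q$ whenever $r\leq q$. Using \eqref{eq:def-alpha-s} and the multiplication in $X$, a direct calculation on generators $i^{(r)}(\theta_{\xi,\eta})\in B_r$ gives
\[
\alpha_{r^{-1}q}(i^{(r)}(\theta_{\xi,\eta}))=i^{(q)}\Bigl(\sum_{j=0}^{N_{r^{-1}q}-1}\theta_{\31^{r^{-1}q}_j\xi,\,\31^{r^{-1}q}_j\eta}\Bigr)\in B_q,
\]
where commutativity of $P$ identifies $(r^{-1}q)r$ with $q$ and bijectivity of $\mathfrak{m}_{r^{-1}q,r}$ makes the indexing transparent. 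Extending by linearity and density yields $\alpha_r^q:=\alpha_{r^{-1}q}|_{B_r}:B_r\to B_q$, which is injective because $\alpha_{r^{-1}q}$ is injective on $\Ff$ by Proposition~\ref{prop:endo-of-F}.

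For functoriality, for $r\leq q\leq p$ in $P$, since $\alpha$ is an action of the abelian semigroup $P$ on $\Ff$, one computes
\[
\alpha_q^p\circ\alpha_r^q=\alpha_{q^{-1}p}\circ\alpha_{r^{-1}q}|_{B_r}=\alpha_{(q^{-1}p)(r^{-1}q)}|_{B_r}=\alpha_{r^{-1}p}|_{B_r}=\alpha_r^p,
\]
together with $\alpha_r^r=\alpha_e|_{B_r}=\id_{B_r}$ (using $N_e=1$ with $\31^e_0=1_A$). Hence $(B_r,\alpha_r^q)_{r\leq q}$ is a directed system of $C^*$-algebras over the directed poset $(P,\leq)$. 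Standard $C^*$-algebraic direct-limit theory then provides $\varinjlim_{r\in P}B_r$ with injective canonical embeddings $\alpha^r:B_r\to\varinjlim_{r\in P}B_r$, satisfying $\alpha^r(B_r)\subseteq\alpha^q(B_q)$ for $r\leq q$ and $\varinjlim_{r\in P}B_r=\overline{\bigcup_r\alpha^r(B_r)}$.

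The main obstacle is the identification of $\varinjlim B_r$ with $\Ff$. By \cite[Lemma 3.6]{CLSV}, $\Ff=\overline{\bigcup_F B_F}$ as $F$ ranges over finite $\vee$-closed subsets of $P$, and since $P$ is a lattice, each such $F$ has a maximum $s_F\in P$; moreover $B_F=\sum_{s\in F}i^{(s)}(\Kk(X_s))$ by \eqref{def:B_F}, so $\Ff$ coincides with the closure of the linear span of $\bigcup_{r\in P}B_r$. To finish, I would construct a compatible family of $*$-homomorphisms $B_r\to\Ff$ and apply the universal property of $\varinjlim B_r$; injectivity then follows from injectivity of all bonding maps, and density from the previous remark. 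Producing this compatible family is the delicate point, because the naive inclusions $B_r\hookrightarrow\Ff$ do not commute with the $\alpha_r^q$. The natural route is to pass to the Fock representation $l_*$, exploit Nica covariance to write $l^{(r)}(T)\cdot l^{(q)}(I_q)=l^{(q)}(i_r^q(T))$ for $r\leq q$, and verify that compression by the decreasing family of projections $\{i^{(s)}(I_s)\}_{s\in P}$ reconciles the embeddings $B_r\hookrightarrow \Ff$ with the bonding maps $\alpha_r^q$ at the level of the direct limit.
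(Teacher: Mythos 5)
Your first two steps are correct and coincide with what the paper does: the containment $\alpha_{r^{-1}q}(B_r)\subseteq B_q$ (the paper cites Definition~\ref{def:finite-type}(2) where you write out the formula on rank-one operators) and the cocycle identity $\alpha_q^p\circ\alpha_r^q=\alpha_r^p$ from Proposition~\ref{prop:endo-of-F}. The genuine gap is your third step, and you have put your finger on exactly the sore point. The paper disposes of the identification $\varinjlim_r(B_r,\alpha_r^q)\cong\Ff$ in one sentence, asserting that the inclusions $B_r\hookrightarrow\Ff$ are compatible with the connecting maps; as you rightly suspect, they are not: compatibility at $r=e$ would force $\alpha_q(1)=1$ in $\Ff$, which fails because $l_*$ is faithful and $l_*(\alpha_q(1))=l^{(q)}(I_q)$ is the proper projection of $F(X)$ onto $\bigoplus_{s\geq q}X_s$. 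Your proposed repair does not close this gap. Compressing $i^{(r)}(T)$ by $\alpha_q(1)=i^{(q)}(I_q)$ gives, via Nica covariance, $i^{(q)}(i_r^q(T))$, i.e.\ $T\otimes_A I_{r^{-1}q}$ under $X_q\cong X_r\otimes_A X_{r^{-1}q}$, whereas the bonding map $\alpha_r^q$ sends $i^{(r)}(\theta_{\xi,\eta})$ to $i^{(q)}\bigl(\sum_j\theta_{\31^{r^{-1}q}_j\xi,\,\31^{r^{-1}q}_j\eta}\bigr)$, which is $I_{r^{-1}q}\otimes_A\theta_{\xi,\eta}$ under $X_q\cong X_{r^{-1}q}\otimes_A X_r$; these are different operators on $X_q$ in general, so the compressions do not intertwine the inclusions with the $\alpha_r^q$.

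More seriously, no compatible family $\iota_r:B_r\to\Ff$ with dense increasing images can exist, so the final displayed identity is unobtainable by any route. In the direct limit all the units $i^{(r)}(I_r)=1_{B_r}$ are identified with $1_{B_e}=1$, and in concrete cases the limit is simply too small: take the trivial product system over $P=\NN$ with every fibre equal to $\CC$, so that $\NT(X)$ is the classical Toeplitz algebra generated by one isometry $S$. Then each $B_n=\CC\,S^nS^{*n}$ is one-dimensional, every $\alpha_n^q$ is an isomorphism, and $\varinjlim_n B_n\cong\CC$, while $\Ff=\clsp\{S^nS^{*n}:n\in\NN\}$ is infinite-dimensional. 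The statement that is actually available, and that your own observation $\Ff=\clsp\bigl(\bigcup_r B_r\bigr)$ points toward, is $\Ff=\overline{\bigcup_F B_F}$ over finite $\vee$-closed subsets $F$, with the honest inclusions $B_F\subseteq B_{F'}$ as connecting maps. So your instinct that producing the compatible family is "the delicate point" was exactly right; the difficulty is not one you failed to overcome but one inherent in the statement, which the paper's own one-line argument also does not resolve.
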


\begin{proof}
Definition~\ref{def:finite-type}(2) implies that $\alpha_s(B_r)\subseteq B_{rs}$ for all $r,s\in P$, so the maps
$\alpha_r^q$ are well-defined from $B_r$ to $B_q$ for all $r\leq q$. The fact that $\alpha$ is an action of $P$ implies that
$\alpha_r^s=\alpha_s^q\circ \alpha_r^q$ when $r\leq q\leq s$, so the maps are compatible and give therefore rise to a direct system, as claimed. The inclusion maps $B_r\hookrightarrow \Ff$ for $r\in P$ are compatible with
the connecting maps $\alpha_r^q$, and combine to give an injective homomorphism from
$\varinjlim_{r\in P}B_r$ into $\Ff$, which is also surjective.
\end{proof}

\begin{prop}\label{prop:product-alphas}
For all $s,r\in P$ we have $\alpha_s(1)\alpha_r(1)=\alpha_{s\vee r}(1)$ and, consequently,
 $\Aa:=\clsp\{\alpha_r(1): r\in P\}$ is a commutative $C^*$-subalgebra of $\Ff$.
\end{prop}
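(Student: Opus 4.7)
The plan is to first rewrite $\alpha_s(1)$ in a form that makes Nica covariance directly applicable. By \eqref{eq:Is-compact}, we have $I_s=\sum_{j=0}^{N_s-1}\theta_{\31^s_j,\31^s_j}\in\Kk(X_s)$, so applying the homomorphism $i^{(s)}:\Kk(X_s)\to\NT(X)$ gives
\begin{equation*}
\alpha_s(1)=\sum_{j=0}^{N_s-1}i_s(\31^s_j)i_s(\31^s_j)^*=i^{(s)}(I_s).
\end{equation*}
This is the key identification: the generators of $\Aa$ are precisely the images under $i^{(\cdot)}$ of the unit operators in each fiber.

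Next, since $(G,P)$ is lattice ordered, $s\vee r<\infty$ for all $s,r\in P$, and $I_s\in\Kk(X_s)$, $I_r\in\Kk(X_r)$. Applying Nica covariance \eqref{eq:Nica-cov-rep} of the universal representation $i_X$ yields
\begin{equation*}
\alpha_s(1)\alpha_r(1)=i^{(s)}(I_s)i^{(r)}(I_r)=i^{(s\vee r)}\!\bigl(i_s^{s\vee r}(I_s)\,i_r^{s\vee r}(I_r)\bigr).
\end{equation*}
From the definition $i_s^{s\vee r}(T)=F^{s,s^{-1}(s\vee r)}(T\otimes_A I_{s^{-1}(s\vee r)})(F^{s,s^{-1}(s\vee r)})^*$ we get $i_s^{s\vee r}(I_s)=I_{s\vee r}$, and similarly $i_r^{s\vee r}(I_r)=I_{s\vee r}$. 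Therefore
\begin{equation*}
\alpha_s(1)\alpha_r(1)=i^{(s\vee r)}(I_{s\vee r})=\alpha_{s\vee r}(1),
\end{equation*}
which is the first claim.

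For the consequence, each $\alpha_r(1)=i^{(r)}(I_r)$ is a self-adjoint projection since $I_r$ is. The formula just proved shows that any finite product of generators collapses to a single generator indexed by the join of the indices, so the linear span $\lsp\{\alpha_r(1):r\in P\}$ is already closed under multiplication and under the adjoint. Since $\vee$ is commutative, these generators pairwise commute, making this span a commutative $*$-algebra; its norm closure $\Aa$ is then the desired commutative $C^*$-subalgebra of $\Ff$. The only non-routine step is verifying $i_s^{s\vee r}(I_s)=I_{s\vee r}$, and this follows directly from the isomorphism $F^{s,s^{-1}(s\vee r)}$ being unitary.
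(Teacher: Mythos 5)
Your proof is correct, and it takes a genuinely different route from the paper's. The paper expands both projections in the orthonormal bases and invokes the explicit product formula \eqref{Neals-precise-formula} for $i_s(\31^s_j)^*i_r(\31^r_l)$; it then matches indices, using bijectivity of the maps $\mathfrak{m}_{s,r}$, to see that the resulting double sum over $(j,l)$ reassembles into $\sum_{k=0}^{N_{s\vee r}-1}i_{s\vee r}(\31^{s\vee r}_k)i_{s\vee r}(\31^{s\vee r}_k)^*$. You instead identify $\alpha_s(1)=i^{(s)}(I_s)$ via \eqref{eq:Is-compact} and apply Nica covariance \eqref{eq:Nica-cov-rep} directly to the compact operators $I_s$ and $I_r$, reducing everything to the operator identity $i_s^{s\vee r}(I_s)=I_{s\vee r}$, which holds because $F^{s,s^{-1}(s\vee r)}$ is a unitary isomorphism of Hilbert bimodules. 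Your argument is shorter, avoids all index bookkeeping, and is actually more general: it uses neither the bijectivity of the $\mathfrak{m}_{s,r}$ nor the orthonormal bases beyond the single fact that $I_s\in\Kk(X_s)$ for every $s$, so it would prove the identity $\alpha_s(1)\alpha_r(1)=\alpha_{s\vee r}(1)$ for any compactly aligned product system whose fibres have compact identity operators (granted one still needs the bases to even define $\alpha_s$ by \eqref{eq:def-alpha-s}, but $i^{(s)}(I_s)$ makes sense without them). What the paper's computation buys is consistency of style with the surrounding results and an explicit record of which basis elements pair up nontrivially in the product, which is the form of the information reused later, for instance in the orthogonality argument inside the proof of Lemma~\ref{lemma:reconstruction}.
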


\begin{proof}
By formula (\ref{eq:def-alpha-s}), $\alpha_s(1)=\sum_{j=0}^{N_s-1}i_s(\31^s_j)i_s(\31^s_j)^*$ and $\alpha_r(1)=\sum_{l=0}^{N_r-1}i_r(\31^r_l)i_r(\31^r_l)^*$ for $s,r\in P$. It follows from
\eqref{Neals-precise-formula} that $i_s(\31^s_j)^*i_r(\31^r_l)$ is a sum of terms
indexed over $i=0, \dots ,N_{s\vee r}-1$ where non-zero terms occur when $i(s)=j$
and $i(r)=l$ simultaneously. Thus
$$
\alpha_s(1)\alpha_r(1)=\sum_{j=0}^{N_s-1}\sum_{l=0}^{N_r-1}i_{s\vee r}
(\31^{s\vee r}_{j\cdot j'})i_{s\vee r}(\31^{s\vee r}_{l'\cdot l})^*,
$$
where the elements $l'=0, \dots, N_{r^{-1}(s\vee r)}-1$ and $j'=0, \dots, N_{s^{-1}(s\vee r)}-1$ are
such that $l'\cdot l=j\cdot j'$ in $\{0, \dots ,N_{s\vee r}-1\}$. Therefore $\alpha_s(1)\alpha_r(1)=
\alpha_{s\vee r}(1)$, as claimed.
\end{proof}

We are now ready to prove a reconstruction formula for KMS states similar to the namesake
formula in \cite[Lemma 10.1]{Lac-Rae2}.
As assumed so far in this section, $(G, P)$ is lattice ordered and $X$ is a product system over $P$ of finite type such that the maps $\mathfrak{m}_{s,r}$ are bijective for all $s,r\in P$. Let $N:G\to (0, \infty)$ be an injective homomorphism such that $N(s)=N_s$ for all $s\in P$, and assume that ${\zeta}^0(\beta):=\sum_{s\in P}N_s^{-\beta}$ admits an infinite Euler product
\begin{equation}\label{eq:inf-prod}
{\zeta}^0(\beta)=\prod_{s\in P}(1-N_s^{-\beta})^{-1}
\end{equation}
in an interval $(\beta_0, \infty)$ for some $\beta_0>0$.
Let $\Lambda$ be the directed set consisting of the finite subsets $F\subset P$ ordered under inclusion.

\begin{lemma}\label{lemma:reconstruction}
Assume $P$ has no non-trivial minimal elements. Let $\phi$ be a KMS$_\beta$ state of $\NT(X)$ and
let $(H_\phi, \xi_\phi, \pi_\phi)$ be the corresponding GNS representation. Denote by $\tilde{\phi}$ the vector state
 on $B(H_\phi)$ which extends $\phi$. For a finite subset $F$ of $P$ let
 \begin{equation}\label{eq:infinite-product}
Q_F:=\prod_{s\in F}\prod_{j=0}^{N_s-1}(I-\pi_\phi(i_s(\31^s_j)i_s(\31^s_j)^*)).
\end{equation}
Then $\{Q_F\}_{F\in \Lambda}$ converges in the weak operator topology to a projection
$Q$ in $\pi_\phi(\NT(X))^{''}$ which satisfies
\begin{enumerate}
\item $\tilde{\phi}(Q)={\zeta}^0(\beta -1)^{-1}$;
\item if ${\zeta}^0(\beta -1)<\infty$, then
$$\phi_{Q}(T):={\zeta}^0(\beta -1)\tilde{\phi}(Q \pi_\phi(T)Q), \;\; T\in \NT(X), $$
defines a state of $\NT(X)$;
\item if ${\zeta}^0(\beta -1)<\infty$, then
$$\phi(T)=\sum_{s\in P}\frac {N_s^{-\beta}}{{\zeta}^0(\beta -1)}\sum_{j=0}^{N_s-1}\phi_{Q}(i_s(\31^s_j)^*Ti_s(\31^s_j))$$
for all $T\in \NT(X)$.
\end{enumerate}
\end{lemma}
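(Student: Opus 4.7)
\emph{Plan.} I would follow the template of \cite[Lemma 10.1]{Lac-Rae2}. The first step is to rewrite $Q_F$: by Definition~\ref{def:finite-type}(3), for each fixed $s\in P$ the projections $\pi_\phi(i_s(\31^s_j)i_s(\31^s_j)^*)$, $j=0,\dots,N_s-1$, are mutually orthogonal and sum to $\pi_\phi(\alpha_s(1))$, so $\prod_{j=0}^{N_s-1}(I-\pi_\phi(i_s(\31^s_j)i_s(\31^s_j)^*))=I-\pi_\phi(\alpha_s(1))$, and hence $Q_F=\prod_{s\in F}(I-\pi_\phi(\alpha_s(1)))$. Proposition~\ref{prop:product-alphas} makes these factors commute, so each $Q_F$ is a projection; the net is decreasing under inclusion and converges in the strong (hence weak) operator topology to its infimum $Q=\bigwedge_F Q_F\in\pi_\phi(\NT(X))''$.

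For (1) and (2), set $V_{s,j}:=\pi_\phi(i_s(\31^s_j))$; this is an isometry since $V_{s,j}^*V_{s,j}=\pi_\phi(\langle\31^s_j,\31^s_j\rangle)=I$. Expanding $Q_F$ by inclusion-exclusion using Proposition~\ref{prop:product-alphas} gives $Q_F=\sum_{S\subseteq F}(-1)^{|S|}\pi_\phi(\alpha_{\bigvee S}(1))$, with the convention $\alpha_{\bigvee\emptyset}(1)=1$. The KMS$_\beta$ condition yields $\tilde\phi(V_{s,j}V_{s,j}^*)=N_s^{-\beta}\tilde\phi(V_{s,j}^*V_{s,j})=N_s^{-\beta}$, so $\tilde\phi(\pi_\phi(\alpha_t(1)))=N_t^{1-\beta}$ and $\tilde\phi(Q_F)=\sum_{S\subseteq F}(-1)^{|S|}N_{\bigvee S}^{1-\beta}$. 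The Euler-product hypothesis \eqref{eq:inf-prod}, together with the structural assumption on $P$ (which supplies the unique factorisation needed to collapse the alternating sum into a product, in the spirit of the prime factorisation used in Laca--Raeburn), allows one to take $F\nearrow P$ and conclude $\tilde\phi(Q)=\prod_{s\in P}(1-N_s^{1-\beta})=\zeta^0(\beta-1)^{-1}$. Then (2) is immediate: $\phi_Q(1)=\zeta^0(\beta-1)\tilde\phi(Q)=1$, and $\phi_Q(T^*T)=\zeta^0(\beta-1)\|\pi_\phi(T)Q\xi_\phi\|^2\geq 0$.

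For (3), since each $\pi_\phi(\alpha_s(1))$ is fixed by the normal extension of $\sigma^N$ to $\pi_\phi(\NT(X))''$, so is $Q$, and therefore $Q$ lies in the centralizer of $\tilde\phi$. Moreover $\pi_\phi(i_e(a))$ commutes with every $\pi_\phi(\alpha_r(1))$ (since $i_e(a)\alpha_r(1)=i^{(r)}(\varphi_r(a))=\alpha_r(1)i_e(a)$), hence with $Q$. Combined with the KMS identity $\sigma^N_{i\beta}(V_{s,j}^*)=N_s^\beta V_{s,j}^*$, a direct manipulation turns the right-hand side of the reconstruction formula into $\tilde\phi\bigl(\pi_\phi(T)R\bigr)$, where $R:=\sum_{s\in P}\sum_{j=0}^{N_s-1}V_{s,j}QV_{s,j}^*$. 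The principal obstacle is to show $R=I$. I would first verify pairwise orthogonality of the summands: for $s=r$ and $j\ne k$, use $V_{s,j}^*V_{s,k}=\pi_\phi(\langle\31^s_j,\31^s_k\rangle)=0$; for $s\ne r$, expand $V_{s,j}^*V_{r,k}$ via \eqref{Neals-precise-formula} and note that every summand contains either a factor $V_{s',m}^*$ with $s'=r^{-1}(s\vee r)>e$ or a factor $V_{r',\ell}$ with $r'=s^{-1}(s\vee r)>e$; sandwiching with $Q$ and using that $Q\pi_\phi(\alpha_t(1))=0$ for $t>e$ (which gives $V_{t,m}^*Q=0$ and $QV_{t,\ell}=0$), together with the commutation of $Q$ with the intervening $\pi_\phi(i_e(A))$ factors, kills every summand. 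Orthogonality makes $R$ a projection with $\tilde\phi(R)=\sum_s N_s\cdot N_s^{-\beta}\tilde\phi(Q)=\zeta_N(\beta)\zeta^0(\beta-1)^{-1}=1=\tilde\phi(I)$; thus $I-R$ is a positive contraction with $\tilde\phi(I-R)=0$, and Cauchy--Schwarz yields $\tilde\phi(\pi_\phi(T)(I-R))=0$ for every $T\in\NT(X)$, which is precisely the reconstruction formula.
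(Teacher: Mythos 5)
Your proposal is correct and takes essentially the same route as the paper's proof (both modelled on \cite[Lemma 10.1]{Lac-Rae2}): rewrite $Q_F$ as $\prod_{s\in F}(I-\pi_\phi(\alpha_s(1)))$, get (1)--(2) from multiplicativity of $\phi$ on these factors together with the Euler product, and get (3) from the mutual orthogonality of the projections $\pi_\phi(i_s(\31^s_j))Q\pi_\phi(i_s(\31^s_j))^*$, the scaling $\tilde{\phi}(\pi_\phi(i_s(\31^s_j))Q\pi_\phi(i_s(\31^s_j))^*)=N_s^{-\beta}\tilde{\phi}(Q)$, and a Cauchy--Schwarz argument showing the total mass is $1$. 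The only cosmetic differences are that you use inclusion--exclusion where the paper checks multiplicativity on pairs with $s\wedge r=e$, and you invoke the normal extension of the KMS condition to $\pi_\phi(\NT(X))''$ where the paper verifies the scaling of $Q$ directly as a limit of the identity \eqref{scaling} over the net $Q_F$.
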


\begin{proof}
The proof runs basically as in \cite{Lac-Rae2}, mutatis mutandis. We first
note that  Definition~\ref{def:finite-type}(3) implies that $\prod_{j=0}^{N_s-1}(1-i_s(\31^s_j)i_s(\31^s_j)^*)=
1-\alpha_s(1)$, and therefore $Q_F= \prod_{s\in F} (I-\pi_\phi(\alpha_s(1)))$ for all $F\in \Lambda$.
Let $s\wedge r=e$.
We claim that $\phi\bigl((1-\alpha_s(1))(1-\alpha_r(1))\bigr)=\phi(1-\alpha_s(1))\phi(1-\alpha_r(1))$. Since
$s\wedge r=e$, Proposition~\ref{prop:product-alphas} implies that
$\alpha_s(1)\alpha_r(1)=\alpha_{sr}(1)$. Now the KMS condition implies that
\begin{align*}
\phi\bigl((1-\alpha_s(1))(1-\alpha_r(1))\bigr)
&=1-\phi(\alpha_s(1))-\phi(\alpha_r(1))+\phi(\alpha_{sr}(1))\\
&=1-N_s^{1-\beta} -N_r^{1-\beta}+N_{sr}^{1-\beta}\\
&=(1-N_s^{1-\beta})(1-N_r^{1-\beta})\\
&=\phi(1-\alpha_s(1))\phi(1-\alpha_r(1)).
\end{align*}
Hence $\phi(Q_F)=\prod_{s\in F}(1-N_s^{1-\beta})$, and claims (1) and (2) follow exactly as in the proof of
the analogous statements in \cite{Lac-Rae2}.

For (3), we need to know that $\{\pi_\phi(i_s(\31^s_j))Q\pi_\phi(i_s(\31^s_j))^*:j=0, \dots, N_s-1, s\in P\}$
is a family of mutually orthogonal projections. Clearly these elements are projections. Let $s,r\in P$.
If $s=r$ then two different projections in the family correspond to $k\neq l\in\{0, \dots, N_s-1\}$,
and they are orthogonal because $\langle \31^s_k, \31^s_l\rangle=0$. If $s\neq r$ then either
$r'=s^{-1}(s\vee r)$ or $s'=r^{-1}(s\vee r)$ is different from the identity $e$ in $P$. Thus there is a $q\in P$
such that either $q\leq r'$ or $q\leq s'$. It follows from  \eqref{Neals-precise-formula}
that $i_s(\31^s_k)^*i_r(\31^r_l)$ is a sum over $i=0, \dots ,N_{s\vee r}-1$ of terms
$$
i_{r'}(\31^{r'}_{i(r')})i_{s'}(\31^{s'}_{i(s')})^*,
$$
where $i(r')=0, \dots, N_{r'}-1$, $i(s')=0,\dots ,N_{s'}-1$, $i(s)=k$ and $i(r)=l$. If for example there is a $q\in P$ with
$q\leq r'$, then in the product
$Q \pi_\phi(\31^s_k)^*\pi_\phi(\31^r_l) Q$ there will be a middle term of the form
$$
\prod_{p\in P}(I-\pi_\phi(\alpha_p(1)))\pi_\phi(i_q(\31^q_h))\pi_\phi(i_{q^{-1}r'}(\31^{q^{-1}r'}_g))
$$
for some $g=0,\dots, N_q-1$ and $h=0,\dots, N_{q^{-1}r'}-1$. Since $(1-\alpha_q(1))i_q(\31^q_h)=0$,
orthogonality follows. The argument is similar in the case $q\leq s'$.

In the next step we carry on as in \cite{Lac-Rae2} to conclude that
$$
\tilde{\phi}\bigg(\sum_{s\in P,\;0\leq j\leq N_s-1}\pi_\phi(i_s(\31^s_j))Q\pi_\phi(i_s(\31^s_j))^*\bigg)=1,
$$
except that we need to verify
\begin{equation}\label{forgotten}
\tilde\phi(\pi_\phi(\31^s_k)Q\pi_\phi(\31^s_k)^*)=N_s^{-\beta}\tilde\phi(Q)
\end{equation}
for every $k=0, \dots ,N_s-1$.
 A similar equality seems to be required in the first displayed formula from \cite[page 681]{Lac-Rae2}. As we shall
now see, \eqref{forgotten} follows from our scaling identity \eqref{scaling}. Indeed, by the definition of
$Q$ we can write
\begin{align*}
\tilde\phi(\pi_\phi(\31^s_k)Q\pi_\phi(\31^s_k)^*)
&=\bigl(\pi_\phi (\31^s_k)Q\pi_\phi(\31^s_k)^*\xi_\phi \mid \xi_\phi  \bigr)_{H_\phi}\\
&=\lim_F \bigl( Q_F \pi_\phi(\31^s_k)^*\xi_\phi \mid \pi_\phi(\31^s_k)^*\xi_\phi\bigr)_{H_\phi}\\
&=\lim_F \bigl( \prod_{r\in F} \pi_\phi((1-\alpha_r(1))i_s(\31^s_k)^*)\xi_\phi \mid  \pi_\phi(\31^s_k)^*\xi_\phi\bigr)_{H_\phi}\\
&=\lim_F \phi(i_s(\31^s_k) \prod_{r\in F}(1-\alpha_r(1)) i_s(\31^s_k)^*)\\
&=\lim_F N_s^{-\beta}\phi(\prod_{r\in F}(1-\alpha_r(1)) \;\;\;
\text{ by }\eqref{scaling};\\
\end{align*}
the last term is precisely the right-hand side of \eqref{forgotten}, as needed. Thus it follows that
$$
\phi(T)=\sum_{s=r, j=l} N_s^{-\beta}\tilde{\phi}(Q \pi_\phi(i_s(\31^s_j))^* \pi_\phi(T) \pi_\phi(i_r(\31^r_l))Q),
$$
for $T$ in $\NT(X)$, giving (3).
\end{proof}

%For $s\in P$ let ${\gamma}_s:\Ff\to \Ff$ be the positive, continuous linear map given  by
%\begin{equation}\label{def:gamma-s}
%{\gamma}_s(y)=\sum_{j=0}^{N_s-1}i_s(\31^s_j)^*yi_s(\31^s_j).
%\end{equation}

\begin{lemma}\label{lemma:phiQ-trace-onA}
Let $(G, P)$ be lattice ordered and let $X$ be an
associative product system of finite type over $P$ such that $\mathfrak{m}_{s,r}$ are bijective and respect co-prime pairs, for all $s,r\in P$. Let  $\zeta_0$  be given by \eqref{eq:inf-prod} and assume that $P$ has no non-trivial minimal elements. Given a KMS$_\beta$ state $\phi$ of $\NT(X)$, where $\beta>\beta_0$, let $\phi_Q$ be its associated state constructed in  Lemma~\ref{lemma:reconstruction}.
If the restriction $\phi_Q\circ i_e$ of $\phi_Q$ to $A$ is a tracial state, then $\phi$  coincides with the KMS$_\beta$ state $\omega_{\phi_Q\circ i_e}\circ \Phi^\delta$ given by \eqref{def:state-from-induced}.
\end{lemma}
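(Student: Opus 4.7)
The plan is to set $\tau := \phi_Q\circ i_e$ and show that $\phi$ and $\omega_\tau\circ\Phi^\delta$ are two KMS$_\beta$ states of $\NT(X)$ that agree on $i_e(A)$. Once this is established, Lemma~\ref{restrictiontoA}(a) forces $\phi = \omega_\tau\circ\Phi^\delta$, since the restriction map from KMS$_\beta$ states to traces on $i_e(A)$ is injective. The argument therefore reduces to two computational steps: first, producing $\omega_\tau\circ\Phi^\delta$ as a bona fide KMS$_\beta$ state via Theorem~\ref{thm:KMsbeta-from-tracetau}; second, matching its value on $i_e(a)$ against $\phi(i_e(a))$ through the reconstruction formula.

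For the first step I would invoke Theorem~\ref{thm:KMsbeta-from-tracetau} applied to the tracial state $\tau$ of $A$ furnished by the hypothesis. The structural assumptions on $(G,P)$ and $X$ in the present lemma coincide with those of that theorem, so only the convergence condition on $\zeta_N$ needs checking. Because $N(s) = N_s$, one has $\zeta_N(\beta) = \sum_{s\in P} N_s^{1-\beta} = \zeta^0(\beta - 1)$, and the very existence of the state $\phi_Q$ in Lemma~\ref{lemma:reconstruction} already presumes $\zeta^0(\beta - 1) < \infty$. Consequently Theorem~\ref{thm:KMsbeta-from-tracetau} delivers $\omega_\tau\circ\Phi^\delta$ as a KMS$_\beta$ state of $\NT(X)$.

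For the second step I would substitute $T = i_e(a)$ into the reconstruction formula of Lemma~\ref{lemma:reconstruction}(3) and use the Toeplitz relations to simplify
$i_s(\31^s_j)^* i_e(a) i_s(\31^s_j) = i_s(\31^s_j)^* i_s(\varphi_s(a)\31^s_j) = i_e(\langle \31^s_j, \varphi_s(a)\31^s_j\rangle_s)$. Applying $\phi_Q$ then turns every summand into a value of $\tau$, and the reconstruction formula becomes
\begin{equation*}
\phi(i_e(a)) = \frac{1}{\zeta^0(\beta - 1)} \sum_{s\in P} N_s^{-\beta} \sum_{j=0}^{N_s-1} \tau(\langle \31^s_j, \varphi_s(a)\31^s_j\rangle_s),
\end{equation*}
which is exactly formula \eqref{eq:ometau-onA} for $\omega_\tau(i_e(a))$ once the normalisations $\zeta_N(\beta) = \zeta^0(\beta - 1)$ are identified. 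Thus $\phi$ and $\omega_\tau\circ\Phi^\delta$ agree on $i_e(A)$, and Lemma~\ref{restrictiontoA}(a) completes the proof.

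There is no real obstacle here: the content of the argument is essentially arranging the reconstruction formula so that its right-hand side lines up with the explicit expression \eqref{eq:ometau-onA}. The only delicate point is the bookkeeping between the parameters of $\zeta_N$ and $\zeta^0$; the substitution $N(s) = N_s$ together with the shift $\beta \mapsto \beta - 1$ is the precise mechanism by which the reconstruction formula reproduces the induced-state construction on the coefficient algebra, after which appeal to Lemma~\ref{restrictiontoA}(a) concludes matters in a single line.
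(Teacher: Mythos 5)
Your proof is correct, but it takes a genuinely different route from the paper's. The paper proves the equality $\omega_{\phi_Q\circ i_e}(y)=\phi(y)$ directly for every spanning element $y=i^{(r)}(\theta_{\xi,\eta})$ of the core: it decomposes $\31^s_j$ through $X_r\otimes_A X_{r^{-1}s}$, establishes the identity $i_s(i_r^s(\theta_{\xi,\eta})\31^s_j)=y\,i_s(\31^s_j)$, and then feeds the general element $T=y$ into the reconstruction formula of Lemma~\ref{lemma:reconstruction}(3). You instead check agreement only on $i_e(A)$ --- where the substitution $T=i_e(a)$ reduces to the one-line computation $i_s(\31^s_j)^*i_e(a)i_s(\31^s_j)=i_e(\langle\31^s_j,\varphi_s(a)\31^s_j\rangle_s)$ and a comparison with \eqref{eq:ometau-onA} --- and then conclude by the injectivity of the restriction map to $i_e(A)$ recorded in Lemma~\ref{restrictiontoA}(a). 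This requires knowing that $\omega_{\phi_Q\circ i_e}\circ\Phi^\delta$ is itself a KMS$_\beta$ state, which you correctly obtain from Theorem~\ref{thm:KMsbeta-from-tracetau}; the identification $\zeta_N(\beta)=\zeta^0(\beta-1)$ that makes this work is the same one the paper performs at the start of its own proof, and your observation that convergence is already presupposed by the existence of $\phi_Q$ is sound. Your argument is shorter and off-loads the work onto the uniqueness principle \eqref{phixieta}, whereas the paper's computation for general $r$ in effect re-derives that uniqueness in this particular instance, at the cost of extra product-system bookkeeping but without leaning on Lemma~\ref{restrictiontoA}(a). Both arguments are valid.
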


\begin{proof}
Assume $\phi$ is a KMS$_\beta$ state of $\NT(X)$ such that $\phi_Q\circ i_e$ is a tracial state on $A$. Since both $\omega_{\phi_Q\circ i_e}$ and $\phi$ are supported on $\Ff$ by Proposition~\ref{prop:KMS-state-on-F}, it suffices to prove that
$\omega_{\phi_Q\circ i_e}(y)=\phi(y)$ for all $y\in \Ff$.  Since $\zeta_0$ is given by \eqref{eq:inf-prod} (and in particular
$N_s=N(s)$), we have $\zeta_N(\beta)=\zeta^0(\beta-1)$, and the series is convergent in the interval $(\beta_0,\infty)$. Let $y=i^{(r)}(\theta_{\xi,\eta})$ in $\Ff$. We adopt the convention that
$i_r^s:\Ll(X_r)\to \Ll(X_s)$ is the zero map when $s\notin rP$.  Equations
\eqref{eq:sum-of-vector-states} and \eqref{eq:Ind-pi-tau} imply that
\begin{align}
\omega_{\phi_Q\circ i_e}(y)
&=\sum_{s\in P}\frac{N_s^{-\beta}}{\zeta^0(\beta-1)}\sum_{j=0}^{N_s-1}{\phi_Q\circ i_e}(\langle \31^s_j, i_r^s({\theta_{\xi,\eta}})\31^s_j\rangle)\notag\\
&=\sum_{s\in P}\frac{N_s^{-\beta}}{\zeta^0(\beta-1)}\sum_{j=0}^{N_s-1} \phi_Q\bigl(i_s(\31^s_j)^*i_s(i_r^s(\theta_{\xi,\eta})\31^s_j)\bigr).\label{get-phi-from-omega1}
\end{align}
We look separately at elements of the form $i_s(i_r^s(\theta_{\xi,\eta})\31^s_j)$. For each $j$ there is a unique decomposition
$\31^s_j=\31^r_{k}\otimes \31^{r^{-1}s}_l$ with $k\in \{0,\dots, N_r-1\}$ and $l\in \{0,\dots,N_{r^{-1}s}-1\}$. Since $i$
is a representation of the product system, we have that
\begin{align*}
i_s(i_r^s(\theta_{\xi,\eta})\31^s_j)
&=i_s\bigl(F^{r,r^{-1}s}(\xi\otimes_A \varphi_{r^{-1}s}(\langle \eta,\31^r_k\rangle)\31^{r^{-1}s}_l \bigr)\\
&=i_r(\xi)i_{r^{-1}s}(\varphi_{r^{-1}s}(\langle \eta,\31^r_k\rangle)\31^{r^{-1}s}_l)\\
&=i_r(\xi)i_e(\langle \eta,\31^r_k\rangle)i_{r^{-1}s}(\31^{r^{-1}s}_l)\\
&=i_r(\xi)i_r(\eta)^*i_r(\31^r_k)i_{r^{-1}s}(\31^{r^{-1}s}_l)\\
&=yi_s(\31^s_j).
\end{align*}
Inserting this in \eqref{get-phi-from-omega1} implies that
$$
\omega_{\phi_Q\circ i_e}(y)=\sum_{s\in P}\frac{N_s^{-\beta}}{\zeta^0(\beta-1)}\sum_{j=0}^{N_s-1} \phi_Q\bigl(i_s(\31^s_j)^*)yi_s(\31^s_j),
$$
and an application of Lemma~\ref{lemma:reconstruction}(3) therefore gives that $\omega_{\phi_Q\circ i_e}(y)=\phi(y)$, as claimed.
\end{proof}

The following corollary describes conditions guaranteeing surjectivity of the parametrization of KMS states.

\begin{cor}\label{cor:surjectivity-of-parametrization}
Under the hypotheses of Lemma~\ref{lemma:phiQ-trace-onA}, assume moreover
that for every KMS$_\beta$ state $\phi$ of $\NT(X)$, where $\beta>\beta_0$, the restriction
$\phi_Q\circ i_e$ to $A$ is a tracial state. Then the
parametrization of KMS$_\beta$ states for $\beta>\beta_0$ from Theorem~\ref{thm:KMsbeta-from-tracetau} is surjective.

Furthermore, the condition that $\phi_Q\circ i_e$ is a tracial state on $A$ for every KMS$_\beta$ state
$\phi$ of $\NT(X)$ holds in particular in the following cases:
\begin{enumerate}
\item $A$ is commutative.
\item $i_e(A)$ commutes with $\{\alpha_s(1): s\in P\}$.
\end{enumerate}
\end{cor}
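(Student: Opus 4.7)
The first (surjectivity) assertion is essentially immediate from Lemma~\ref{lemma:phiQ-trace-onA}. Indeed, given a KMS$_\beta$ state $\phi$ of $\NT(X)$ with $\beta>\beta_0$, the standing hypothesis gives that $\phi_Q\circ i_e$ is a tracial state on $A$, and the lemma then yields
\[
\phi=\omega_{\phi_Q\circ i_e}\circ\Phi^\delta,
\]
so that $\phi$ lies in the range of the parametrization $\tau\mapsto\omega_\tau\circ\Phi^\delta$ of Theorem~\ref{thm:KMsbeta-from-tracetau}. The substance of the corollary therefore lies in showing that each of conditions (1) and (2) forces $\phi_Q\circ i_e$ to be tracial.

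Case (1) is trivial, since any state on a commutative $C^*$-algebra is automatically a trace. For case (2) the plan is to transfer the computation of $\phi_Q$ on $i_e(A)$ down to the algebra $\Ff$ and then to invoke the fact that $\phi\upharpoonright_{\Ff}$ is a trace (by Proposition~\ref{prop:KMS-state-on-F}, since $\Ff$ is pointwise fixed by $\sigma^N$). The key observation is that
\[
\tilde Q_F:=\prod_{s\in F}(1-\alpha_s(1))
\]
is a well-defined projection in $\Ff$: each $\alpha_s(1)\in\Ff$ by Proposition~\ref{prop:endo-of-F}, and by Proposition~\ref{prop:product-alphas} the $\alpha_s(1)$'s commute, so the factors $1-\alpha_s(1)$ are pairwise commuting projections in $\Ff$. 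Moreover, the orthogonality relation $i_s(\31^s_j)^*i_s(\31^s_k)=\delta_{j,k}$ from Definition~\ref{def:finite-type}(3) shows that $\prod_{j=0}^{N_s-1}(I-\pi_\phi(i_s(\31^s_j)i_s(\31^s_j)^*))=I-\pi_\phi(\alpha_s(1))$, so that $\pi_\phi(\tilde Q_F)=Q_F$.

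Given $a,b\in A$, I would then compute
\[
\phi_Q(i_e(ab))=\zeta^0(\beta-1)\,\tilde\phi\bigl(Q\pi_\phi(i_e(ab))Q\bigr)=\zeta^0(\beta-1)\lim_F\phi\bigl(\tilde Q_F\,i_e(ab)\,\tilde Q_F\bigr),
\]
where the second equality uses $\pi_\phi(\tilde Q_F)=Q_F$, the fact that $\tilde\phi\circ\pi_\phi=\phi$ on $\NT(X)$, and the SOT-convergence $Q_F\searrow Q$ of the decreasing net of commuting projections. The assumption in (2) forces $i_e(ab)$ to commute with $\tilde Q_F$, so $\tilde Q_F\,i_e(ab)\,\tilde Q_F=i_e(ab)\tilde Q_F$. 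Applying the trace property of $\phi\upharpoonright_{\Ff}$ to the pair $x=i_e(a)$ and $y=i_e(b)\tilde Q_F$ (both in $\Ff$) gives
\[
\phi(i_e(ab)\tilde Q_F)=\phi(i_e(b)\tilde Q_F\,i_e(a))=\phi(i_e(ba)\tilde Q_F),
\]
where the second step again uses that $\tilde Q_F$ commutes with $i_e(a)$. Passing to the limit in $F$ and multiplying by $\zeta^0(\beta-1)$ yields $\phi_Q(i_e(ab))=\phi_Q(i_e(ba))$, as required.

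The one point requiring care is the limit step: one needs $Q_F\pi_\phi(i_e(ab))Q_F\to Q\pi_\phi(i_e(ab))Q$ in a topology where the vector state $\tilde\phi$ is continuous. This is handled by remarking that $(Q_F)_{F\in\Lambda}$ is a decreasing net of pairwise commuting projections, hence converges to $Q$ in the strong operator topology; combined with the uniform norm bound $\|Q_F\|\le 1$ and continuity of $\tilde\phi=(\cdot\,\xi_\phi\mid\xi_\phi)$ in the weak operator topology, this gives the desired convergence. After this, the computation above closes cleanly and the corollary follows.
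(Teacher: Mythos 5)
Your proposal is correct and follows essentially the same route as the paper: surjectivity is read off directly from Lemma~\ref{lemma:phiQ-trace-onA}, case (1) is the triviality that states on commutative algebras are traces, and case (2) rests on the identity $Q_F=\prod_{s\in F}(I-\pi_\phi(\alpha_s(1)))$ together with the trace property of $\phi\upharpoonright_{\Ff}$, which is exactly what the paper invokes (albeit in a single sentence where you supply the details of the commutation and limit arguments).
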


\begin{proof}
The first part follows from Lemma~\ref{lemma:phiQ-trace-onA}. The second part is immediate from the definitions
of $Q$ and $\phi_Q$ in Lemma~\ref{lemma:reconstruction}, because $Q_F= \prod_{s\in F} (I-\pi_\phi(\alpha_s(1)))$.
\end{proof}

Now we address the problem of injectivity of the map from tracial states of $A$ to KMS$_\beta$ states
on $\NT(X)$ constructed in Theorem~\ref{thm:KMsbeta-from-tracetau}. We have a positive
answer under additional assumptions on the product system. First recall that a finite subset $F$
of $P$ is $\vee$-closed if $r\vee s\in F$ for all $r,s\in F$. A finite subset $F\subseteq P$
will be called \emph{strictly $\vee$-closed} if $r\vee s\in F$ for some $s\in F$ and
$r\in P$ implies that $r\in F$. Also, we denote by $\Lambda^\vee$  the directed set consisting
of $\vee$-closed finite subsets of $P$ ordered under inclusion.

\begin{theorem}\label{thm:inj-of-parametrization}
Let $(G, P)$ be lattice ordered and let $X$ be an
associative product system of finite type over $P$ such that $\mathfrak{m}_{s,r}$ are bijective and respect co-prime pairs, for all $s,r\in P$. Let  $\zeta_0$  be given by \eqref{eq:inf-prod} and assume that $P$ has no non-trivial minimal elements. For a KMS$_\beta$ state $\phi$ of $\NT(X)$, where $\beta>\beta_0$, let $\phi_Q$ be its associated state constructed in  Lemma~\ref{lemma:reconstruction}.
Assume that $\phi_Q\circ i_e$ is a tracial state on $A$ for every KMS$_\beta$ state with $\beta>\beta_0$.

If there is a subset $A_0$ of $A$ with dense linear span and such that for every
$a\in A_0$ there is a finite strictly $\vee$-closed
subset $F_a$ of $P$ with the property that
\begin{equation}\label{eq:zero-outside-r}
\sum_{j=0}^{N_s-1}\langle \31^s_j, \varphi_s(a)\31^s_j\rangle=0 \text{ for all }s\notin (F_a\cup \{e\}),
\end{equation}
then the parametrization of KMS$_\beta$ states from Theorem~\ref{thm:KMsbeta-from-tracetau} is injective.
\end{theorem}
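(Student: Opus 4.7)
The plan is to exhibit $\phi \mapsto \phi_Q\circ i_e$ as a left inverse of the map $\tau \mapsto \omega_\tau\circ \Phi^\delta$ on the tracial states of $A$. Concretely, I will show that for every tracial state $\tau$ of $A$, the KMS$_\beta$ state $\phi := \omega_\tau \circ \Phi^\delta$ satisfies $\phi_Q \circ i_e = \tau$. Injectivity then follows at once: two tracial states $\tau_1,\tau_2$ of $A$ with $\omega_{\tau_1}\circ \Phi^\delta = \omega_{\tau_2}\circ \Phi^\delta$ would yield $\tau_1 = \phi_Q\circ i_e = \tau_2$. By density of the linear span of $A_0$ in $A$ and continuity of both functionals, it suffices to verify $\phi_Q(i_e(a)) = \tau(a)$ for each $a\in A_0$.

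Fix $a\in A_0$ with associated strictly $\vee$-closed set $F_a\subseteq P$, and set $\eta_{s,a} := \sum_{j=0}^{N_s-1}\langle \31^s_j,\varphi_s(a)\31^s_j\rangle\in A$ for $s\in P$, so that $\eta_{e,a}=a$ and $\eta_{s,a}=0$ for $s\notin F_a\cup\{e\}$ by \eqref{eq:zero-outside-r}. Since $\phi|_\Ff$ is tracial (Proposition~\ref{prop:KMS-state-on-F}) and each $Q_F$ is (the image under $\pi_\phi$ of) a projection in $\Ff$, one has $\phi(Q_Fi_e(a)Q_F)=\phi(Q_Fi_e(a))$. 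Expanding $Q_F=\prod_{s\in F}(1-\alpha_s(1))$ by inclusion--exclusion using Proposition~\ref{prop:product-alphas}, and applying the KMS$_\beta$ condition term by term, gives
\begin{equation*}
\phi(Q_Fi_e(a))=\sum_{E\subseteq F}(-1)^{|E|}N_{\bigvee E}^{-\beta}\phi(i_e(\eta_{\bigvee E,a})),
\end{equation*}
with the convention $\bigvee\emptyset = e$. The pivotal identity is $\eta_{r,\eta_{s,a}}=\eta_{sr,a}$ for all $r,s\in P$, which follows from
\begin{equation*}
\langle\31^r_k,\varphi_r(\langle\31^s_j,\varphi_s(a)\31^s_j\rangle)\31^r_k\rangle = \langle\31^{sr}_{j\cdot k},\varphi_{sr}(a)\31^{sr}_{j\cdot k}\rangle
\end{equation*}
(a consequence of the balanced inner product on $X_s\otimes_A X_r$ together with $F^{s,r}((\varphi_s(a)\31^s_j)\otimes_A\31^r_k)=\varphi_{sr}(a)\31^{sr}_{j\cdot k}$), once bijectivity of $\mathfrak{m}_{s,r}$ is used to re-index the double sum over $(j,k)$ as a single sum over $\{0,\dots,N_{sr}-1\}$.

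Plugging this identity into the defining formula \eqref{eq:ometau-onA} for $\omega_\tau$ produces
\begin{equation*}
N_s^{-\beta}\phi(i_e(\eta_{s,a})) = \frac{1}{\zeta^0(\beta-1)}\sum_{q\geq s}N_q^{-\beta}\tau(\eta_{q,a}),
\end{equation*}
and interchanging summations rewrites the expansion of $\phi(Q_Fi_e(a))$ as
\begin{equation*}
\phi(Q_Fi_e(a))=\frac{1}{\zeta^0(\beta-1)}\sum_{q\in P}N_q^{-\beta}\tau(\eta_{q,a})\sum_{E\subseteq F_{\leq q}}(-1)^{|E|},
\end{equation*}
where $F_{\leq q}:=\{s\in F:s\leq q\}$ and I used $\bigvee E\leq q\iff E\subseteq F_{\leq q}$. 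Taking $F=F_a\setminus\{e\}$, the inner sum vanishes whenever $F_{\leq q}\neq\emptyset$: for $q=e$ one has $F_{\leq e}=\emptyset$ (since $e\notin F$), contributing $\tau(a)$; for $q\in F_a\setminus\{e\}$ one has $q\in F_{\leq q}$, killing the term; and for $q\notin F_a\cup\{e\}$, $\eta_{q,a}=0$ kills it. The same value is obtained for any $F\supseteq F_a\setminus\{e\}$, since extra elements of $F$ only enlarge $F_{\leq q}$ at $q$'s with $\eta_{q,a}=0$. Lemma~\ref{lemma:reconstruction}(2) therefore gives $\phi_Q(i_e(a))=\zeta^0(\beta-1)\cdot\tau(a)/\zeta^0(\beta-1)=\tau(a)$, as claimed.

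The main obstacle is establishing the identity $\eta_{r,\eta_{s,a}}=\eta_{sr,a}$, which requires both associativity of $X$ and bijectivity of $\mathfrak{m}_{s,r}$; once this is in hand, the strict $\vee$-closure of $F_a$ is precisely what is needed to trigger the M\"obius-type cancellation leaving only the term $q=e$.
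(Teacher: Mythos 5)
Your proof is correct, and its overall skeleton is the one the paper uses: fix a tracial state $\tau$, set $\phi=\omega_\tau\circ\Phi^\delta$, and show that $\phi_Q\circ i_e=\tau$ on $A_0$, so that $\tau\mapsto\omega_\tau\circ\Phi^\delta$ has a left inverse. The computational inputs also coincide: the identity $\phi(i_e(a)\alpha_r(1))=\frac{1}{\zeta^0(\beta-1)}\sum_{q\geq r}N_q^{-\beta}\tau(\eta_{q,a})$ is exactly the paper's \eqref{eq:omega-tau-a-r} (you derive it from the semigroup identity $\eta_{r,\eta_{s,a}}=\eta_{sr,a}$, which does hold by the balanced inner product, bijectivity of $\mathfrak{m}_{s,r}$ and the bimodule property of $F^{s,r}$; the paper instead computes directly from the defining formula of $\omega_\tau$ on $B_r$ — these are equivalent), and Proposition~\ref{prop:product-alphas} supplies $\prod_{s\in E}\alpha_s(1)=\alpha_{\bigvee E}(1)$. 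Where you genuinely diverge is in the combinatorial cancellation. The paper decomposes $F_a$ into its minimal elements $F_a'$ and the joins $p_J$, reduces the net to the single set $F_a'$, and then proves the cancellation identity \eqref{eq:why-big-sums-give-zero} by induction on $\vert F_a'\vert$ (Lemma~\ref{lem:big-sums-give-zero}). You instead expand $Q_F$ over all $E\subseteq F$, interchange the (finitely supported, since $\eta_{q,a}=0$ off $F_a\cup\{e\}$) sums, and collapse the coefficient of each $q$ to $\sum_{E\subseteq F_{\leq q}}(-1)^{\vert E\vert}$, which is $1$ or $0$ according to whether $F_{\leq q}$ is empty. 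This is cleaner and entirely avoids the inductive lemma. One small remark: your closing sentence misattributes the role of strict $\vee$-closedness — in your resummation it is never actually used (only finiteness of $F_a$ and cofinality of $\{F:F\supseteq F_a,\,e\notin F\}$ enter), whereas the paper needs it to kill the terms $\omega_\tau(i_e(a)\alpha_{p_J}(1))$ with $p_J\notin F_a$ before restricting to $F_a$. So your argument in fact establishes the conclusion under the weaker hypothesis that $F_a$ is merely finite; that is a feature, not a bug, but the sentence should be corrected.
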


\noindent
{\em Proof.}
Let $\tau$ be a tracial state on $A$. Let $\beta>\beta_0$ and let $\phi:=\omega_\tau\circ\Phi^\delta$ be the associated  KMS$_\beta$ state, as in Theorem~\ref{thm:KMsbeta-from-tracetau}. It suffices to prove that
\begin{equation}\label{eq:tau-back}
\phi_Q\circ i_e(a)=\tau(a) \text{ for all }a\in A_0.
\end{equation}

Fix $a\in A_0$. We write the corresponding strictly $\vee$-closed finite subset $F_a$ of $P$ in the form
\begin{equation}\label{eq:Fa}
F_a:=F_a'\cup\bigcup_{J\subseteq F_a', \vert J\vert\geq 2}\{p_J\},
\end{equation}
where $F_a'=\{p_1,\dots ,p_n: p_i\not\leq p_j \text{ for } i\neq j, 1\leq i,j\leq n\}$, and
$p_J:=\vee_{p\in J}p$ for every $J\subseteq F_a'$ with $\vert J\vert \geq 2$. In particular, $e\notin F_a.$

By the definition of $\phi_Q$ and the fact that $\phi$ restricted to $\Ff$ is the trace $\omega_\tau$,
we have
$$
(\phi_Q\circ i_e)(a)=\zeta^0(\beta-1)\varinjlim_{F,F'\in \Lambda}\omega_\tau\bigl(i_e(a)\prod_{r\in F}(1-\alpha_r(1))\prod_{q\in F'}(1-\alpha_q(1))\bigr).
$$
By Proposition~\ref{prop:product-alphas}, any product $(1-\alpha_r(1))(1-\alpha_q(1))$ can
again be written in the  form $1- (\alpha_r(1)+\alpha_q(1) +
\alpha_{r\vee q}(1))$, so replacing $\Lambda$ with $\Lambda^{\vee}$ we may assume that
$$(\phi_Q\circ i_e)(a)=\zeta^0(\beta-1)\varinjlim_{F\in \Lambda^{\vee}}\omega_\tau(i_e(a)\prod_{r\in F}(1-\alpha_r(1))).$$
Now for every $r\in P\setminus\{e\}$ we have
\begin{align}
\omega_\tau(i_e(a)\alpha_r(1))
&=\sum_{k=0}^{N_r-1}\omega_\tau(\theta_{\varphi_r(a)\31^r_k, \31^r_k})\notag\\
&= \sum_{k=0}^{N_r-1} \sum_{s\geq r}\frac {N_s^{-\beta}}{\zeta^0(\beta-1)}\sum_{j=0, j=k\cdot j'}^{N_s-1}
\tau(\langle (\langle\varphi_{r^{-1}s}(a)\31^r_k, \31^r_k\rangle\31^{r^{-1}s}_{j'}, \31^{r^{-1}s}_{j'}\rangle)\notag\\
&=\sum_{s\geq r}\frac {N_s^{-\beta}}{\zeta^0(\beta-1)}\sum_{k=0}^{N_r-1}\sum_{j=0, j=k\cdot j'}^{N_s-1}
\tau(\langle \31^s_{k\cdot j'}, \varphi_s(a)\31^s_{k\cdot j'}\rangle)\notag\\
&=\sum_{s\geq r}\frac {N_s^{-\beta}}{\zeta^0(\beta-1)} \sum_{j=0}^{N_s-1}
\tau(\langle \31^s_{j}, \varphi_s(a)\31^s_{j}\rangle).\label{eq:omega-tau-a-r}
\end{align}
Since $F_a$ is strictly $\vee$-closed, it follows that $s\notin F_a$ whenever $r\notin F_a$ and $s\geq r$.  Hence \eqref{eq:omega-tau-a-r} and the assumption \eqref{eq:zero-outside-r} imply that $\omega_\tau(i_e(a)\alpha_r(1))=0$ when $r\notin F_a$. The Cauchy-Schwartz inequality implies therefore that for every $F\in \Lambda^\vee$ with $e\notin F$,
\begin{equation}\label{eq:omega-tau-zero}
\omega_\tau\bigg(i_e(a)\prod_{\{r\in F, F\setminus F_a\neq \emptyset\}}\alpha_r(1)\bigg)=0.
\end{equation}
Using induction, it is straightforward to see that for every finite subset $F$ of $P$ we have
$$
\prod_{q\in F}(1-\alpha_q(1))=1+\sum_{\{J\subseteq F, \,J\text{ finite}\}}(-1)^{\vert J\vert }\prod_{p\in J}\alpha_{p}(1).
$$
Combining this with Proposition~\ref{prop:product-alphas} we thus see that
$$
\omega_\tau(i_e(a) \prod_{q\in F}(1-\alpha_q(1)))=\omega_\tau(i_e(a))+\sum_{\{J\subseteq F, \,J\text{ finite}\}}(-1)^{\vert J\vert }\omega_\tau(i_e(a)\alpha_{p_J}(1)).
$$
 for every $F\in \Lambda^\vee$, $e\notin F$.
By \eqref{eq:omega-tau-zero}, only the terms corresponding to $J\subseteq F_a$ can give a non-zero
contribution in
$\omega_\tau(i_e(a) \prod_{q\in F}(1-\alpha_q(1)))$ provided that $F\cap F_a\neq \emptyset$. Since the family $\Lambda^a=\{F\in \Lambda^\vee: F_a\subseteq F\}$ is cofinal in $\Lambda^\vee$, we have
\begin{align*}
(\phi_Q\circ i_e)(a)
&=\zeta^0(\beta-1)\varinjlim_{\Lambda^a}\omega_\tau(i_e(a) \prod_{q\in F}(1-\alpha_q(1)))\\
&=\zeta^0(\beta-1)\bigl(\omega_\tau(i_e(a))+\sum_{\{J\subseteq F_a, \,J\text{ finite}\}}(-1)^{\vert J\vert }\omega_\tau(i_e(a)\alpha_{p_J}(1))\bigr)\\
&=\zeta^0(\beta-1)\omega_\tau(i_e(a) \prod_{q\in F_a}(1-\alpha_q(1))).\\
\end{align*}
Since every element $1-\alpha_q(1)$ with $q\in F_a'$
is dominated by any $1-\alpha_{p_J}(1)$ whenever $J\subset F_a'$ with $q\in J$, we have
$$\prod_{q\in F_a}(1-\alpha_q(1))=\prod_{q\in F_a'}(1-\alpha_q(1)).$$
Thus to prove \eqref{eq:tau-back} it suffices to show that
$$
\omega_\tau(i_e(a))+\sum_{\{J\subseteq F_a', \,J\text{ finite}\}}(-1)^{\vert J\vert }\omega_\tau(i_e(a)\alpha_{p_J}(1))=\frac 1{\zeta^0(\beta -1)}\tau(a).
$$
 Equivalently, using the hypothesis that $a\in A_0$ in the definition of $\omega_\tau(i_e(a))$, we must show that
 \begin{equation}\label{eq:big-sums-give-zero}
 \sum_{s\in F_a} \frac {N_s^{-\beta}}{\zeta^0(\beta-1)}\sum_{j=0}^{N_s-1}
\tau(\langle \31^s_{j}, \varphi_s(a)\31^s_{j}\rangle) + \sum_{\{J\subseteq F_a', \,J\text{ finite}\}}(-1)^{\vert J\vert }\omega_\tau(i_e(a)\alpha_{p_J}(1))=0.
 \end{equation}
To prove this equality we need a lemma.

\begin{lemma}\label{lem:big-sums-give-zero}
Assume the hypotheses of Theorem~\ref{thm:inj-of-parametrization}. Let $\tau$ be a tracial state on $A$, $a\in A_0$ and  $F_a$ be as in \eqref{eq:Fa}. For every $s\in P$ denote $\lambda_s=N_s^{-\beta}\sum_{j=0}^{N_s-1}\tau(\langle \31^s_{j}, \varphi_s(a)\31^s_{j}\rangle)$
and let $n=\vert F_a'\vert$. Then
\begin{equation}\label{eq:why-big-sums-give-zero}
\sum_{k=1}^n\sum_{\{J\subseteq F_a, \vert J\vert=k \}}\lambda_{p_J} + \sum_{k=1}^n\sum_{\{J\subseteq  F_a', \vert J\vert=k \}} (-1)^k\sum_{\{s\in F_a, p_J\leq s\}}\lambda_s=0.
\end{equation}
\end{lemma}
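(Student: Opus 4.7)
\textbf{Plan for the proof of Lemma \ref{lem:big-sums-give-zero}.} The identity \eqref{eq:why-big-sums-give-zero} is purely combinatorial in the finite collection of scalars $\{\lambda_s : s\in F_a\}$, and my plan is to prove it by an inclusion--exclusion argument on the index set $F_a'$. The goal is to switch the order of summation in the second double sum of \eqref{eq:why-big-sums-give-zero}, thereby computing the coefficient of each $\lambda_s$ in the total expression, and then to show that this coefficient vanishes for every $s\in F_a$.

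Concretely, the second double sum rewrites as
\[
\sum_{s\in F_a}\lambda_s\cdot c(s),\qquad c(s):=\sum_{\emptyset\neq J\subseteq F_a',\ p_J\leq s}(-1)^{|J|}.
\]
Since $(G,P)$ is lattice ordered, the inequality $p_J=\bigvee_{j\in J}p_j\leq s$ is equivalent to $p_j\leq s$ for every $j\in J$. Setting $I(s):=\{j\in\{1,\dots,n\}:p_j\leq s\}$, the condition $p_J\leq s$ becomes $J\subseteq I(s)$, and hence $c(s)$ reduces to the standard binomial evaluation
\[
c(s)=\sum_{\emptyset\neq J\subseteq I(s)}(-1)^{|J|}=(1-1)^{|I(s)|}-1=-1,
\]
valid as soon as $|I(s)|\geq 1$. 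The non-emptiness of $I(s)$ for every $s\in F_a$ is automatic from the description \eqref{eq:Fa}: each $s\in F_a$ equals $p_K$ for some non-empty $K\subseteq F_a'$, and then $K\subseteq I(s)$.

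The final task is to identify the first sum in \eqref{eq:why-big-sums-give-zero} with $\sum_{s\in F_a}\lambda_s$; combined with the previous step, this produces $\sum_{s\in F_a}\lambda_s(1+c(s))=0$, which is the desired identity. I expect this last step to be the main obstacle: one must verify that, under the parametrization of $F_a$ by non-empty subsets of $F_a'$ from \eqref{eq:Fa}, each $s\in F_a$ is counted with multiplicity exactly one, i.e.\ that the map $J\mapsto p_J$ on non-empty subsets of $F_a'$ is injective onto $F_a$. Without such injectivity, coincidences $p_{J_1}=p_{J_2}$ with $J_1\neq J_2$ would over-count some $\lambda_s$ in the first sum and break the cancellation. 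This is precisely where the strict $\vee$-closedness of $F_a$ and the hypothesis that $P$ has no non-trivial minimal elements are brought to bear, ensuring the required compatibility between the combinatorial decomposition \eqref{eq:Fa} and the order structure of $P$.
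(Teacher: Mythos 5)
Your argument is correct in its main thrust but follows a genuinely different route from the paper. The paper proves \eqref{eq:why-big-sums-give-zero} by induction on $n=\vert F_a'\vert$: it checks $n=1,2$ by hand, then adjoins one new generator $q$ to $F_a'$ and cancels the $q$-dependent contribution using $\sum_{l=0}^{k}(-1)^{l}\binom{k}{l}=0$. Your direct inclusion--exclusion is shorter and more transparent: interchanging the order of summation in the second double sum is legitimate, the equivalence $p_J\leq s\iff J\subseteq I(s)$ is exactly the least-upper-bound property of the join, $I(s)\neq\emptyset$ because every $s\in F_a$ is some $p_K$ by \eqref{eq:Fa}, and the evaluation $c(s)=(1-1)^{\vert I(s)\vert}-1=-1$ is correct. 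This identifies the second double sum with $-\sum_{s\in F_a}\lambda_s$ in one stroke, with no induction. What the paper's induction buys is that it never has to decide how the first sum is indexed; what your computation buys is an explanation of \emph{why} the identity holds (it is the M\"obius/inclusion--exclusion identity for the $\vee$-semilattice generated by $F_a'$).

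Concerning the step you flag as the main obstacle: you are right to worry, but the resolution is slightly different from what you suggest. If the first sum in \eqref{eq:why-big-sums-give-zero} is read as $\sum_{s\in F_a}\lambda_s$, indexed by the \emph{elements} of $F_a$ each counted once --- which is the form in which the lemma is actually applied in \eqref{eq:big-sums-give-zero}, where the first term is literally a sum over $s\in F_a$ --- then there is nothing left to prove and your argument is already complete. If instead one reads it as indexed by the non-empty subsets $J\subseteq F_a'$ (as the paper's own base case $n=2$ does, listing $\lambda_{p_1}+\lambda_{p_2}+\lambda_{p_1\vee p_2}$), then injectivity of $J\mapsto p_J$ is genuinely required and does \emph{not} follow from $F_a'$ being an antichain alone: in $(\QQ_+^{*},\NN^{\times})$ take $F_a'=\{4,6,9\}$; then $4\vee 9=36=4\vee 6\vee 9$, the coefficient of $\lambda_{36}$ in the subset-indexed first sum is $2$ while your computation gives $-1$ from the second sum, and the two sums add up to $\lambda_{36}$ rather than $0$. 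So the assertion that the hypotheses rescue you needs an actual argument. One is available: strict $\vee$-closedness of $F_a$ forces $F_a\cup\{e\}$ to be downward closed (take $r\leq s$ in the definition), hence for distinct minimal elements $p_i\neq p_j$ of $F_a$ one has $p_i\wedge p_j\in F_a\cup\{e\}$ and $p_i\wedge p_j<p_i$, so $p_i\wedge p_j=e$ by minimality; since the lattice of a lattice ordered group is distributive, $p_{J_1}=p_{J_2}$ with $j\in J_1\setminus J_2$ would give $p_j=p_j\wedge\bigvee_{i\in J_2}p_i=\bigvee_{i\in J_2}(p_j\wedge p_i)=e$, a contradiction. (Note that $\{4,6,9\}$ does not arise this way, since the resulting $F_a$ is not downward closed.) Either supply this argument or, more simply, state and use the lemma in the element-indexed form, which is all that \eqref{eq:big-sums-give-zero} requires.
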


\begin{proof}
We prove \eqref{eq:why-big-sums-give-zero} by induction on $n$. The statement is immediate when $F_a'=\{p_1\}$. If $F_a'=\{p_1, p_2\}$, the first sum in  \eqref{eq:why-big-sums-give-zero} is equal to $\lambda_{p_1} + \lambda_{p_2} + \lambda_{p_1\vee p_2}$, while the second is $-(\lambda_{p_1} + \lambda_{p_1\vee p_2}+\lambda_{p_2}+\lambda_{p_1\vee p_2} ) + \lambda_{p_1\vee p_2}$, so they add up to zero, as claimed.

Assume \eqref{eq:why-big-sums-give-zero} is true when  $F_a'=\{p_1, \dots, p_n\}$ and let $q\in P\setminus \{e\}$ such that
$p_j\not\leq q$ or $q\not\leq p_j$ for all $j=1, \dots ,n$. In order to prove that \eqref{eq:why-big-sums-give-zero} is valid for
$F_a'\cup \{q\}$ it suffices to show that the passage from $F_a'$ to $F_a'\cup \{q\}$ gives zero contribution in the left-hand side. Now, the terms in the left-hand side of \eqref{eq:why-big-sums-give-zero} at $n+1$ which depend on $q$ give the contribution
$$
\sum_{k=1}^{n+1}\sum_{\{J\subseteq F_a', \vert J\vert =k-1\}} \lambda_{p_J\vee q} +
\sum_{k=1}^{n+1} \sum_{\{J\subseteq F_a', \vert J\vert =k-1\}} (-1)^k\sum_{\{J'\subseteq F_a', 0\leq \vert J'\vert\leq k\}} \lambda_{q\vee p_{J\cup J'}}.
$$
Regrouping the terms, this becomes
$$
\sum_{k=1}^{n+1}\sum_{\{J\subseteq F_a', \vert J\vert =k-1\}} \sum_{l=0}^k(-1)^l {k \choose l}\lambda_{p_J\vee q},
$$
which is zero because $\sum_{l=0}^k(-1)^l {k \choose l}=0$. This finishes the proof of the lemma.
\end{proof}

\noindent
{\em End of proof of Theorem~\ref{thm:inj-of-parametrization}.} To see that \eqref{eq:big-sums-give-zero}
follows from Lemma~\ref{lem:big-sums-give-zero}, just note that \eqref{eq:omega-tau-a-r} implies that
$$
\omega_\tau(i_e(a)\alpha_{p_J}(1))=\frac 1{\zeta^0(\beta-1)} \sum_{\{s\in F_a, p_J\leq s\}}\lambda_s.
$$
\hfill$\Box$

\begin{rmk}\label{comparisonwithLR}
\rm{
We claim that condition (2) from Corollary~\ref{cor:surjectivity-of-parametrization} is satisfied for $\Tt(\NN\rtimes\NN^\times)$, viewed as the Nica-Toeplitz algebra $\NT(M_K)$ of a product system $M_K$ over $\NN^\times$ of Hilbert $C^*$-correspondences over $\Tt$, cf. \cite[Theorem 6.6]{BanHLR}. In particular, we recover the surjectivity claim from \cite[Proposition 10.5]{Lar-Rae}.

We let $S$ denote  the generating isometry in $\Tt$ and $s=i_1(S)$ be its image in $\NT(M_K)$.  Let $v_r$ be
the isometry in $\NT(M_K)$ corresponding to the basis element $\31^r_0$ in $(M_K)_r$ for every $r\in \NN^\times$. Recall from \cite{Lac-Rae2} that $v_rs=s^rv_r$ (relation (T1)) and $s^*v_r=s^{r-1}v_rs^*$ (relation (T4)). Hence
$$
ss^*v_r=s^rv_rs^*=v_rss^*
$$
and by taking adjoints also $(ss^*v_r^*)^*=v_rss^*=ss^* v_r=(v_r^*ss^*)^*$. Thus $ss^*$ commutes with $v_r$ and $v_r^*$ for all
$r\in \NN^\times$.

We claim that  the image $i_1(\Tt)$ of $\Tt$ in $\NT(M_K)$ commutes with the projection $\alpha_r(1)$ for all $r\in \NN^\times$. To prove this, notice that $N_r=r$  and $i_r(\31^r_j)=s^jv_r$ for each $r\in \NN^\times$ and all $j=0,\dots, r-1$. The claim  amounts to proving that
\begin{equation}\label{eq:commutation-for-Taff}
s\bigl(\sum_{j=0}^{r-1}s^jv_rv_r^*s^{*j}\bigr)=\bigl(\sum_{j=0}^{r-1}s^jv_rv_r^*s^{*j}\bigr)s.
\end{equation}
The left-hand side of \eqref{eq:commutation-for-Taff} unfolds as follows
%\begin{align*}
$$
s\bigl(\sum_{j=0}^{r-1}s^jv_rv_r^*s^{*j}\bigr)
=sv_rv_r^*+s^2v_rv_r^*s^*+s^3v_rv_r^*s^{*2}+\cdots
+s^{r-1}v_rv_r^*s^{*(r-2)}+s^rv_rv_r^*s^{*(r-1)}.
$$
%\end{align*}
Since $s^*s=1$, the right-hand side of \eqref{eq:commutation-for-Taff} becomes
%\begin{align*}
$$
\bigl(\sum_{j=0}^{r-1}s^jv_rv_r^*s^{*j}\bigr)s
=v_rv_r^*s+sv_rv_r^*+s^2v_rv_r^*s^*+s^3v_rv_r^*s^{*2}+\cdots + s^{r-1}v_rv_r^*s^{*(r-2)}.
$$
%\end{align*}
Comparing the displayed sums, \eqref{eq:commutation-for-Taff} follows if we show that $v_rv_r^*s=s^rv_rv_r^*s^{*(r-1)}$. Using (T1), (T4), and the
fact that $ss^*v_r^*=v_r^*ss^*$, this last equality follows from the calculations
\begin{align*}
s^rv_rv_r^*s^{*(r-1)}
&=v_rs (s^{r-1}v_r)^*=v_rs(s^*v_rs)^*\\
&=v_rss^*v_r^*s=v_rv_r^*ss^*s=v_rv_r^*s.
\end{align*}}

Next we show that the condition \eqref{eq:zero-outside-r} is satisfied in this example, thereby also recovering the injectivity claim from  \cite[Proposition 10.5]{Lar-Rae}. The coefficient algebra $\Tt$ is the closed linear span of monomials of the form $S^mS^{*n}$ 
for $m,n\in \NN$. For $a=S^mS^{*n}$ let $F_a'$ be the set of distinct primes dividing $\vert m-n\vert$. We will show that 
every summand in \eqref{eq:zero-outside-r} vanishes when $r\notin F_a'$. Fix therefore $r$ not dividing $\vert m-n\vert$, $r\neq 1$, and let $0\leq j<r$. Let $V_r$ be the isometry defined in \cite[\S6]{BanHLR}. Then  $$
\langle\31^r_j,\varphi_r(a)\31^r_j\rangle=V_r^*(S^{*j}S^{m}S^{*n}S^j)V_r.
 $$
 Since $S$ is an isometry, the product $S^{*j}S^{m}S^{*n}S^j$ is of the form $S^kS^{*l}$ for $k,l\in \NN$, where $r$ does not divide $\vert k-l\vert$. Suppose  $k\leq l$. Then $V_r^*S^kS^{*l}V_r=V_r^*S^{k}S^{*k}S^{* (l-k)}V_r$. Using relations (T1) and (T4), one can verify using induction that
 $$
 V_r^*S^{k}S^{*k}=SS^*V_r^*
 $$
 for $r\geq k$. Hence the term $V_r^*S^{k}S^{*k}S^{*(l-k)}V_r$ contains a factor $V_r^*S^{*(l-k)}V_r$. But this is zero by relation (T5) in $\Tt(\NN\rtimes\NN^\times)$. The case $k>l$ is similar.
\end{rmk}

\begin{example}\label{brla}
\rm{We borrow from \cite{Bro-Lar} a class of examples of product systems of finite type.
Let $\Gamma$ be a discrete abelian group and $(G, P)$  a lattice ordered group.
Suppose $\alpha$ is an action of $P$ by endomorphisms
of $\Gamma$ such that $\alpha_p$ is injective for all $p\in P$ and the following are satisfied:
(i) $[\Gamma: \alpha_p(\Gamma)]<\infty$
for all $p\in P$; (ii) $\alpha_{p\vee q}(\Gamma)=\alpha_p(\Gamma)\cap \alpha_q(\Gamma)$
for all $p,q\in P$. We let $\hat\alpha$ be the action
of $P$ on $C^*(\Gamma)$ given by $\hat\alpha_p(\delta_\gamma)=\delta_{\alpha_p(\gamma)}$
for $p\in P$ and $\gamma\in \Gamma$. The map on $C^*(\Gamma)$ given by $L_p(\delta_\gamma)=\delta_{\alpha_p^{-1}(\gamma)}$ if $\gamma\in \alpha_p(\Gamma)$ and
$L_p(\delta_\gamma)=0$ if $\gamma\not\in \alpha_p(\Gamma)$ is a transfer operator for $\alpha_p$
for every $p$. Then there is a product system
$X$ over $P$ with the module $X_p$ for $p\in P$ equal as vector space with $C^*(\Gamma)$ and
with actions $f\cdot\xi\cdot g=f\xi\hat\alpha_p(g)$ and inner product $\langle \xi,\eta\rangle=
L_p(\xi^*\eta)$ for $f,\xi,\eta,g\in C^*(\Gamma)$. For each $p\in P$ let $N_p=[\Gamma:\alpha_p(\Gamma)]$
and let $\{\gamma^p_0,\dots,\gamma^p_{N_p-1}\}$ be a set of coset representatives for $\Gamma/\alpha_p(\Gamma)$. If $\31^p_j$ denotes the image of $\gamma^p_j$ in $X_p$,  then
$\{\31^p_j: 0\leq j\leq N_p-1\}$ is an orthonormal basis for $X_p$. If $N_pN_q=N_{pq}$ for all $p,q\in P$,
the product system $X$ is of finite type.

A concrete example of
this setup arises from taking $(G,P)=(\QQ^*_+,\NN^\times)$, $\Gamma=\ZZ^d$ for $d\geq 1$. The resulting product system $X$ over $P$ is of finite type, with $N_r=r$ for all $r\in \NN^\times$. The maps $\mathfrak{m}_{p,q}$ are bijective and respect co-prime pairs. Note that $\zeta^0(\beta)=\sum_{r\in \NN^\times}r^{-(\beta-1)}$ is convergent in the interval $(2,\infty)$. Thus Theorem~\ref{thm:KMsbeta-from-tracetau} applies, and gives KMS$_\beta$ states associated to probability measures on $\TT^d$ for every  $\beta>2$. Since the underlying $C^*$-algebra of the product system is $A=C(\TT^d)$, Corollary~\ref{cor:surjectivity-of-parametrization} applies as well.
}

\end{example}

\begin{rmk}\label{rmk:KMS-smallbeta}
\rm
Note that any KMS$_\beta$ state $\omega_\tau$ given by \eqref{def:state-from-induced}
at $\beta>\beta_c$ satisfies
\begin{equation}\label{eq:smallbeta}
\omega_\tau(i_r(\31^r_n)i_r(\31^r_m)^*)=
\begin{cases}N(r)^{-\beta}&\text{ if }n=m\\
0&\text{ otherwise. }\\
\end{cases}
\end{equation}
Hence $\omega_\tau(\alpha_r(1))=N_rN(r)^{-\beta}$ for all $r$. If $N(r)=N_r$ for all $r\in P$, this condition is $\omega_\tau(\alpha_r(1))=N(r)^{1-\beta}$. Also, the restriction of $\omega_\tau$ to $\Aa$
(see Proposition \ref{prop:product-alphas}) is independent of $\tau$. One can therefore
ask whether  KMS$_\beta$ states can be constructed by other methods, and possibly for a larger
range of $\beta$'s, by starting from states of $\Aa$ or a subalgebra hereof.
It is known that a KMS$_\beta$ state at every $\beta\geq 1$ exists in the case of the product system from  Example~\ref{ex-affine-toeplitz},  as shown in \cite[Proposition 9.1]{Lac-Rae2}. This
 state is supported  on a commutative $C^*$-subalgebra of $\Ff$, and we expect that
similar considerations could work  more generally.

 One would like to  apply \cite[Theorem 4.1]{Lac-Nes2} to the system $(\Ff\rtimes_\alpha P, \sigma)$,
where the dynamics $\sigma$ is trivial on
the image of $\Ff$ in $\Ff\rtimes_\alpha P$ and scales the implementing isometries
$v_s\in \Ff\rtimes_\alpha P$ by $N(s)^{it}$ for $s\in P$, $t\in\RR$. Then for every
$\beta\in \RR$, KMS$_\beta$ states on $\Ff\rtimes_\alpha P$ would be determined
by tracial states $\tau$ on $\Ff$ which satisfy the scaling condition
$\tau\circ \alpha_s=N(s)^{-\beta}\tau$ for every
$s\in P$. Note that for a tracial state $\tau$ on $\Ff$ to satisfy the scaling condition
we must have $\tau(\alpha_s(1)):=N(s)^{-\beta}$ for every $s\in P$. However, this last
equality does not match with $\omega_\tau(\alpha_s(1))=N_sN(s)^{-\beta}$, so states above
$\beta_c$ and states below $\beta_c$ would live on different subalgebras.

 \end{rmk}
%%%%%%%%%%%%%%%%%%%%%%%%%%%%%%%%%%%%%%%%%%%%%%%%%%%
%%%%%%%%%%%%%%%%%%%%%%%%%%%%%%%%%%%%%%%%%%%%%%%%%%%

\end{document}